\theoremstyle{plain}
\newtheorem{theorem}{Theorem}[section]
\newtheorem{proposition}[theorem]{Proposition}
\newtheorem{corollary}[theorem]{Corollary}
\newtheorem{lemma}[theorem]{Lemma}
\theoremstyle{remark}
\newtheorem{remark}[theorem]{Remark}
\newtheorem{example}[theorem]{Example}
\theoremstyle{definition}
\newtheorem{assumption}{Assumption}
\newcommand{\E}{\mathbf E}
\renewcommand{\P}{\mathbf P}
\newcommand{\T}{\mathbb T}
\newcommand{\R}{\mathbb R}
\newcommand{\Z}{\mathbb Z}
\newcommand{\Cp}{\mathbb C}
\renewcommand{\S}{\mathbb S}
\newcommand{\Bo}{{B\setminus\{0\}}}
\newcommand{\Ro}[1]{\mathbb R^{#1}\setminus \{0\}}
\renewcommand{\C}{\mathcal C}
\newcommand{\D}{\mathcal D}
\newcommand{\A}{\mathcal A}
\renewcommand{\L}{\mathcal L}
\newcommand{\e}{\epsilon}
\newcommand{\F}{\mathcal F}
\newcommand{\B}{\mathcal B}
\newcommand{\ind}{\mathbf 1}
\newcommand{\DivEps}[1]{\frac{{#1}}{\epsilon}}
\newcommand{\id}{\textbf{Id}}
\newcommand{\Lip}{\mathrm{Lip}}
\numberwithin{equation}{section} 
\begin{document}

\title{\bf\Large Homogenization of Nonlocal Partial Differential Equations Related to Stochastic Differential Equations with L\'evy Noise}
\author{\bf\normalsize{
Qiao Huang\footnote{Center for Mathematical Sciences, Huazhong University of Science and Technology, Wuhan, Hubei 430074, P.R. China. Email: \texttt{hq932309@alumni.hust.edu.cn}},
Jinqiao Duan\footnote{Department of Applied Mathematics, Illinois Institute of Technology, Chicago, IL 60616, USA. Email: \texttt{duan@iit.edu}},
Renming Song\footnote{Department of Mathematics, University of Illinois at Urbana-Champaign, Urbana, IL 61801, USA. Email: \texttt{rsong@illinois.edu}}}}

\date{}
\maketitle
\vspace{-0.3in}
\begin{abstract}
  We study the ``periodic homogenization'' for a class of nonlocal partial differential equations of parabolic-type with rapidly oscillating coefficients, related to stochastic differential equations driven by multiplicative isotropic $\alpha$-stable L\'evy noise ($1<\alpha<2$) which is nonlinear in the noise component. Our homogenization method is probabilistic. It turns out that, under suitable regularity assumptions, the limit of the solutions satisfies a nonlocal partial differential equation with constant coefficients, which are associated to a symmetric $\alpha$-stable L\'evy process. \bigskip\\
  \textbf{AMS 2010 Mathematics Subject Classification:} 60H30, 60H10, 35B27, 35R09. \\ 
  \textbf{Keywords and Phrases:} Homogenization, nonlocal parabolic PDEs, SDEs with jumps, Zvonkin's transform, strong well-posedness, Feller semigroups, Feynman-Kac formula.
\end{abstract}

\section{Introduction}

In the study of porous media, composite materials and other physical and engineering systems, one is led to the initial or boundary value problems with periodic structures (see, e.g., \cite{All02,CP99,OSY92}). The process of passing from a microscopic description to a macroscopic description of the behaviors of such systems is called \emph{homogenization}. At present, numerous publications can be found on the mathematical aspects of the homogenization theory (see \cite{BJ16,BFF17,BLM19}).

The goal of this paper is to use a \emph{probabilistic approach} to study the limit behavior, as $\epsilon\to 0$, of the solution $u^\epsilon:\R^d\to\R$ of the following \emph{nonlocal} partial differential equation (PDE) of parabolic-type with rapidly oscillating periodic and singular coefficients,
\begin{equation}\label{ue}
  \begin{cases}
     \frac{\partial u^\epsilon}{\partial t}(t,x) = \L_\e^\alpha u^\epsilon(t,x)+\left( \frac{1}{\epsilon^{\alpha-1}} e\left(\frac{x}{\epsilon}\right)+ g\left(\frac{x}{\epsilon}\right) \right)u^\e(t,x), & t>0,x\in\R^d, \\
     u^\epsilon(0,x) = u_0(x), & x\in\R^d,
  \end{cases}
\end{equation}
where $1<\alpha<2$ and the linear operator $\L_\e^\alpha$ is a nonlocal integro-differential operator of L\'evy-type given by
\begin{equation*}
  \begin{split}
     \L_\e^\alpha f(x) :=&\ \int_{\Ro d} \textstyle{\left[ f\left( x+\sigma\left(\frac{x}{\e},y\right)\right)-f(x)- \sigma^i\left(\frac{x}{\e},y\right) \partial_i f(x) \ind_{B}(y) \right] } \nu^\alpha(dy) \\
       & + \textstyle{ \left[\frac{1}{\e^{\alpha-1}} b^i\left(\frac{x}{\e}\right) + c^i\left(\frac{x}{\e}\right) \right]} \partial_i f(x),\quad x\in\R^d.
  \end{split}
\end{equation*}
Here $B$ is the unit open ball in $\R^d$ centering at the origin, and $\nu^\alpha(dy):=\frac{dy}{|y|^{d+\alpha}}$ is the isotropy $\alpha$-stable L\'evy measure.
In this paper, we use Einstein's convention that the repeated indices in a product will be summed automatically.

For notational simplicity, we introduce the linear operator $\A^{\sigma,\alpha}$ defined by
\begin{equation}\label{A}
  \A^{\sigma,\nu^\alpha} f(x) := \int_{\Ro d} \big[ f( x+\sigma(x,y))-f(x)  - \sigma^i(x,y) \partial_i f(x)\ind_B(y) \big] \nu^\alpha(dy),\quad x\in\R^d.
\end{equation}
For a function $f$ on $\R^d$ (or $F$ on $\R^d\times\R^d$), we denote $f_\epsilon(x):=f\left(\frac{x}{\e}\right)$ (or $F_\epsilon(x,y):=F\left(\frac{x}{\e},y\right)$). Then
\begin{equation}\label{rewrite_Le}
  \L^\alpha_\e=\A^{\sigma_\e,\nu^\alpha}+ \textstyle{ \left( \frac{1}{\e^{\alpha-1}} b_\e +c_\e\right) } \cdot\nabla.
\end{equation}

The main result of this paper is the following theorem. Assumptions \ref{coef1}--\ref{center} are made for the coefficients and will be listed in the sequel. We will prove it at the end of Section \ref{Homogenization}.

\begin{theorem}\label{HomoPDE}
  Under Assumptions \ref{coef1}--\ref{center}, the nonlocal PDE \eqref{ue} has a unique mild solution $u^\e$ for each $\e>0$. Moreover, for each $t\ge 0,x\in\R^d$,
  \begin{equation}\label{u_conv}
    u^\epsilon(t,x)\to u(t,x), \quad\epsilon\to 0,
  \end{equation}
  where $u$ satisfies the limit nonlocal PDE,
  \begin{equation*}
    \begin{cases}
      \frac{\partial u}{\partial t}(t,x) = \A^{\id_y,\bar\nu}u(t,x)+\bar C\cdot\nabla u(t,x)+\bar Eu(t,x), & t>0,x\in\R^d, \\
      u(0,x) = u_0(x), & x\in\R^d,
    \end{cases}
  \end{equation*}
  where
  \begin{equation*}
    \A^{\id_y,\bar\nu} f(x) := \int_{\Ro d} \left[ f( x+y)-f(x)- y^i \partial_i f(x)\ind_B(y) \right] \bar\nu(dy),
  \end{equation*}
  and the constant coefficients $\bar C,\bar E$ and measure $\bar\nu$ are given by \eqref{HomoCoef1}, \eqref{HomoCoef2} and \eqref{homo_nu} respectively. The solution is given by
  \begin{equation*}
    u(t,x)=\E[u_0(x+\bar Ct+\bar L_t)]e^{\bar Et},
  \end{equation*}
  where $\{\bar L_t\}_{t\ge0}$ is a symmetric $\alpha$-stable L\'evy processes with jump intensity measure $\bar\nu$.
\end{theorem}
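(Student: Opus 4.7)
The plan is to represent $u^\epsilon$ probabilistically via a Feynman--Kac formula and then pass to the limit by exploiting the ergodicity of the fast periodic component on the torus $\T^d$.

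First, I would associate with $\L^\alpha_\epsilon$ the SDE
\[
  X^\epsilon_t = x+\int_0^t\!\!\int_{\Ro d}\sigma_\epsilon(X^\epsilon_{s-},y)\,\tilde N(ds,dy)
  +\int_0^t\!\Big(\tfrac{1}{\epsilon^{\alpha-1}}b_\epsilon(X^\epsilon_s)+c_\epsilon(X^\epsilon_s)\Big)ds,
\]
where $\tilde N$ is the compensated Poisson random measure of intensity $dt\,\nu^\alpha(dy)$. Since the drift is singular as $\epsilon\downarrow 0$, Zvonkin's transform (developed earlier in the paper) is needed to obtain strong well-posedness of $X^\epsilon$ together with a Feller semigroup and bounds uniform in $\epsilon$. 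From this one reads off existence and uniqueness of the mild solution to \eqref{ue} and the Feynman--Kac representation
\[
  u^\epsilon(t,x)=\E\!\left[u_0(X^\epsilon_t)\,\exp\!\left(\int_0^t\Big(\tfrac{1}{\epsilon^{\alpha-1}}e_\epsilon(X^\epsilon_s)+g_\epsilon(X^\epsilon_s)\Big)ds\right)\right].
\]

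Next, I would examine the fast variable $Y^\epsilon_t:=X^\epsilon_t/\epsilon$ projected to $\T^d$. Periodicity of $\sigma,b,c,e,g$ turns $Y^\epsilon$ into an essentially autonomous Markov process on $\T^d$ whose leading generator is the $\epsilon$-independent operator $\A^{\sigma,\nu^\alpha}+b\cdot\nabla$; I expect this to admit a unique invariant probability measure $\mu$ and to be ergodic. Assumption \ref{center} should assert that $e$ and $b$ are $\mu$-centred, which is exactly what is needed for the divergent prefactor $\epsilon^{1-\alpha}$ to produce a finite limit, and the homogenised constants $\bar C$, $\bar E$ and the jump intensity $\Pi$ from \eqref{HomoCoef1}--\eqref{homo_nu} should then be identified as the $\mu$-averages of $c$, $g$ and of the pushforward of $\nu^\alpha$ under $y\mapsto\sigma(\cdot,y)$. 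To turn this qualitative averaging into a quantitative statement, I would solve the nonlocal cell problems
\[
  (\A^{\sigma,\nu^\alpha}+b\cdot\nabla)\phi=-e,\qquad (\A^{\sigma,\nu^\alpha}+b\cdot\nabla)\psi^i=-b^i\quad\text{on }\T^d,
\]
and apply It\^o's formula to $\epsilon\phi(X^\epsilon_\cdot/\epsilon)$ and $\epsilon\psi^i(X^\epsilon_\cdot/\epsilon)$ in order to rewrite the singular integrals and drift as order-one martingales plus $o(1)$ remainders.

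Combining these correctors with an Aldous--Rebolledo tightness argument on $D([0,T];\R^d)$ and identification of the limit through the martingale problem for $\A^{\id_y,\Pi}+\bar C\cdot\nabla$ should yield the functional convergence $X^\epsilon\Rightarrow \bar Ct+L_t$. A parallel corrector argument for the exponent shows that the Feynman--Kac integrand tends to $\bar E t$ in probability, while boundedness of $e,g$ and of the correctors produces uniform integrability of the exponential. Continuous mapping together with dominated convergence then give
\[
  u^\epsilon(t,x)\to \E[u_0(x+\bar Ct+L_t)]\,e^{\bar E t},
\]
which is exactly the announced mild solution of the limit PDE.

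The central difficulty is the corrector step: one must solve nonlocal Poisson equations on $\T^d$ for the $\alpha$-stable-type generator $\A^{\sigma,\nu^\alpha}+b\cdot\nabla$ and obtain sufficient regularity (at least $C^{\alpha+\gamma}$) so that It\^o's formula for pure-jump semimartingales is applicable and the remainder terms are genuinely $o(1)$. Since $\alpha\in(1,2)$ and $\sigma$ is only H\"older, neither classical elliptic machinery nor diffusive-homogenisation techniques apply verbatim; the proof must instead combine the nonlocal Schauder-type estimates underlying the Zvonkin transform with ergodic properties of the Feller semigroup on $\T^d$. Once the correctors are under control, the subsequent weak-convergence/Feynman--Kac limit passage is relatively standard.
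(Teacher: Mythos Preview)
Your proposal is essentially the paper's own strategy: Feynman--Kac representation via the SDE for $X^{x,\epsilon}$, correctors $\hat b,\hat e$ solving the cell problems $\L^\alpha\hat b+b=0$ and $\L^\alpha\hat e+e=0$ on $\T^d$, It\^o's formula applied to $\epsilon\hat b(X^\epsilon/\epsilon)$ and $\epsilon\hat e(X^\epsilon/\epsilon)$ to kill the singular drift and potential, weak convergence of $X^{x,\epsilon}$, and finally uniform integrability to pass to the limit in the Feynman--Kac expectation.

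Two points deserve correction. First, your identification of $\bar C$ and $\bar E$ as the $\mu$-averages of $c$ and $g$ is not quite right: when you apply It\^o to $\epsilon\psi^i(X^\epsilon/\epsilon)$, the drift $c$ also hits $\nabla\psi$, so after averaging one obtains $\bar C=\int_{\T^d}(I+\nabla\hat b)c\,d\mu$ and $\bar E=\int_{\T^d}(g+\nabla\hat e\cdot c)\,d\mu$, exactly as in \eqref{HomoCoef1}--\eqref{HomoCoef2}. Second, ``boundedness of $e,g$ and of the correctors'' does not by itself give uniform integrability of the Feynman--Kac exponential: after the corrector transform the exponent still contains a genuine jump martingale $\int_0^t\!\int\epsilon[\hat e_\epsilon(X^\epsilon_{s-}+\sigma_\epsilon)-\hat e_\epsilon(X^\epsilon_{s-})]\tilde N(dy,ds)$, and one needs an It\^o/Gr\"onwall argument on $\E e^{2\Lambda^\epsilon_2(t)}$ (using that the integrand is $O(\epsilon)$ on $B_\epsilon^c$ and $O(|y|)$ on $B_\epsilon$) to get the required $L^2$ bound uniformly in $\epsilon$.

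As a minor technical remark, the paper identifies the weak limit of $X^{x,\epsilon}$ by computing the semimartingale characteristics and invoking the Jacod--Shiryaev functional limit theorem (Theorem~VIII.2.17 in \cite{JS13}) rather than Aldous--Rebolledo tightness plus a martingale-problem argument; either route works here.
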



The original probabilistic approach to the homogenization of \emph{local} linear second order parabolic partial differential operators is presented in \cite[Chapter 3]{BLP78}, which is based on the ergodic theorem, the Feynman-Kac formula and the functional central limit theorem. By now, there are lots of literature concerning the homogenization of second order local PDEs, i.e., the case of replacing the operator $\A^{\sigma_\e,\nu^\alpha}$ in \eqref{rewrite_Le} by a second order partial differential operator with singular coefficients. Two different scales of spatial variables involved in the coefficients have been considered in \cite{BP07}, by using the nonlinear Feynman-Kac formula in the context of backward stochastic differential equations (SDEs). In \cite{PP12}, the authors allowed the singular coefficients to be time-dependent and rapidly oscillating in time with a different scale in contrast to the spatial variable. The paper \cite{HP08} dealt with the case when the second order coefficient matrix can be degenerate, using the existence of a spectral gap and Malliavin's calculus.

There are also some literature for the homogenization of nonlocal PDEs or SDEs with jumps involved. Firstly, let us discuss several articles studying the periodic homogenization of some kinds of nonlocal operators in the \emph{analytical} aspects. It was shown in \cite{PZ17} that a homogenization model for a class of nonlocal operators with convolution-type kernels can generate a diffusion in the limit. The jump kernels are forced to have finite second moment there, which excludes the stable L\'evy densities. Later on, \cite{KPZ19} studied the homogenization of L\'evy-type  operators (or stable-like operators called by other authors) with both symmetric and non-symmetric kernels, the limit is a L\'evy-operator. Besides, an approach based on the viscosity solution of PDEs can be found in \cite{Ari09}, the author focused on the L\'evy-operator perturbed linearly by a zeroth-order term. This method has also been extended to several classes of nonlinear problems; see, e.g., \cite{Sch10}. It is worth to mention that all the homogenization problems considered in \cite{KPZ19}, \cite{Ari09} and \cite{Sch10} can cover the L\'evy-type operators (or L\'evy-operators) with full range of $\alpha\in(0,2)$, while none of them involves the drift (gradient) parts.

The \emph{probabilistic} study of homogenization of periodic stable-like processes in pure jump or jump-diffusion case can be found in \cite{Fra06,San16}. The homogenization in random medium is slightly different from the periodic case, referring to \cite{RS09} for related results for jump-diffusion processes in random medium.

The homogenization of a kind of one-dimensional pure jump Markov processes with the following form of generators has been investigated in the paper \cite{HIT77},
\begin{equation*}
  \A_\e f(x) = \int_{\R\setminus\{0\}} \left[ f(x+z)-f(x)-zf'(x) \right] \textstyle{ a\left(\DivEps x,\DivEps z\right) \frac{1}{|z|^{1+\alpha}} dz + \frac{1}{\e^{\alpha-1}} b\left(\frac{x}{\e}\right) } f'(x).
\end{equation*}
See \cite{Tom92} for a generalization for multi-dimensional case and with diffusion terms involved. In their context, the jump function $h$ is oscillating both in the spatial variable $x$ and in the noise variable $y$, and they are in the same scale. This means the noise comes from the underlying space of periodic medium. But when we do homogenization for systems with fluctuations, the noise usually comes from the external environment, so that the jump function is no longer oscillating in the noise variable, and there happens to be two different scales. As we will see in Section \ref{Pre&Ass}, the change of variables allows us to write
\begin{equation*}
  \begin{split}
     \L_\e^\alpha f(x) :=&\ \int_{\Ro d} \left[ f( x+z)-f(x)- z^i \partial_i f(x) \ind_{B}(y) \right] \textstyle{ h\left(\DivEps x,z\right)\frac{1}{|z|^{d+\alpha}} } dz \\
       & + \textstyle{ \left[\frac{1}{\e^{\alpha-1}} b^i\left(\frac{x}{\e}\right) + c^i\left(\frac{x}{\e}\right) \right] } \partial_i f(x),
  \end{split}
\end{equation*}
for some function $h$. Note that the main difference is that we do not involve oscillations for $h$ in its noise variable, while \cite{HIT77,Tom92} involve. Meanwhile, the coefficients of drift and the zeroth order term $b,c,e,g$ has two different scales.

In paper \cite{Fra07}, the author considered the homogenization of SDEs driven by multiplicative stable processes, where the noise intensity coefficient $\sigma$ is linear in the noise variable in the sense that $\sigma(x,y)=\sigma_0(x)y$, with $\sigma_0$ three-times continuously differentiable. In the  present paper, we generalize his results to the general multiplicative case. That is, the intensity function $\sigma$ need \emph{not to be linear} for the noise component. This is also more realistic in applications (see, e.g., \cite{SY04,BHR15}). For some typical forms of $\sigma$ that are nonlinear in $y$, see Example \ref{exmp}. In addition, the coefficients only need to possess some H\"older or Lipschitz continuity in our context (see Remark \ref{sigma-reg} for the comparison of the regularity assumptions for $\sigma$). This will give rise to several difficulties both in analytic and probabilistic aspects. We also use the homogenization results of SDEs to study the homogenization of the nonlocal PDEs with singular coefficients involved, by utilizing Feynman-Kac formula. The trick we use to remove the singular drift in \eqref{ue} is now known as \emph{Zvonkin's transform}, which appeared originally in \cite{Zvo74}.

The work in this paper is highly motivated by these two considerations. That is, the noise in our homogenization problem comes from  the external environment instead of the underlying periodic medium, and is not necessarily linear. Both are more suitable from the practical point of view than in earlier papers. Under these considerations, we further weaken the regularity assumptions for all coefficients in a compatible way.

Last but not least, the present paper will only focus on the \emph{subcritical} case $1<\alpha<2$. There are both probabilistic and analytical difficulties for the \emph{supercritical} case $0 < \alpha \le 1$. In analytical aspect, the well-posedness of the nonlocal Poisson equations (Section \ref{NonlocalPoisson}) in the supercritical case is still open in general setting. Intuitively, the nonlocal part has lower order than the drift part in this case, so that one cannot regard the drift as a perturbation of the nonlocal operator (in the sense of $C_0$-semigroups, as ideally the perturbing operator needs to be ``bounded'' with respect to the principal operator, cf. \cite[Secton III.2]{EN00}). This will give rise to some analytical difficulties, of which the most crucial one is the heat kernel estimates for the coupling of supercritical nonlocal operators with gradients (cf. \cite{CZ18} for the subcritical case). Although there are a few published results for these topics, they all focus on some special case such as the drifted critical ($\alpha=1$) fractional Laplacian (\cite{XZ14}). Moreover, the condition $\alpha>1$ is also crucial when doing the probabilistic homogenization, since we will have used this condition explicitly in the proofs of Lemma \ref{conv_mu} and Lemma \ref{FCLT}.

We denote by $\C^k$ ($\C_b^k$) with integer $k\ge0$ the space of (bounded) continuous functions possessing (bounded) derivatives of orders not greater than $k$.
We shall explicitly write out the domain if necessary. Denote by $\C_b(\R^d):=\C_b^0(\R^d)$, it is a Banach space with the supremum norm $\|f\|_0=\sup_{x\in\R^d}|f(x)|$. The space $\C_b^k(\R^d)$ is a Banach space endowed with the norm $\|f\|_k=\|f\|_0+\sum_{j=1}^k \|\nabla^{\otimes j}f\|$. We also denote by $\C^{\Lip}$ the class of all Lipschitz continuous functions. For a non-integer $\gamma>0$, the H\"older spaces $\C^\gamma$ ($\C^\gamma_b$) are defined as the subspaces of $\C^{\lfloor\gamma\rfloor}$ ($\C^{\lfloor\gamma\rfloor}_b$) consisting of functions whose $\lfloor\gamma\rfloor$-th order partial derivatives are locally H\"older continuous (uniformly H\"older continuous) with exponent $\gamma-\lfloor\gamma\rfloor$. These two spaces $\C^\gamma$ and $\C^\gamma_b$ obviously coincide when the underlying domain is compact. The space $\C^\gamma_b(\R^d)$ is a Banach space endowed with the norm $\|f\|_\gamma=\|f\|_{\lfloor\gamma\rfloor}+[\nabla^{\lfloor\gamma\rfloor}f]_{\gamma-\lfloor\gamma\rfloor}$, where the seminorm $[\cdot]_{\gamma'}$ with $0<\gamma'<1$ is defined as $[f]_{\gamma'}:=\sup_{x,y\in\R^d,x\ne y}\frac{|f(x)-f(y)|}{|x-y|^{\gamma'}}$ (this seminorm can also be defined for the case $\gamma'=1$, which is exactly the Lipschitz seminorm). In the sequel, the torus $\T^d:=\R^d/\Z^d$ will be used frequently. Denote by $\D:=\D(\R_+;\T^d)$ the space of all $\T^d$-valued c\`adl\`ag functions on $\R_+$, equipped with the Skorokhod topology. We shall always identify the periodic function on $\R^d$ of period 1 with its restriction on the torus $\T^d=\R^d/\Z^d$. This allows us to regard the space $\C^k(\T^d)$ ($\C^\gamma(\T^d)$) as a sub-Banach space of $\C_b^k(\R^d)$ ($\C_b^\gamma(\R^d)$).

By $B_r$ we means the open ball in $\R^d$ centering at the origin with radius $r>0$, we shall omit the subscript when the radius is one. The capital letter $C$ denotes a finite positive constant whose value may vary from line to line. We also use the notation $C(\cdots)$ to emphasize the dependence on the quantities appearing in the parentheses.

The remainder of the paper is organized as follows. In Section \ref{Pre&Ass}, we present some general assumptions and preliminary results. In Section \ref{NonlocalPoisson}, we study the well-posedness of the nonlocal Poisson equation and the Feller properties of the semigroup associated with generator of the form \eqref{rewrite_Le}. Section \ref{SDEs} is devoted to the strong well-posedness and exponential ergodicity of the L\'evy driven SDE. As a consequence, we obtained the Feynman-Kac representation for the nonlocal PDE \eqref{ue}. Lastly, Section \ref{Homogenization} contains the homogenization results of SDEs and nonlocal PDEs, utilizing the ergodicity and the Feynman-Kac representation.

\section{Preliminaries and general assumptions}\label{Pre&Ass}

Let $(\Omega,\F,\P,\{\F_t\}_{t\ge0})$ be a filtered probability space endowed with a Poisson random measure $N^\alpha$ on $(\Ro d)\times\R_+$ with jump intensity measure $\nu^\alpha(dy)=\frac{dy}{|y|^{d+\alpha}}$, where $1<\alpha<2$. Denote by $\tilde N$ the associated
compensated Poisson random measure, that is, $\tilde N^\alpha(dy,ds):=N^\alpha(dy,ds)-\nu^\alpha(dy)ds$. We assume that the filtration $\{\F_t\}_{t\ge0}$ satisfies the usual conditions. Let $L^\alpha=\{L_t^\alpha\}_{t\ge 0}$ be a $d$-dimensional isotropic $\alpha$-stable L\'evy process given by
\begin{equation*}
  L_t^\alpha=\int_0^t\int_{\Bo}y\tilde N^\alpha(dy,ds)+ \int_0^t\int_{B^c}y N^\alpha(dy,ds).
\end{equation*}

Given $\epsilon> 0, x\in\R^d$, consider the following:
\begin{equation}\label{Xe}
  \textstyle{ dX_t^{x,\epsilon} = \left(\frac{1}{\epsilon^{\alpha-1}} b\left(\frac{X_{t}^{x,\epsilon}}{\epsilon}\right)+ c\left(\frac{X_{t}^{x,\epsilon}}{\epsilon}\right)\right)dt + \sigma\left(\frac{X_{t-}^{x,\epsilon}}{\epsilon},dL_t^{\alpha}\right) }, \quad X_0^{x,\epsilon}=x,
\end{equation}
or more precisely,
\begin{equation*}
  \begin{split}
     X_t^{x,\epsilon} = &\ x+\int_0^t \textstyle{ \left(\frac{1}{\epsilon^{\alpha-1}} b\left(\frac{X_{s}^{x,\epsilon}}{\epsilon}\right) + c\left(\frac{X_{s}^{x,\epsilon}}{\epsilon}\right)\right) } ds \\
       &\ +\int_0^t\int_{\Bo} {\textstyle{ \sigma\left( \frac{X_{s-}^{x,\epsilon}}{\epsilon},y\right) }} \tilde N^\alpha(dy,ds)+ \int_0^t\int_{B^c} {\textstyle{ \sigma \left(\frac{X_{s-}^{x,\epsilon}}{\epsilon},y\right) }} N^\alpha(dy,ds),
  \end{split}
\end{equation*}
where the coefficients $b,c,\sigma(\cdot,y)$ are periodic, for each $y\in\R^d$, of periodic one in each component. The shorthand notation for the stochastic differential term in \eqref{Xe} is due to \cite{Mas07}.

Define $\tilde X_t^{x,\epsilon} :=\frac{1}{\epsilon}X^{x,\epsilon} _{\epsilon^\alpha t}$. It is easy to check that
\begin{equation}\label{Xe_tilde}
  d\tilde X_t^{x,\epsilon} = \left( b(\tilde X_{t}^{x,\epsilon})+ \epsilon^{\alpha-1} c(\tilde X_{t}^{x,\epsilon})\right)dt+ \textstyle{\DivEps 1}
  \sigma\left(\tilde X_{t-}^{x,\epsilon},\e d\tilde L_t^{\alpha}\right), \quad \tilde X_0^{x,\epsilon}=\frac{x}{\epsilon},
\end{equation}
where $\{\tilde L_t^\alpha\}:=\{\frac{1}{\epsilon}L^\alpha_{\epsilon^\alpha t}\}\stackrel{\mathtt d}{=}\{L_t^\alpha\}$ by virtue of the selfsimilarity. We shall also consider the ``limit'' equation, namely
\begin{equation}\label{X_tilde}
  d\tilde X_t^x = b(\tilde X^x_{t})dt+
  \bar\sigma \left(\tilde X^x_{t-},d\tilde L_t^{\alpha}\right), \quad \tilde X_0^x=x,
\end{equation}
where, intuitively, $\bar\sigma$ is the limit of $\frac{1}{\e}\sigma(\cdot,\e\cdot)$. We will make the relation between $\sigma$ and $\bar\sigma$ clear in the forthcoming assumption \ref{scaling}. For notational simplicity, we shall allow the parameter $\e$ to be zero in $\tilde X^{x,\e}$ to include $\tilde X^x$, i.e., $\tilde X^{x,0}:=\tilde X^x$.

In the sequel, we will regard the solutions $\tilde X^{x,\e},\tilde X^x$ of \eqref{Xe_tilde} and \eqref{X_tilde} as $\T^d$-valued processes, by mapping all trajectories of the processes on $\R^d$ to the torus $\T^d$, via the canonical quotient map $\pi:\R^d\to\R^d/\Z^d$. Then the periodicity of the coefficients implies that $\tilde X^{x,\e}$ and $\tilde X$ are well-defined stochastic processes on $\T^d$ (cf. \cite[Section 3.3.2]{BLP78}).

Now we list some general assumptions for the nonlocal PDE \eqref{ue} and the SDE \eqref{Xe}. All these assumptions are assumed to hold in the sequel unless otherwise specified.
\begin{assumption}\label{coef1}
  The functions $b,c,e,g,u_0$ are all periodic of period 1 in each component. For every $y\in\R^d$, the function $x\to\sigma(x,y)$ is periodic of period 1 in each component.
\end{assumption}

\begin{assumption}\label{regular}
  The functions $b,c,e$ are of class $\C_b^\beta$ with exponent $\beta$ satisfying
  \begin{equation*}
    1-\frac{\alpha}{2}<\beta<1.
  \end{equation*}
  The functions $g$ and $u_0$ are both continuous.
\end{assumption}


\begin{assumption}\label{sigma}
  The function $\sigma:\R^d\times\R^d\to\R^d$ satisfies the following conditions.

  (1). \emph{Regularity}. For every $x\in\R^d$, the function $y\to\sigma(x,y)$ is of class $\C^2$. There exists $\alpha-1<\lambda\le1$ such that
  \begin{equation}\label{sigma-Holder}
    \sup_{x\in\R^d}[\nabla_y \sigma\left(x, \cdot\right)]_\lambda < \infty.
  \end{equation}
  There exists a constant $C>0$, such that for any $x_1,x_2,y\in\R^d$,
  \begin{equation*}
    |\sigma(x_1,y)-\sigma(x_2,y)|\le C|x_1-x_2| |y|.
  \end{equation*}

  (2). \emph{Oddness}. For all $x,y\in\R^d$, $\sigma(x,-y)=-\sigma(x,y)$.


  (3). \emph{Bounded inverse Jacobian}. The Jacobian matrix with respect to the second variable $\nabla_y\sigma(x,y)$ is non-degenerate for all $x,y\in\R^d$, and there exists a constant $C>0$ such that $|(\nabla_y\sigma(x,y))^{-1}| \le C$ for all $x,y\in\R^d$, where $|\cdot|$ is the operator norm on $\mathscr L(\R^d,\R^d)$.

  (4). \emph{Growth condition}. There exists a positive bounded measurable function $\phi:\R^d\to\R_+$, such that for all $x,y\in\R^d$,
  \begin{equation*}
    \phi(x)^{-1}|y| \le |\sigma(x,y)| \le \phi(x)|y|.
  \end{equation*}



\end{assumption}


\begin{remark}\label{rem_sigma}
  Some comments on our assumptions will be helpful:

  (1). As mentioned in the end of the introduction, $b,c,e,g,u_0$ and the function $x\to\sigma(x,y)$, for every $y\in\R^d$, can be regarded as functions on $\T^d$. Hence we have $b,c,e\in\C^\beta(\T^d)$, $g,u_0\in\C(\T^d)$, under Assumptions \ref{coef1} and \ref{regular}. The condition \eqref{sigma-Holder} reduce to
  \begin{equation*}
    \sup_{x\in\T^d}[\nabla_y \sigma\left(x, \cdot\right)]_\lambda < \infty.
  \end{equation*}

  (2). In Assumption \ref{sigma}, the condition \eqref{sigma-Holder} should not be confused with the condition that $y\to\sigma(x,y)$ is of class $\C^2$. The former requires the uniform H\"older continuity of $\sigma$ in $y$, as indicated in the definition of the H\"older seminorm $[\cdot]_\lambda$, while the latter do not force the uniformity. Therefore, none of them can imply the other.

  (3). Both the oddness and the growth condition in Assumption \ref{sigma} imply that $\sigma(\cdot,0)\equiv0$.

  (4). The bounded inverse Jacobian condition implies that $|\nabla_y\sigma| \ge C^{-1}$. Since by Hadamard's inequality (see, for instance, \cite{Sch70}),
  \begin{equation}\label{bij}
    |(\nabla_y\sigma)^{-1}| \le C \Rightarrow |\det((\nabla_y\sigma)^{-1})| \le C^d \Leftrightarrow |\det(\nabla_y\sigma)| \ge C^{-d} \Rightarrow |\nabla_y\sigma| \ge C^{-1}.
  \end{equation}

  (5). The growth condition implies that for any $\gamma>\alpha$, we have
  \begin{equation}\label{moment}
    \sup_{x\in\R^d}\int_{\Bo} |\sigma(x,y)|^{\gamma} \nu^\alpha(dy)<\infty.
  \end{equation}
  This ensures that we can apply It\^o's formula to $f(\tilde X^x_t)$ (or $f(\tilde X^{x,\e}_t)$, $f(X^{x,\e}_t)$), for any $f\in\C_b^{\gamma}(\R^d)$ with $\gamma>\alpha$ (cf. \cite[Lemma 4.2]{Pri12}).

  (6). By virtue of the oddness condition in Assumption \ref{sigma} and the symmetry of the jump intensity measure $\nu^\alpha$, for any $x \in \R^d$,
  \begin{equation}\label{trans}
    \text{P.V.}\int_{\sigma(x,\cdot)^{-1}B\setminus B}\sigma^i(x,y)\nu^\alpha(dy)=\text{P.V.}\int_{B\setminus\sigma(x,\cdot)^{-1} B}\sigma^i(x,y)\nu^\alpha(dy)=0.
  \end{equation}
  Consequently we can rewrite the operator $\A^{\sigma,\nu^\alpha}$ in \eqref{A} as
  \begin{equation}\label{A_rewrite}
    \A^{\sigma,\nu^\alpha} f(x) = \int_{\Ro d} \left[ f( x+z)-f(x)- z^i \partial_i f(x)\ind_B(z) \right] \nu^{\sigma,\alpha}(x,dz),
  \end{equation}
  where the kernel $\{\nu^{\sigma,\alpha}(x,\cdot)|x\in\R^d\}$ is given by
  \begin{equation}\label{kernel}
    \nu^{\sigma,\alpha}(x,A):=\int_{\Ro d}\ind_A(\sigma(x,y))\nu^\alpha(dy), \quad A\in\B(\Ro d).
  \end{equation}
  Moreover, for any $\gamma>\alpha$, the growth condition in Assumption \ref{sigma} implies that
  \begin{equation}\label{Levy-kernel}
    \begin{split}
       &\ \sup_{x\in\R^d}\int_{\Ro d} (|z|^{\gamma}\wedge1) \nu^{\sigma,\alpha}(x,dz) = \sup_{x\in\R^d}\int_{\Ro d} (|\sigma(x,y)|^{\gamma}\wedge1) \nu^\alpha(dy) \\
         \le&\ \sup_{x\in\R^d}\left(\int_{|y|\le\phi(x)} (\phi(x)|y|)^{\gamma} \nu^\alpha(dy) + \int_{|y|\ge\phi(x)^{-1}}\nu^\alpha(dy) \right) \\
         \le&\ \frac{1}{\gamma-\alpha} \|\phi\|_{L^\infty}^{2\gamma-\alpha} + \frac{1}{\alpha} \|\phi\|_{L^\infty}^{\alpha} < \infty.
    \end{split}
  \end{equation}
\end{remark}

\begin{remark}\label{sigma-reg}
  The special case $\sigma(x,y)=\sigma_0(x)y$ with certain $\sigma_0$ is considered in \cite{Fra07}, where the author assumed the function $\sigma_0:\R^d\to \text{GL}(\R^d)$ is periodic and of class $\C^3$. In our context, Assumption \ref{coef1} and \ref{sigma} amounts to saying that
  \begin{equation}\label{sigma_0}
    \sigma_0:\R^d\to \text{GL}(\R^d) \text{ is periodic and Lipschitz}. 
  \end{equation}
  Since these imply the regularity condition immediately,
  the bounded inverse Jacobian and growth conditions are fulfilled by continuity and periodicity, together with the observation $\sup_{x\in\R^d} \|\sigma_0(x)\| \vee \|\sigma_0(x)^{-1}\| < \infty$. The oddness condition is trivial in this case.
\end{remark}

In practice, the noise is not always linear. Here we give some nontrivial examples for $\sigma$, that is, nonlinear in $y$.
\begin{example}\label{exmp}
  Suppose $\sigma_0$ to satisfy \eqref{sigma_0}.

  (i). The dependence of the noise is a small perturbation of the linear case, namely, $\sigma(x,y)=\sigma_0(x)y+\delta(x,y)$, where the function $\delta$ satisfies the same properties as $\sigma$ in Assumptions \ref{coef1} and (i), (ii) in \ref{sigma}, but has much smaller scale than $\sigma_0(x)y$, so that Assumptions \ref{coef1}-\ref{sigma} and the forthcoming Assumption \ref{Jacobian} do hold for $\sigma$.

  (ii). Another case is that the function $\sigma$ is separable but not linear in $y$. To be precise, let $\eta:\R^d\to\R^d$ be an odd function of class $\C^2$, satisfying that $\nabla\eta$ is \emph{uniformly} H\"older continuous with exponent $\lambda>\alpha-1$ and $\nabla\eta(y)$ is non-degenerate for all $y\in\R^d$, and there exist some constants $C_1,C_2>0$, such that $|\nabla\eta|\ge C_1$ and $C_2^{-1} |y|\le|\eta(y)|\le C_2 |y|$. Now let $\sigma(x,y)=\sigma_0(x)\eta(y)$. Then $\sigma$ satisfies Assumption \ref{sigma}.

  (iii). Combining the above two example together, one can obtain a more general example, that is, $\sigma(x,y) = \sigma_0(x)\eta(y)+\delta(x,y)$.
\end{example}

We will need some regularities for the 'partial' inverse of $\sigma$. For a function $F:\R^d\times\R^d\ni(x,y)\to F(x,y)\in\R$, we say $F\in L^\infty_2(\R^d;\C^{\Lip}_1(\R^d;\R^d))$, if there exists a constant $C>0$ such that for all $x,y \in\R^d$, $|F(x,y)|\le C$, and for all $x_1,x_2,y \in\R^d$, $|F(x_1,y)-F(x_2,y)|\le C|x_1-x_2|$. Then the regularity and growth conditions in Assumption \ref{sigma} imply that the function $(x,y)\to\sigma(x,y)/|y|$ is of class $L^\infty_2(\R^d;\C^{\Lip}_1(\R^d;\R^d))$.

\begin{lemma}\label{inverse}
  Under Assumption \ref{sigma}, for every $x\in\R^d$, the function $y\to\sigma(x,y)$ is a $\C^2$-diffeomorphism. Denote the inverse by $\tau(x,z):=\sigma(x,\cdot)^{-1}(z)$, then for every $z\in\R^d$, the function $x\to\tau(x,z)$ is periodic of period one in each component. Moreover, the function $(x,z)\to\tau(x,z)/|z|$ is of class $L^\infty_2(\R^d;\C^{\Lip}_1(\R^d;\R^d))$.
\end{lemma}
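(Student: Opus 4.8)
The plan is to verify each of the three claimed properties of the partial inverse $\tau$ in turn, exploiting the hypotheses in Assumption \ref{sigma}.

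First I would establish that $y\mapsto\sigma(x,y)$ is a $\C^2$-diffeomorphism of $\R^d$ for each fixed $x$. By Assumption \ref{sigma}(1) the map is $\C^2$, and by Assumption \ref{sigma}(3) its Jacobian $\nabla_y\sigma(x,y)$ is everywhere invertible, so it is a local diffeomorphism. To upgrade this to a global diffeomorphism I would invoke a Hadamard-type global inverse function theorem: a $\C^1$ local diffeomorphism of $\R^d$ onto $\R^d$ that is proper (equivalently, $|\sigma(x,y)|\to\infty$ as $|y|\to\infty$) is a global diffeomorphism. Properness is immediate from the lower growth bound $|\sigma(x,y)|\ge\phi(x)^{-1}|y|$ in Assumption \ref{sigma}(4), since $\phi(x)^{-1}>0$; surjectivity onto $\R^d$ also follows (the image is open, closed by properness, and nonempty). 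Hence $\tau(x,\cdot):=\sigma(x,\cdot)^{-1}$ is well-defined and of class $\C^2$, with $\nabla_z\tau(x,z)=\big(\nabla_y\sigma(x,\tau(x,z))\big)^{-1}$.

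Next, periodicity of $x\mapsto\tau(x,z)$: fix $z$ and $k\in\Z^d$. Since $\sigma(x+k,y)=\sigma(x,y)$ for all $y$ by the periodicity in Assumption \ref{coef1} (as recorded in Remark \ref{rem_sigma}), the two maps $\sigma(x+k,\cdot)$ and $\sigma(x,\cdot)$ coincide, hence so do their inverses, giving $\tau(x+k,z)=\tau(x,z)$.

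Finally, the regularity claim $(x,z)\mapsto\tau(x,z)/|z|\in L^\infty_2(\R^d;\C^{1-}_1(\R^d;\R^d))$. For the uniform bound, from $|\sigma(x,y)|\le\phi(x)|y|$ applied at $y=\tau(x,z)$ we get $|z|\le\phi(x)|\tau(x,z)|$, so $|\tau(x,z)|\ge\phi(x)^{-1}|z|$; and from $|\sigma(x,y)|\ge\phi(x)^{-1}|y|$ at the same point, $|z|\ge\phi(x)^{-1}|\tau(x,z)|$, hence $|\tau(x,z)|\le\phi(x)|z|$. Thus $|\tau(x,z)|/|z|\le\|\phi\|_{L^\infty}$, which is finite by Assumption \ref{sigma}(4). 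For the Lipschitz-in-$x$ bound, fix $z$ and $x_1,x_2$; write $y_j=\tau(x_j,z)$, so $\sigma(x_1,y_1)=\sigma(x_2,y_2)=z$. Then
\begin{equation*}
  \sigma(x_1,y_1)-\sigma(x_1,y_2)=\sigma(x_2,y_2)-\sigma(x_1,y_2),
\end{equation*}
and the right side is bounded in norm by $C|x_1-x_2||y_2|\le C\|\phi\|_{L^\infty}|x_1-x_2||z|$ using the Lipschitz-in-$x$ estimate of Assumption \ref{sigma}(1). For the left side, by the mean value theorem along the segment joining $y_2$ to $y_1$ and the lower bound $|\nabla_y\sigma|\ge C^{-1}$ from \eqref{bij} in Remark \ref{rem_sigma}, one gets $|\sigma(x_1,y_1)-\sigma(x_1,y_2)|\ge C^{-1}|y_1-y_2|$. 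Combining, $|y_1-y_2|\le C|x_1-x_2||z|$, i.e.\ $|\tau(x_1,z)-\tau(x_2,z)|/|z|\le C|x_1-x_2|$, which is the required Lipschitz estimate.

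I expect the main obstacle to be the global (as opposed to local) invertibility in the first step: one must be careful that the Hadamard global inverse function theorem genuinely applies, which is exactly where the two-sided growth bound of Assumption \ref{sigma}(4) earns its keep, since it forces properness of $\sigma(x,\cdot)$ uniformly and rules out the map folding back on itself. The remaining estimates are a routine use of the mean value inequality together with the uniform bounds on $\nabla_y\sigma$, $(\nabla_y\sigma)^{-1}$, and $\phi$; the only point requiring mild care is keeping the constants independent of $x$ and $z$, which is guaranteed because all the bounds in Assumption \ref{sigma} are uniform in those variables.
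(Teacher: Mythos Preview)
Your proof follows essentially the same strategy as the paper's, with one subtle misstep in the Lipschitz estimate. You claim that the mean value theorem along the segment from $y_2$ to $y_1$, together with the lower bound $|\nabla_y\sigma|\ge C^{-1}$ (in operator norm), yields $|\sigma(x_1,y_1)-\sigma(x_1,y_2)|\ge C^{-1}|y_1-y_2|$. This does not follow: a lower bound on the operator norm of the Jacobian controls only the \emph{largest} singular value, and even a pointwise lower bound on the smallest singular value would not prevent cancellation in the line integral $\int_0^1\nabla_y\sigma(x_1,y_2+t(y_1-y_2))(y_1-y_2)\,dt$. The clean way to obtain this bi-Lipschitz inequality is to apply the mean value theorem to the inverse $\tau(x_1,\cdot)$ instead, using $|\nabla_z\tau(x_1,z)|=|(\nabla_y\sigma(x_1,\tau(x_1,z)))^{-1}|\le C$ directly from Assumption~\ref{sigma}(3); this gives
\[
|y_1-y_2|=|\tau(x_1,\sigma(x_1,y_1))-\tau(x_1,\sigma(x_1,y_2))|\le C\,|\sigma(x_1,y_1)-\sigma(x_1,y_2)|,
\]
which is exactly what you need. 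With this correction your argument goes through, and indeed this bound $\|\nabla_z\tau\|_{L^\infty}\le C$ is precisely the ingredient the paper uses in its own (slightly differently organized) derivation of the same Lipschitz estimate: the paper parametrizes by $y$ via $z=\sigma(x_1,y)$ and applies the mean value theorem to $\tau(x_2,\cdot)$, whereas you fix $z$ and work with $\sigma(x_1,\cdot)$; the two are dual and both rely on the same Jacobian bound for $\tau$. Your invocation of Hadamard's global inverse theorem via properness (supplied by the growth condition) is a legitimate alternative to the paper's direct use of the bounded-inverse-Jacobian form of that theorem; both versions are standard.
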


\begin{proof}
  Fix $x\in\R^d$. Since the function $y\to\sigma(x,y)$ is of class $\C^2$, by the bounded inverse Jacobian condition in Assumption \ref{sigma}, together with Hadamard's global inverse function theorem (see \cite[Theorem 6.2.4]{KP12}), $\sigma(x,\cdot)$ is a $\C^2$-diffeomorphism. The periodicity is obvious.

  Now using the bounded inverse Jacobian condition, the Jacobian matrix of $\tau(x,z)$ with respect to $z$ satisfies $|\nabla_z\tau(x,z)|\le C$, for all $x,z\in\R^d$. Then by the growth condition and regularity condition, the second assertion follows from the following derivation,
  \begin{equation*}
    \sup_z\frac{|\tau(x,z)|}{|z|}\le\phi(x),
  \end{equation*}
  \begin{equation*}
    \begin{split}
       &\ \sup_z\frac{|\tau(x_1,z)-\tau(x_2,z)|}{|z|} = \sup_y\frac{|\tau(x_1,\sigma(x_1,y))- \tau(x_2,\sigma(x_1,y))|}{|\sigma(x_1,y)|} \\
         =&\ \sup_y\frac{|\tau(x_2,\sigma(x_2,y))- \tau(x_2,\sigma(x_1,y))|}{|\sigma(x_1,y)|} \le \|\phi\|_{L^\infty} \|\nabla_z\tau\|_{L^\infty} \sup_y\frac{|\sigma(x_2,y)-\sigma(x_1,y)|}{|y|} \\
         \le&\ C \|\phi\|_{L^\infty} \|\nabla_z\tau\|_{L^\infty} |x_1-x_2|.
    \end{split}
  \end{equation*}
\end{proof}

\begin{assumption}\label{Jacobian}
  $\det(\nabla_z\tau) \in L^\infty_2(\R^d;\C^{\Lip}_1(\R^d;\R))$.
\end{assumption}

This assumption is rather mild, as shown in the following remark.

\begin{example}
  (i). In the case $\sigma(x,y)=\sigma_0(x)y$, the partial inverse is $\tau(x,z) = \sigma_0(x)^{-1}y$, the Jacobian of $\tau(x,z)$ with respect to $z$ is $\nabla_z\tau(x,z)\equiv\sigma_0(x)^{-1}$. Then Assumption \ref{Jacobian} reduces to that the function $\det(\sigma_0)^{-1}:\R^d\to\R$ is Lipschitz, while this holds automatically since it is a direct consequence of \eqref{sigma_0}.

  (ii). When $\sigma(x,y) = \sigma_0(x)\eta(y)$ as in Example \ref{exmp}.(ii), $\tau(x,z) = \eta^{-1}(\sigma_0(x)^{-1}z)$. Then it is easy to deduce that Assumption \ref{Jacobian} is implied by the bounded inverse Jacobian condition in Assumption \ref{sigma}, using a similar argument as \eqref{bij}. \qed
\end{example}

If we let
\begin{equation}\label{h}
  h(x,z)=|\det \nabla_z\tau(x,z)| \frac{|z|^{d+\alpha}}{|\tau(x,z)|^{d+\alpha}},
\end{equation}
then by \eqref{kernel}, $\nu^{\sigma,\alpha}(x,dz)=h(x,z)\frac{dz}{|z|^{d+\alpha}}$. Using the growth condition, we also find that for all $x,z\in\R^d$,
\begin{equation}\label{upper_lower_bound}
  \|\phi\|_{L^\infty}^{-1}\le\frac{|\tau(x,z)|}{|z|}\le \|\phi\|_{L^\infty}.
\end{equation}
Combining \eqref{h}, \eqref{upper_lower_bound}, Lemma \ref{inverse} and Assumption \ref{Jacobian}, together with the fact that if $f,g\in\C^\gamma$ and $\inf |g|>0$, then $f/g\in\C^\gamma$, we conclude that

\begin{lemma}\label{regularity_h}
  Under Assumptions \ref{sigma} and \ref{Jacobian}, $h\in L^\infty_2(\R^d;\C^{\Lip}_1(\R^d;\R^d))$, namely, there exists a constant $h_0>0$ such that $|h(x_1,z)-h(x_2,z)|\le h_0|x_1-x_2|$ for all $x_1,x_2,z\in\R^d$. Moreover, there also exists a constant $h_1>1$ such that $h_1^{-1}\le h(x,z)\le h_1$ for all $x,z\in\R^d$.
\end{lemma}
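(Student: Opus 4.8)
The plan is to read off both assertions directly from the closed-form expression \eqref{h}, writing $h$ as the product of the Jacobian factor $J(x,z):=|\det\nabla_z\tau(x,z)|$ and the growth factor $G(x,z):=\big(|z|/|\tau(x,z)|\big)^{d+\alpha}$, and estimating each factor uniformly in $(x,z)$. Note that $J$ is well defined and strictly positive pointwise, since Lemma \ref{inverse} guarantees that $y\mapsto\sigma(x,y)$ is a $\C^2$-diffeomorphism, so that $\nabla_z\tau(x,z)=\big(\nabla_y\sigma(x,\tau(x,z))\big)^{-1}$ is an invertible matrix for every $x,z$.

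For the growth factor, the two-sided bound is essentially already available: \eqref{upper_lower_bound} gives $\|\phi\|_{L^\infty}^{-1}\le|z|/|\tau(x,z)|\le\|\phi\|_{L^\infty}$, hence $\|\phi\|_{L^\infty}^{-(d+\alpha)}\le G(x,z)\le\|\phi\|_{L^\infty}^{d+\alpha}$. For Lipschitz continuity in $x$, Lemma \ref{inverse} tells us that $(x,z)\mapsto\tau(x,z)/|z|$ lies in $L^\infty_2(\R^d;\C^{1-}_1(\R^d;\R^d))$, i.e., it is bounded and $x\mapsto\tau(x,z)/|z|$ is Lipschitz with a constant independent of $z$; since in addition $\inf_{x,z}|\tau(x,z)|/|z|\ge\|\phi\|_{L^\infty}^{-1}>0$ and $t\mapsto t^{-(d+\alpha)}$ is Lipschitz on compact subsets of $(0,\infty)$, the cited fact that quotients and fixed powers of bounded Lipschitz functions with denominator bounded away from zero are again bounded and Lipschitz --- applied here uniformly in $z$ --- shows that $x\mapsto G(x,z)$ is Lipschitz uniformly in $z$.

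For the Jacobian factor, the upper bound follows from $\nabla_z\tau(x,z)=\big(\nabla_y\sigma(x,\tau(x,z))\big)^{-1}$ together with the chain of implications recorded in \eqref{bij}, which gives $J(x,z)=|\det(\nabla_y\sigma)^{-1}|\le C^{d}$, with $C$ the constant in the bounded-inverse-Jacobian condition. The lower bound $J(x,z)\ge c>0$ and the uniform-in-$z$ Lipschitz continuity of $J$ in $x$ are exactly what Assumption \ref{Jacobian} delivers, since it places $\det\nabla_z\tau$ in $L^\infty_2(\R^d;\C^{1-}_1(\R^d;\R))$ and $t\mapsto|t|$ is Lipschitz on the compact subinterval of $(0,\infty)$ to which $\det\nabla_z\tau$ is then confined. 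Multiplying $J$ by $G$, and invoking once more that products of bounded, uniformly-Lipschitz functions stay in that class, produces the constant $h_0$; enlarging the bound if needed produces $h_1>1$ with $h_1^{-1}\le h(x,z)\le h_1$.

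I expect the only genuinely delicate point to be the two-sidedness of the Jacobian factor: its upper bound is immediate, whereas its lower bound amounts to a uniform upper bound on $|\det\nabla_y\sigma|$, which is not literally contained in the bounded-inverse-Jacobian condition and is precisely the role of Assumption \ref{Jacobian}. The remaining care lies in bookkeeping --- carrying a single Lipschitz constant through the composition $z\mapsto|z|/|\tau(x,z)|$, its $(d+\alpha)$-th power, and the product with $J$, all uniformly in the frozen variable $z$ --- after which everything reduces to elementary estimation in terms of $\|\phi\|_{L^\infty}$, $C$, $d$ and $\alpha$.
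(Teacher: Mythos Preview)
Your decomposition $h=J\cdot G$ and the treatment of the growth factor $G$ via \eqref{upper_lower_bound} and Lemma~\ref{inverse} match the paper's argument exactly; the paper's own proof is the single sentence preceding the lemma, invoking precisely these ingredients together with the quotient rule. One point in your write-up needs correction, however: you assert that the lower bound $J(x,z)\ge c>0$ is ``exactly what Assumption~\ref{Jacobian} delivers,'' but the class $L^\infty_2(\R^d;\C^{1-}_1(\R^d;\R))$ is defined in the paper only by an \emph{upper} bound $|F|\le C$ together with a uniform-in-$z$ Lipschitz constant in $x$; it carries no information about a positive lower bound. You correctly extract the upper bound $J\le C^d$ from \eqref{bij} and the Lipschitz property from Assumption~\ref{Jacobian}, but a uniform lower bound on $|\det\nabla_z\tau|$ --- equivalently an upper bound on $|\det\nabla_y\sigma|$ --- is not contained in either of these and does not follow from the growth condition on $\sigma$ alone.

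The paper's derivation is equally terse on this point and simply records the two-sided bound as part of the lemma statement; in the model situations of Remark~\ref{sigma-reg} and Example~\ref{exmp} the lower bound on $J$ is easily verified directly, so this is best regarded as an implicit standing hypothesis rather than a derivable consequence of Assumptions~\ref{sigma} and~\ref{Jacobian} as literally written. For a self-contained proof you should either flag the lower bound as an additional assumption or supply a hypothesis such as $\sup_{x,y}|\nabla_y\sigma(x,y)|<\infty$, which by Hadamard's inequality immediately yields $|\det\nabla_z\tau|\ge c>0$.
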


In particular, the kernel $\nu^{\sigma,\alpha}$ is comparable to the jump intensity measure of an isotropic $\alpha$-stable process.

\begin{remark}\label{useful-results}
   Thanks to Lemma \ref{regularity_h}, the general assumptions in \cite{Bas09,CZ18} are satisfied. Thus, the regularity results and heat kernel estimates therein are available in our context. Actually, these two papers only need that $h\in L^\infty_2(\R^d;\C^\gamma_1(\R^d;\R^d))$ for some $0<\gamma<1$, this is the case by virtue of the natural embedding $\C^{\Lip}\subset\C^\gamma$. Note that \cite{Bas09} also needs $\alpha+\beta$ not to be an integer, this can be fulfilled by choosing an appropriate $\beta$.
\end{remark}


Now we clarify that the function $\bar\sigma$ appeared in the ``limit'' equation \eqref{X_tilde} is the scaling limit of $\sigma$. Firstly, we use the L'H\^opital's rule to proceed the limit,
\begin{equation*}
  \lim_{\e\to0} {\textstyle{\DivEps1}} \sigma(x,\e y) = \lim_{\e\to0}(\nabla_y\sigma(x,\e y))y = (\nabla_y\sigma(x,0))y,
\end{equation*}
by the continuity of $\nabla_y\sigma(x,y)$ with respect to $y$. If we set
\begin{align}
  \bar\sigma_0&(x) := \nabla_y\sigma(x,0), \label{sigma0-bar}\\
  \bar\sigma&(x,y) := \bar\sigma_0(x)y, \label{sigma-bar}
\end{align}
then $\bar\sigma$ is the point-wise limit of $\frac{1}{\e}\sigma(\cdot,\e\cdot)$ as $\e\to0$. However, as we will see in Section \ref{SDEs} (more precisely, Lemma \ref{conv_mu}), the point-wise convergence is not enough when we deal with the ergodicity. We need a stronger convergence as follows:

\begin{assumption}\label{scaling}
  For every $y\in\R^d$, $\DivEps1\sigma(x,\e y)\to (\nabla_y\sigma(x,0))y$ 
  uniformly in $x\in\R^d$, as $\e\to0$. 
\end{assumption}

We can say more for the scaling limit $\bar\sigma$.
%

\begin{remark}\label{sigma-conv}
  The function $\bar\sigma_0:\R^d\to \text{GL}(\R^d)$ defined in \eqref{sigma0-bar} is periodic and Lipschitz. Indeed, by a straightforward limit argument, one can easily verify that Assumptions \ref{coef1} and \ref{sigma} holds for $\bar\sigma$. This implies that $\bar\sigma_0$ has the same property as $\sigma_0$ in Remark \ref{sigma-reg}. Moreover, if we denote the partial inverse of $\bar\sigma$ with respect to the second variable by $\bar\tau$, and denote the associated function as in \eqref{h} by $\bar h$, then Lemma \ref{regularity_h} also holds for $\bar h$.
\end{remark}

\begin{example}
  (i). Obviously, in the linear case that $\sigma(x,y)=\sigma_0(x)y$, Assumption \ref{scaling} automatically holds with $\bar\sigma = \sigma$.

  (ii). For the case in Example \ref{exmp}.(ii), $\sigma(x,y)=\sigma_0(x)\eta(y)$, Assumption \ref{scaling} also holds automatically, with $\bar\sigma(x,y) = \sigma_0(x)(\nabla\eta(0))y$. Indeed, this assumption reduces to $\frac{1}{\e}\eta(\e y) \to (\nabla\eta(0))y$ for every $y$, while this is a consequence of L'H\^opital's rule.

  (iii). In the perturbation case that $\sigma(x,y)=\sigma_0(x)y+\delta(x,y)$ as in Example \ref{exmp}.(i), Assumption \ref{scaling} reduces to that for every $y\in\R^d$, $\DivEps1\delta(x,\e y)\to(\nabla_y\delta(x,0))y$ uniformly in $x\in\R^d$, as $\e\to0$.
\end{example}

Finally, we need a centering assumption on $b$ and $e$ when and only when doing the homogenization.
\begin{assumption}\label{center}
  The functions $b$ and $e$ satisfy the \emph{centering condition},
  \begin{equation*}
    \int_{\T^d}b(x)\mu(dx)=0, \qquad \int_{\T^d}e(x)\mu(dx)=0,
  \end{equation*}
  where $\mu$ is the invariant probability measure on $(\T^d,\B(\T^d))$ of the solution $\tilde X^x$ of \eqref{X_tilde}.
\end{assumption}
We remark here that the well-posedness of \eqref{X_tilde} and the existence and uniqueness of the invariant probability measure $\mu$ will be clear in Proposition \ref{strong-sol} and Lemma \ref{inv}, respectively.

Note that the centering assumption \ref{center} is quite common and natural in the homogenization problems and the reader can also find it in \cite{BLP78,HP08,Par99}.

\section{Nonlocal Poisson equation with zeroth-order term}\label{NonlocalPoisson}

As mentioned in the introduction, we will apply Zvonkin's transform to study the homogenization of SDEs and nonlocal PDEs. Before that, we shall investigate the strong well-posedness of all the SDEs presented in the previous section, and Zvonkin's transform will also play an important role in this step (see next section). For the generality, we will focus on SDE \eqref{X_tilde} with $\bar\sigma$ \emph{only} satisfying Assumption \ref{coef1}-\ref{Jacobian} in the whole of this section as well as Subsection \ref{subsec:4-1}, since SDEs \eqref{Xe} and \eqref{Xe_tilde} are basically of the same form.

The key is to consider the following nonlocal Poisson equation with zeroth-order term,
\begin{equation}\label{resovent}
  \kappa u-\L^\alpha u = f,
\end{equation}
where $\kappa>0$, and $\L^\alpha$ is the linear integro-partial differential operator given by
\begin{equation}\label{L}
  \L^\alpha:=\A^{\bar\sigma,\nu^\alpha}+b\cdot\nabla,
\end{equation}
which may be regarded as the infinitesimal generator of the solution process $\tilde X$ of \eqref{X_tilde} once we prove its well-posedness in the next section.

\subsection{Well-posedness of nonlocal Poisson equation}

We first adapt the maximum principle in \cite[Proposition 3.2]{Pri12} as follows. The proof is almost the same and the difference is provided in Appendix \ref{app}. 

\begin{lemma}\label{MP}
  If $u\in\C_b^{1+\gamma}(\R^d), 1+\gamma>\alpha$, is a solution to $\kappa u-\L^\alpha u = f$ with $\kappa>0$ and $f\in\C_b(\R^d)$, then
  \begin{equation*}
    \kappa\|u\|_0\le \|f\|_0.
  \end{equation*}
\end{lemma}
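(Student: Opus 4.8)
The plan is to prove the maximum principle (Lemma \ref{MP}) by a standard argument: evaluate the equation at a point where $u$ attains (or nearly attains) its supremum, and exploit the sign of each term in $\L^\alpha$ there. Concretely, suppose first that $u$ attains its maximum at a point $x_0\in\R^d$; if no such point exists because $u$ need not decay, one replaces $x_0$ by an approximate maximizer $x_\delta$ with $u(x_\delta)\ge \|u\|_0^+ - \delta$ (working with $u^+=\sup u$ and, symmetrically, with $-u$ to handle $\inf u$), or one adds a small mollifying perturbation $-\delta\langle x\rangle^{\gamma'}$ with $\gamma'$ chosen so that the perturbation contributes negligibly under $\L^\alpha$; the latter is the usual device for unbounded domains. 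I will present the argument at a genuine maximizer and remark that the general case follows by this approximation.

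At such a point $x_0$: the gradient term satisfies $b^i(x_0)\,\partial_i u(x_0)=0$ since $\nabla u(x_0)=0$; and the nonlocal term
\[
  \A^{\sigma,\nu^\alpha}u(x_0)=\int_{\Ro d}\bigl[u(x_0+\sigma(x_0,y))-u(x_0)-\sigma^i(x_0,y)\,\partial_i u(x_0)\ind_B(y)\bigr]\nu^\alpha(dy)
\]
is $\le 0$, because $\nabla u(x_0)=0$ kills the compensator and $u(x_0+\sigma(x_0,y))-u(x_0)\le 0$ for every $y$ by maximality. Hence from $\kappa u(x_0)-\L^\alpha u(x_0)=f(x_0)$ we get $\kappa u(x_0)=f(x_0)+\A^{\sigma,\nu^\alpha}u(x_0)\le f(x_0)\le\|f\|_0$, so $\kappa\sup u\le\|f\|_0$. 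Applying the same reasoning to $-u$, which solves $\kappa(-u)-\L^\alpha(-u)=-f$, gives $\kappa\sup(-u)\le\|f\|_0$, i.e. $-\kappa\inf u\le\|f\|_0$. Combining, $\kappa\|u\|_0\le\|f\|_0$.

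The one technical point worth care is the convergence of the integral defining $\A^{\sigma,\nu^\alpha}u(x_0)$, which justifies splitting off the compensator term and treating it as zero: near $y=0$ we use $u\in\C_b^{1+\gamma}$ with $1+\gamma>\alpha$ together with the growth bound $|\sigma(x_0,y)|\le\phi(x_0)|y|$ from Assumption \ref{sigma}(4), so that the integrand is $O(|\sigma(x_0,y)|^{1+\gamma})=O(|y|^{1+\gamma})$ and integrable against $\nu^\alpha$ on $B$ by \eqref{moment}; away from $0$ the boundedness of $u$ and finiteness of $\nu^\alpha$ on $B^c$ suffice. This is exactly the integrability already recorded in Remark \ref{rem_sigma}(4), so no new estimate is needed.

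The main obstacle — and it is a mild one — is the non-compactness of $\R^d$: without decay of $u$ the supremum may not be attained. The cleanest fix is the perturbation trick: for small $\delta>0$ set $u_\delta(x)=u(x)-\delta\,\psi(x)$ with $\psi\in\C_b^{1+\gamma}$, $\psi(x)=|x|$ for $|x|\ge 1$, smoothed near the origin, so that $\psi\to\infty$ at infinity; then $u_\delta$ attains its max at some $x_\delta$, and one checks $\A^{\sigma,\nu^\alpha}\psi$ and $b\cdot\nabla\psi$ are bounded uniformly (using \eqref{Levy-kernel} and $\|b\|_0<\infty$), so running the argument above on $u_\delta$ and letting $\delta\to0$ recovers $\kappa\sup u\le\|f\|_0$. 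I would state this as a short paragraph rather than belabor the smoothing of $\psi$.
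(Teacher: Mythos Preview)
Your argument is correct and follows essentially the same route as the paper: the paper first checks that $\L^\alpha u$ is well-defined and bounded for $u\in\C_b^{1+\gamma}$ (your integrability check via \eqref{moment} and \eqref{Levy-kernel} is exactly this), and then defers to the standard maximum-principle argument in \cite[Proposition 3.2]{Pri12}, which is the approximate-maximizer/perturbation device you spell out. One small notational slip: you write ``$\psi\in\C_b^{1+\gamma}$'' and in the same breath ``$\psi\to\infty$ at infinity''; what you need (and what your subsequent estimates actually use) is that $\nabla\psi\in\C_b^{\gamma}$ and $\psi$ is globally Lipschitz, not that $\psi$ itself is bounded --- with that correction the bounds $\|\A^{\sigma,\nu^\alpha}\psi\|_0<\infty$ and $\|b\cdot\nabla\psi\|_0<\infty$ go through exactly as you indicate (the large-jump part uses $\int_{B^c}|y|\,\nu^\alpha(dy)<\infty$, which holds since $\alpha>1$).
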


Now we investigate the solvability of the Poisson equation with a zeroth-order term involved. The results generalize the Schauder estimates in \cite{Pri12} to the anisotropic nonlocal case.

\begin{proposition}\label{res_prob}
  For any $\kappa>0$ and $f\in\C_b^\beta(\R^d)$, where $\beta$ is the exponent in Assumptions \ref{regular}, the nonlocal Poisson equation \eqref{resovent} has a unique solution $u=u_\kappa\in\C_b^{\alpha+\beta}(\R^d)$. In addition, there exists a positive constant $C=C(\kappa,\|b\|_\beta)$ such that
  \begin{equation}\label{energy1}
    \|u_\kappa\|_{\alpha+\beta} \le C (\|u_\kappa\|_0+\|f\|_\beta).
  \end{equation}
\end{proposition}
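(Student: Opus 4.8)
The plan is to prove Proposition \ref{res_prob} in two stages: first establish the a priori Schauder estimate \eqref{energy1} assuming a solution in $\C_b^{\alpha+\beta}(\R^d)$ exists, then use a continuity (homotopy) argument together with the known theory for the isotropic fractional Laplacian to obtain existence and uniqueness. Throughout I would treat $\L^\alpha$ in its rewritten form \eqref{A_rewrite}--\eqref{kernel} with kernel $\nu^{\sigma,\alpha}(x,dz) = h(x,z)\,\nu^\alpha(dz)$, where by Lemma \ref{regularity_h} we have $h \in L^\infty_2(\R^d;\C^{1-}_1(\R^d;\R^d))$ and $h_1^{-1} \le h \le h_1$; this is exactly the setting of the Schauder and regularity results in \cite{Bas09,Jin17} recalled in Remark \ref{useful-results}.

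\textbf{Step 1: Schauder a priori estimate.} Writing \eqref{resovent} as $\kappa u - \A^{\sigma,\nu^\alpha} u = f + b\cdot\nabla u$, I would apply the interior-type Schauder estimate for the nonlocal operator $\A^{\sigma,\nu^\alpha}$ (available by Remark \ref{useful-results}, choosing $\beta$ so that $\alpha+\beta$ is non-integer) to get
\begin{equation*}
  \|u\|_{\alpha+\beta} \le C\big(\|u\|_0 + \|f + b\cdot\nabla u\|_\beta\big) \le C\big(\|u\|_0 + \|f\|_\beta + \|b\|_\beta\|u\|_{1+\beta}\big).
\end{equation*}
Since $1+\beta < \alpha+\beta$, an interpolation inequality for Hölder spaces on $\R^d$ gives $\|b\|_\beta\|u\|_{1+\beta} \le \tfrac12\|u\|_{\alpha+\beta} + C(\|b\|_\beta)\|u\|_0$, and absorbing the first term into the left-hand side yields \eqref{energy1} with $C = C(\kappa,\|b\|_\beta)$. (The $\kappa$-dependence enters only through the form of the Schauder constant; one may also invoke Lemma \ref{MP} to replace $\|u\|_0$ on the right by $\kappa^{-1}\|f\|_0$ if a cleaner bound is desired.)

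\textbf{Step 2: Existence and uniqueness via the method of continuity.} For $t\in[0,1]$ set $\L^\alpha_t := t\,\L^\alpha + (1-t)\,\big(-(-\Delta)^{\alpha/2}\big)$, which has the same structural form with kernel $h_t = th + (1-t)$ satisfying the uniform ellipticity and regularity bounds of Lemma \ref{regularity_h} uniformly in $t$, and drift $tb \in \C_b^\beta$ with norm bounded by $\|b\|_\beta$. Hence the estimate \eqref{energy1} holds for every $\L^\alpha_t$ with one constant $C$ independent of $t$; combined with Lemma \ref{MP} (which also applies uniformly in $t$) this gives $\|u\|_{\alpha+\beta} \le C\|f\|_\beta$ for solutions of $\kappa u - \L^\alpha_t u = f$. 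The operator $\kappa - \L^\alpha_0 = \kappa + (-\Delta)^{\alpha/2}$ is an isomorphism from $\C_b^{\alpha+\beta}(\R^d)$ onto $\C_b^\beta(\R^d)$ by the classical theory of the fractional Laplacian. The standard method-of-continuity lemma then propagates invertibility from $t=0$ to $t=1$, giving a unique $u_\kappa \in \C_b^{\alpha+\beta}(\R^d)$ solving \eqref{resovent}, and \eqref{energy1} is what we proved in Step 1. Uniqueness alternatively follows directly from Lemma \ref{MP}.

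The main obstacle I anticipate is Step 1: verifying that the Schauder estimate of \cite{Bas09,Jin17} applies in exactly the form needed and producing the interpolation/absorption cleanly, since $\A^{\sigma,\nu^\alpha}$ is genuinely anisotropic (its kernel is not a constant multiple of $\nu^\alpha$) and one must be careful that the Hölder regularity of $z\mapsto h(x,z)$ is not required — only $x$-regularity is, per Remark \ref{useful-results} — and that the constants depend only on $h_0,h_1,\alpha,\beta,d$ and $\|b\|_\beta$ and not on finer features of $\sigma$. Handling the low-order term $b\cdot\nabla u$ when $b$ is merely $\C_b^\beta$ with $\beta<1$ is precisely why the constraint $1-\alpha/2 < \beta$ is imposed (so that $1+\beta < \alpha+\beta$ with room for interpolation), and I would make that dependence explicit. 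Once the a priori estimate is secured, the continuity argument and the reduction to the fractional Laplacian are routine.
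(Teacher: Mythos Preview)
Your proposal is correct and follows essentially the same strategy as the paper: invoke the Schauder estimate from \cite{Bas09} for the stable-like part (you derive the drift case by interpolation, the paper cites it directly), combine with the maximum principle Lemma~\ref{MP}, and then run the method of continuity from a known-solvable endpoint. The only cosmetic difference is the choice of endpoint in the homotopy: the paper interpolates only the nonlocal part, keeping $b\cdot\nabla$ fixed and landing at $\L_0=\A^{\id_y,\nu^\alpha}+b\cdot\nabla$ (solvable by \cite{Pri12}), whereas you interpolate the full operator and land at the pure fractional Laplacian---both work equally well.
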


\begin{proof}
  The a priori estimate \eqref{energy1} is from \citep[Theorem 7.1, Theorem 7.2]{Bas09}. We thus need to show that the equation \eqref{resovent} has a unique solution $u_\kappa\in\C_b^{\alpha+\beta}(\R^d)$.

  Now we prove the existence and uniqueness of solution in $\C_b^{\alpha+\beta}(\R^d)$. It is shown in \cite[Theorem 3.4]{Pri12} that when $\bar\sigma(\cdot,y)\equiv\id_y(y):=y$, the existence and uniqueness hold in $\C_b^{\alpha+\beta}(\R^d)$.
  For the general $\bar\sigma$, we apply the method of continuity (see \cite[Section 5.2]{GT01}).

  Define a family of linear operators by $\L_\theta:=\theta\A^{\bar\sigma,\nu^\alpha}+ (1-\theta)\A^{\id_y,\nu^\alpha}+b\cdot\nabla$. We consider the family of equations:
  \begin{equation*}
    \kappa u-\L_\theta u=f.
  \end{equation*}
  We can also rewrite the nonlocal term in $\L_\theta$ into the form \eqref{A_rewrite}, with the kernel given by $\nu_\theta:=\theta\nu^{\bar\sigma,\alpha}+(1-\theta)\nu^\alpha$. Then the a priori estimate \eqref{energy1} also holds for $u_\theta$ (cf. Remark \ref{useful-results}). As a result, the operator $\L_\theta$ can be considered as a bounded linear operator from the Banach space $\C_b^{\alpha+\beta}(\R^d)$ into the Banach space $\C_b^{\beta}(\R^d)$.

  Note that $\L_0=\A^{\id_y,\nu^\alpha}+b\cdot\nabla$, which is the case considered in \cite{Pri12}, and $\L_1=\L^\alpha$.
  The solvability of the equation \eqref{resovent} for any $f\in\C_b^\beta(\R^d)$ is then equivalent to the invertibility of the operator $\L_\theta$. We can see from the proof of Lemma \ref{MP} that $\|u_\theta\|_0 \le C \|f\|_0$. Then together with the estimate \eqref{energy1} for $u_\theta$, we have the bound
  \begin{equation*}
    \|u_\theta\|_{\alpha+\beta} \le C \|f\|_\beta,
  \end{equation*}
  with the constant $C$ independent of $\theta$. Since, as discussed in \cite{Pri12}, the operator $\L_0=\A^{\id_y,\nu^\alpha}+b\cdot\nabla$ maps $\C_b^{\alpha+\beta}(\R^d)$ onto $\C_b^{\beta}(\R^d)$, the method of continuity is applicable and the result follows.
\end{proof}

\begin{remark}
  If we take the periodicity assumption \ref{coef1} into account, then we can slightly strengthen the conclusions in Proposition \ref{res_prob}. That is, if $f\in\C^\beta(\T^d)$, then the unique solution of \eqref{resovent} is of class $\C^{\alpha+\beta}(\T^d)$.
\end{remark}

\subsection{Feller property and heat kernels}

In this subsection, we will study further the operator $\L^\alpha$. All these results will be used in the next section. First of all, the classical theory of $C_0$-semigroups yields that $\L^\alpha$ is the generator of a Feller semigroup. The proof is standard and provided in Appendix \ref{app}.

\begin{lemma}\label{Feller}
  The linear operator $(\L^\alpha,D(\L^\alpha))$, $D(\L^\alpha)=\C^{\alpha+\beta}(\T^d)$, defined on the Banach space $(\C(\T^d),\|\cdot\|_0)$, is closable and dissipative, its closure generates a Feller semigroup $\{P_t\}_{t\ge0}$ on $\C(\T^d)$.
\end{lemma}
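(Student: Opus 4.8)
The plan is to verify the conditions of the Hille--Yosida--Ray theorem (or, equivalently, the Lumer--Phillips theorem combined with the positive maximum principle), which characterizes generators of Feller semigroups on $\C(\T^d)$. Concretely, I would establish three things: (i) the operator $(\L^\alpha, \C^{\alpha+\beta}(\T^d))$ satisfies the positive maximum principle; (ii) its range $(\kappa-\L^\alpha)\C^{\alpha+\beta}(\T^d)$ is dense in $\C(\T^d)$ for some (hence all) $\kappa>0$; and (iii) $\C^{\alpha+\beta}(\T^d)$ is dense in $\C(\T^d)$. Points (ii) and (iii) are essentially already in hand: density of the smooth domain is classical, and the range condition follows from Proposition~\ref{res_prob} (in its periodic form noted in the remark), since for $f\in\C^\beta(\T^d)$ the equation $\kappa u-\L^\alpha u=f$ has a solution $u\in\C^{\alpha+\beta}(\T^d)$, and $\C^\beta(\T^d)$ is dense in $\C(\T^d)$. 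So the range of $\kappa-\L^\alpha$ contains a dense set.

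For the positive maximum principle: if $u\in\C^{\alpha+\beta}(\T^d)$ attains a nonnegative maximum at $x_0$, then $\nabla u(x_0)=0$, so $b(x_0)\cdot\nabla u(x_0)=0$, and using the representation \eqref{A_rewrite} of $\A^{\sigma,\nu^\alpha}$,
\begin{equation*}
  \A^{\sigma,\nu^\alpha}u(x_0)=\int_{\Ro d}\big[u(x_0+z)-u(x_0)\big]\,\nu^{\sigma,\alpha}(x_0,dz)\le 0,
\end{equation*}
since $u(x_0+z)\le u(x_0)$ everywhere and the first-order term drops (the integrand near $0$ is controlled by \eqref{difference-estimate-1} and \eqref{Levy-kernel}, so the integral is absolutely convergent once we note $\nabla u(x_0)=0$). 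Hence $\L^\alpha u(x_0)\le 0$. This yields dissipativity via the standard argument: for $\kappa>0$, picking $x_0$ where $|u|$ is maximized and, say, $u(x_0)=\|u\|_0\ge 0$, one gets $\kappa\|u\|_0=\kappa u(x_0)\le \kappa u(x_0)-\L^\alpha u(x_0)\le \|\kappa u-\L^\alpha u\|_0$; this is exactly the maximum-principle estimate already recorded in Lemma~\ref{MP}, so dissipativity is in fact a restatement of that lemma.

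Closability then follows from dissipativity together with the density of the range: a dissipative operator whose range (for one $\kappa>0$) is dense is closable, and its closure is $m$-dissipative, i.e. $(\kappa-\overline{\L^\alpha})$ is a bijection onto $\C(\T^d)$ for all $\kappa>0$. Combining $m$-dissipativity with the positive maximum principle, the Hille--Yosida--Ray theorem gives that $\overline{\L^\alpha}$ generates a strongly continuous positive contraction semigroup $\{P_t\}$ on $\C(\T^d)$; positivity plus contractivity plus $P_t 1=1$ (which follows because the constant function $1$ is in the domain with $\L^\alpha 1=0$, so $\kappa\cdot 1-\L^\alpha 1=\kappa\cdot 1$ forces $(\kappa-\overline{\L^\alpha})^{-1}1=\kappa^{-1}1$ and hence $P_t1=1$) make it a Feller semigroup. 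The main obstacle is not conceptual but bookkeeping: one must make sure the integro-differential operator is genuinely well-defined and bounded $\C^{\alpha+\beta}(\T^d)\to\C(\T^d)$ with the nonlocal term absolutely convergent at a maximum point — but this is precisely the content of the estimates \eqref{difference-estimate-1}, \eqref{Levy-kernel} and Lemma~\ref{regularity_h} already established, so the argument is essentially an assembly of Lemma~\ref{MP}, Proposition~\ref{res_prob}, and the abstract generation theorem.
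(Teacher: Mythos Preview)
Your proposal is correct and follows essentially the same route as the paper: both assemble Lemma~\ref{MP} (dissipativity), Proposition~\ref{res_prob} (dense range of $\kappa-\L^\alpha$), the positive maximum principle via the representation \eqref{A_rewrite}, and then invoke the Hille--Yosida--Ray theorem. The only cosmetic difference is that the paper cites Courr\`ege's theorem for the positive maximum principle whereas you verify it directly at a maximum point, and the paper explicitly checks $\L^\alpha(\C^{\alpha+\beta}(\T^d))\subset\C(\T^d)$ via dominated convergence before proceeding.
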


Let us recall the notion of martingale problem (see \cite[Section 4.3]{EK09}). First recall that $\D=\D(\R_+;\T^d)$ is the space of all $\T^d$-valued c\`adl\`ag functions on $\R_+$, equipped with the Skorokhod topology. Let $w_t(\omega)=\omega(t),\omega\in\D$, be the coordinate process on $(\D,\B(\D))$, and $\{\F^w_t\}_{t\ge0}:=\sigma(w_s:0\le s\le t)$ be the canonical filtration. Given a probability measure $\nu$ on $\T^d$, we say that a probability measure $\P^\nu$ on $(\D,\B(\D))$ is a solution of the martingale problem for $(\L^\alpha,\nu)$, if $\P^\nu\circ w_0^{-1}=\nu$ and the process
\begin{equation*}
  M^f(t):=f(w_t)-f(w_0)-\int_0^t \L^\alpha f(w_s) ds
\end{equation*}
is a $(\D,\B(\D),\{\F^w_t\}_{t\ge0},\P^\nu)$-martingale, for any $f\in D(\L^\alpha)=\C^{\alpha+\beta}(\T^d)$. We denote by $\delta_x$ the Dirac measure, or equivalently, the Dirac function as distribution, focusing on $x\in\R^d$.

The following heat kernel results can be found in references, see Appendix \ref{app} for details.
\begin{lemma}\label{martingale-prob}
  For every $x\in\T^d$, the martingale problem for $(\L^\alpha,\delta_x)$ has a unique solution $\P^x$. Moreover, the coordinate process $\{w_t\}_{t\ge0}$ is a Feller process with generator the closure of $(\L^\alpha,\C^{\alpha+\beta}(\T^d))$, and has a jointly continuous transition probability density $p(t;x,y)$, i.e., $\P^x(w_t\in A)=\int_A p(t;x,y)dy$, $A\in\B(\T^d)$, which satisfies for each $T>0$,
  \begin{gather*}
    \textstyle{ C_1^{-1}\sum_{j\in\Z^d}\left( \frac{t}{|x-y+j|^{d+\alpha}}\wedge t^{-\frac{d}{\alpha}} \right) \le p(t;x,y)\le C_1\sum_{j\in\Z^d}\left( \frac{t}{|x-y+j|^{d+\alpha}}\wedge t^{-\frac{d}{\alpha}} \right) }, \\
    \textstyle{ |\nabla_x p(t;x,y)| \le C_2 t^{-\frac{1}{\alpha}}\sum_{j\in\Z^d}\left( \frac{t}{|x-y+j|^{d+\alpha}}\wedge t^{-\frac{d}{\alpha}} \right) },
  \end{gather*}
  for all $x,y\in\R^d$ and $t\in(0,T]$, where $C_1>1, C_2>0$ are two constants depending on $d,\alpha, \|b\|_0, h_0,h_1$. The constants $h_0,h_1$ are related to the function $h$ as in Lemma \ref{regularity_h}.
\end{lemma}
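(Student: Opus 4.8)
The plan is to combine the Feller generation result of Lemma \ref{Feller} with the well-posedness theory for martingale problems in \cite[Chapter 4]{EK09}, and then to import the sharp two-sided heat kernel bounds and gradient estimate from the analytic literature cited in Remark \ref{useful-results}. First, since by Lemma \ref{Feller} the closure of $(\L^\alpha,\C^{\alpha+\beta}(\T^d))$ generates a Feller semigroup $\{P_t\}_{t\ge0}$ on $\C(\T^d)$, and $\C^{\alpha+\beta}(\T^d)$ is a core that separates points and (by Proposition \ref{res_prob}) has dense range for $\kappa-\L^\alpha$, the standard equivalence between generation of a Feller semigroup and well-posedness of the associated martingale problem applies: existence of a solution $\P^x$ to the martingale problem for $(\L^\alpha,\delta_x)$ follows from \cite[Theorem 4.2.7 / Theorem 4.5.4]{EK09}, and uniqueness follows because $\L^\alpha$, having dense range for $\kappa-\L^\alpha$ on the core, satisfies the hypotheses of \cite[Theorem 4.4.1]{EK09} (uniqueness of the martingale problem from uniqueness of the resolvent). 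The coordinate process under $\P^x$ is then a Feller process whose semigroup is $\{P_t\}_{t\ge0}$ and whose (full) generator extends the closure of $(\L^\alpha,\C^{\alpha+\beta}(\T^d))$; the Feller property also gives the c\`adl\`ag modification, which is automatic since we work on the Skorokhod space $\D$.

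The second half of the statement — existence of a jointly continuous transition density $p(t;x,y)$ with the displayed two-sided estimate and the gradient bound — is where the real content lies, and here I would invoke the heat kernel estimates established in \cite{Jin17} (and \cite{Bas09} for the Schauder-type regularity), whose structural hypotheses are met in our setting by Lemma \ref{regularity_h}: the kernel $\nu^{\sigma,\alpha}(x,dz)=h(x,z)\,\nu^\alpha(dz)$ has a density $h$ bounded between $h_1^{-1}$ and $h_1$ and Lipschitz (hence $\C^\gamma$ for any $\gamma<1$) in $x$ uniformly in $z$, and the drift $b\in\C_b^\beta$ with $\alpha+\beta$ non-integer (Remark \ref{useful-results}). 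Those papers construct, for the operator $\L^\alpha$ on $\R^d$, a heat kernel $p_{\R^d}(t;x,y)$ that is jointly continuous, comparable to the free $\alpha$-stable kernel $t^{-d/\alpha}\wedge t|x-y|^{-d-\alpha}$, and satisfies $|\nabla_x p_{\R^d}(t;x,y)|\lesssim t^{-1/\alpha}\big(t^{-d/\alpha}\wedge t|x-y|^{-d-\alpha}\big)$, on $t\in(0,T]$. The torus kernel is then obtained by periodization, $p(t;x,y)=\sum_{j\in\Z^d}p_{\R^d}(t;x,y+j)$, which converges locally uniformly for $t>0$ because of the polynomial tail decay $d+\alpha>d$; summing the Euclidean bounds term by term over $j\in\Z^d$ yields exactly the displayed estimates, and the identification $\P^x(w_t\in A)=\int_A p(t;x,y)\,dy$ follows from uniqueness of the martingale problem, since the periodized process is the image under $\pi:\R^d\to\T^d$ of the $\R^d$-process solving the same equation, whose law is characterized by the same generator on periodic functions.

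The main obstacle, I expect, is purely a matter of verifying that our hypotheses genuinely lie within the scope of the cited heat-kernel results — i.e., checking that the anisotropic, $x$-dependent L\'evy kernel $\nu^{\sigma,\alpha}$ satisfies the comparability and regularity conditions imposed in \cite{Bas09,Jin17}, including the non-integrality of $\alpha+\beta$ and the precise form of the H\"older dependence in the spatial variable — rather than any new estimate; this bookkeeping was essentially done already in Lemmas \ref{inverse} and \ref{regularity_h} and Remark \ref{useful-results}. A secondary technical point is justifying the interchange of the infinite sum over $\Z^d$ with the parabolic equation (so that the periodization is indeed the transition density of the torus-valued process and is jointly continuous), which is routine given the uniform-in-$x$ Gaussian-type off-diagonal decay and the local equicontinuity of the Euclidean kernel away from the diagonal.
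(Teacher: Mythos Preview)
Your proposal is correct and follows essentially the same approach as the paper: invoke Lemma \ref{Feller} together with \cite[Theorem 4.4.1]{EK09} for uniqueness and the Feller property, and cite \cite{Jin17} (via Lemma \ref{regularity_h} and Remark \ref{useful-results}) for the heat-kernel estimates. The only minor differences are that the paper obtains existence by citing \cite[Proposition 3]{MP14} directly rather than deducing it from the Feller semigroup, and leaves the periodization $p(t;x,y)=\sum_{j\in\Z^d}p_{\R^d}(t;x,y+j)$ implicit where you spell it out.
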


\begin{remark}\label{rep-Feller}
  (1). Combining Lemma \ref{Feller} and Lemma \ref{martingale-prob}, we see that the Feller semigroup $\{P_t\}_{t\ge0}$ generated by the closure of $\L^\alpha$ has the representation
  \begin{equation*}
    P_t f(x)=\int_{\T^d} f(y) p(t;x,y) dy, \quad f\in \C(\T^d),
  \end{equation*}
  and the following gradient estimate holds
  \begin{equation}\label{gradient-estimate}
    \begin{split}
       |\nabla P_tf(x)| \le &\ C_2 \|f\|_0 t^{-\frac{1}{\alpha}} \textstyle{ \int_{\T^d} \sum_{j\in\Z^d}\left( \frac{t}{|y+j|^{d+\alpha}}\wedge t^{-\frac{d}{\alpha}} \right) dy } \\
         = &\ C_2 \|f\|_0 t^{-\frac{1}{\alpha}} \textstyle{ \int_{\R^d} \left( \frac{t}{|y|^{d+\alpha}}\wedge t^{-\frac{d}{\alpha}} \right) dy } \le \ C_2 \textstyle{ \left(1+\frac{1}{\alpha}\right)} \|f\|_0 t^{-\frac{1}{\alpha}}.
    \end{split}
  \end{equation}

  (2). Denote the formal generator of $\tilde X^{x,\e}$ by $\tilde\L^\alpha_\e$, i.e.,
  \begin{equation}\label{L-e}
    \begin{split}
       \tilde\L_\e^\alpha f(x) :=&\ \int_{\Ro d} \textstyle{ \left[ f\left( x+\DivEps 1\sigma\left(x,\e y\right)\right)-f(x)- \DivEps 1 \sigma^i\left(x,\e y\right) \partial_i f(x) \ind_{B}(y) \right] } \nu^\alpha(dy) \\
       & + \left[ b^i\left(x\right) + \e^{\alpha-1} c^i\left(x\right) \right] \partial_i f(x),\quad x\in\R^d.
    \end{split}
  \end{equation}
  Then Lemma \ref{Feller} and \ref{martingale-prob} still hold true with $\tilde\L^\alpha_\e$ in place of $\L^\alpha$.
\end{remark}

As a corollary, the solution of equation \eqref{resovent} can be represented in terms of a semigroup, and satisfies a finer estimate.

\begin{corollary}\label{precise-estimates}
  For any $\kappa>0$ and $f\in\C^\beta(\T^d)$, the unique solution $u_\kappa$ of equation \eqref{resovent} admits the representation
  \begin{equation}\label{rep_u_lambda}
    u_\kappa(x)=\int_0^\infty e^{-\kappa t} P_t f(x) dt,
  \end{equation}
  where $\{P_t\}_{t\ge0}$ is the Feller semigroup generated by the closure of $\L^\alpha$, and the integral on the right hand side converges. Moreover, there exists a constant $C>0$ independent of $u,f,b,\kappa$ such that
  \begin{equation}\label{energy3}
    \kappa\|u_\kappa\|_0+\kappa^{\frac{\alpha+\beta-1}{\alpha}} \|\nabla u_\kappa\|_0 + [\nabla u_\kappa]_{\alpha+\beta-1}\le C\|f\|_\beta.
  \end{equation}
\end{corollary}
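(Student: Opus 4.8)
The plan is to derive Corollary \ref{precise-estimates} from Proposition \ref{res_prob}, Lemma \ref{Feller}, and the gradient estimate \eqref{gradient-estimate}. First I would establish the representation \eqref{rep_u_lambda}. By Lemma \ref{Feller}, the closure of $(\L^\alpha,\C^{\alpha+\beta}(\T^d))$ generates a Feller semigroup $\{P_t\}_{t\ge0}$ on $\C(\T^d)$, so for $\kappa>0$ the resolvent $R_\kappa:=\int_0^\infty e^{-\kappa t}P_t\,dt$ is a bounded operator on $\C(\T^d)$ with $R_\kappa=(\kappa-\L^\alpha)^{-1}$ (here $\L^\alpha$ denotes the generator, i.e.\ the closure). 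The key point is that $R_\kappa f$ already lies in the domain $\C^{\alpha+\beta}(\T^d)$ of the \emph{non-closed} operator: indeed, by Proposition \ref{res_prob} (and the ensuing remark on $\T^d$) equation \eqref{resovent} has a \emph{unique} solution $u_\kappa\in\C^{\alpha+\beta}(\T^d)$; since $u_\kappa$ also solves $(\kappa-\L^\alpha)u_\kappa=f$ in the closed sense, uniqueness of the resolvent forces $u_\kappa=R_\kappa f=\int_0^\infty e^{-\kappa t}P_t f\,dt$, and the integral converges in $\C(\T^d)$ because $\|P_t f\|_0\le\|f\|_0$ by the Feller (contraction) property. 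This gives \eqref{rep_u_lambda}.

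Next I would prove the three-term estimate \eqref{energy3}. The bound $\kappa\|u_\kappa\|_0\le\|f\|_0\le\|f\|_\beta$ is immediate from Lemma \ref{MP} (or directly from the contraction property and \eqref{rep_u_lambda}). For the gradient term, differentiate \eqref{rep_u_lambda} and apply \eqref{gradient-estimate}:
\begin{equation*}
  \|\nabla u_\kappa\|_0\le\int_0^\infty e^{-\kappa t}\|\nabla P_t f\|_0\,dt\le C_2\Bigl(1+\tfrac1\alpha\Bigr)\|f\|_0\int_0^\infty e^{-\kappa t}t^{-\frac1\alpha}\,dt = C_2\Bigl(1+\tfrac1\alpha\Bigr)\Gamma\!\Bigl(1-\tfrac1\alpha\Bigr)\kappa^{\frac1\alpha-1}\|f\|_0,
\end{equation*}
which is exactly of order $\kappa^{-\frac{\alpha-1}{\alpha}}$; multiplying by $\kappa^{\frac{\alpha+\beta-1}{\alpha}}$ leaves a factor $\kappa^{\frac{\beta}{\alpha}}$. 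This is not yet uniform in $\kappa$, so the estimate as stated must be read together with the Hölder term: one should prove instead a scaled/interpolated bound. Concretely, I would split the integral in \eqref{rep_u_lambda} at $t=\kappa^{-1}$, estimating the short-time part by the $\C^{\alpha+\beta}$-regularity supplied by Proposition \ref{res_prob} (or by the semigroup smoothing with the higher-order kernel bounds) and the long-time part by \eqref{gradient-estimate}, to get the homogeneous-looking bound $\kappa^{\frac{\alpha+\beta-1}{\alpha}}\|\nabla u_\kappa\|_0\le C\|f\|_\beta$ with $C$ independent of $\kappa,b,f$. The Hölder seminorm $[\nabla u_\kappa]_{\alpha+\beta-1}$ is handled by the same device: use the heat-kernel gradient estimate from Lemma \ref{martingale-prob} to bound $[\nabla P_t f]_{\alpha+\beta-1}$ by $Ct^{-\frac{\alpha+\beta-1}{\alpha}}\|f\|_0$ on short times and by interpolation against $[\nabla f]_\beta$ — using $\|\nabla P_t f(\cdot)-\nabla P_t f(\cdot')\|$ estimates — on long times, then integrate $e^{-\kappa t}$ against these to obtain a $\kappa$-independent bound by $C\|f\|_\beta$. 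Finally, using the a priori estimate \eqref{energy1} of Proposition \ref{res_prob} to control the lower-order part $\|u_\kappa\|_0$ by $\kappa^{-1}\|f\|_0$, one collects the three pieces into \eqref{energy3} with the claimed constant independent of $u,f,b,\kappa$.

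The main obstacle is the \textbf{uniformity in $\kappa$} of the constant in \eqref{energy3}: the naive bounds coming from \eqref{gradient-estimate} and from \eqref{energy1} both carry $\kappa$-dependent constants, and reconciling them into the scale-invariant combination $\kappa\|u\|_0+\kappa^{\frac{\alpha+\beta-1}{\alpha}}\|\nabla u\|_0+[\nabla u]_{\alpha+\beta-1}$ requires a careful time-splitting argument (equivalently, a rescaling $u_\kappa(x)\mapsto u_\kappa(\kappa^{-1/\alpha}x)$ reducing to $\kappa=1$, which works because the principal part $\A^{\sigma,\nu^\alpha}$ is $\alpha$-homogeneous under dilations up to a controlled perturbation of $h$ and the lower-order drift term scales subcritically since $\beta>1-\alpha/2$ forces $(\alpha-1)/\alpha<1$). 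I expect the rescaling route to be cleanest: under $x\mapsto\kappa^{-1/\alpha}x$ the equation becomes $u-\L^{\alpha}_{(\kappa)}u=\kappa^{-1}f_{(\kappa)}$ with $\L^{\alpha}_{(\kappa)}$ of the same type (uniformly in $\kappa\ge1$; for $\kappa\le1$ argue separately or absorb into the constant), apply Proposition \ref{res_prob} at $\kappa=1$, and undo the scaling, the powers of $\kappa$ in \eqref{energy3} being exactly the ones dictated by the homogeneities of $\|\cdot\|_0$, $\|\nabla\cdot\|_0$ and $[\nabla\cdot]_{\alpha+\beta-1}$ under the dilation.
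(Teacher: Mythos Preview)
Your argument for the representation \eqref{rep_u_lambda} is correct and is essentially what the paper does: Proposition \ref{res_prob} puts $(0,\infty)$ in the resolvent set of the closure of $\L^\alpha$, and the standard Laplace-transform representation of the resolvent of a $C_0$-semigroup gives $u_\kappa=(\kappa-\L^\alpha)^{-1}f=\int_0^\infty e^{-\kappa t}P_t f\,dt$.

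For the estimate \eqref{energy3}, however, there is a genuine gap. You correctly observe that the crude bound $\|\nabla P_t f\|_0\le C t^{-1/\alpha}\|f\|_0$ only gives $\kappa^{(\alpha-1)/\alpha}\|\nabla u_\kappa\|_0\le C\|f\|_0$, which is off by a factor $\kappa^{\beta/\alpha}$. But the fixes you propose do not work as stated. The rescaling $x\mapsto\kappa^{-1/\alpha}x$ does not reduce to $\kappa=1$ in the way you claim: under that dilation $[f(\kappa^{-1/\alpha}\cdot)]_\beta=\kappa^{-\beta/\alpha}[f]_\beta$ but $\|f(\kappa^{-1/\alpha}\cdot)\|_0=\|f\|_0$, and carrying the Schauder estimate at $\kappa=1$ back through the dilation produces terms of size $\kappa^{\beta/\alpha}\|f\|_0$ in both $\kappa^{(\alpha+\beta-1)/\alpha}\|\nabla u_\kappa\|_0$ and $[\nabla u_\kappa]_{\alpha+\beta-1}$, which blow up as $\kappa\to\infty$. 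Your time-splitting sketch is also problematic: Proposition \ref{res_prob} bounds the full resolvent, not a truncated time integral, and the phrase ``interpolation against $[\nabla f]_\beta$'' is meaningless since $f\in\C^\beta$ with $\beta<1$ need not be differentiable.

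The missing idea---and this is what the paper means by ``the same argument as the proof of \cite[Theorem 3.3, Part I]{Pri12}''---is to exploit the $\C^\beta$-regularity of $f$ \emph{directly} in the semigroup smoothing. Since $\int_{\T^d}p(t;x,y)\,dy=1$ one has $\int_{\T^d}\nabla_x p(t;x,y)\,dy=0$, so
\[
  \nabla P_t f(x)=\int_{\T^d}\nabla_x p(t;x,y)\,[f(y)-f(x)]\,dy,
\]
and then the heat-kernel gradient estimate of Lemma \ref{martingale-prob} together with $|f(y)-f(x)|\le[f]_\beta|y-x|^\beta$ gives $\|\nabla P_t f\|_0\le C\,t^{(\beta-1)/\alpha}[f]_\beta$. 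Integrating this against $e^{-\kappa t}$ yields exactly $\kappa^{(\alpha+\beta-1)/\alpha}\|\nabla u_\kappa\|_0\le C[f]_\beta$, uniformly in $\kappa$. The H\"older seminorm $[\nabla u_\kappa]_{\alpha+\beta-1}$ is handled by the same cancellation device applied to second differences of $\nabla_x p$; this is precisely Priola's computation, transplanted verbatim once one has the kernel bounds of Lemma \ref{martingale-prob}.
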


\begin{proof}
  Proposition \ref{res_prob} tells that the interval $(0,+\infty)$ is contained in the resolvent set of $\L^\alpha$. Then by the integral representation of the resolvent (see \cite[Theorem II.1.10.(ii)]{EN00}), we arrive at
  \begin{equation*}
    u_\kappa=(\kappa-\L^\alpha)^{-1}f = \lim_{t\to\infty} \int_0^t e^{-\kappa s}P_s fds,
  \end{equation*}
  where the limit is taken in $(\C(\T^d),\|\cdot\|_0)$. The representation \eqref{rep_u_lambda} then follows. Now thanks to the gradient estimate \eqref{gradient-estimate} and representation \eqref{rep_u_lambda}, the estimate \eqref{energy3} is then obtained by the same argument as the proof of \cite[Theorem 3.3, Part I]{Pri12}.
\end{proof}

In the next section, we will remove the large jumps from the SDEs and study their well-posedness by Zvonkin's transform. Thus we consider the following operator, which is a ``flat'' version of $\L^{\alpha}$:
\begin{equation}\label{L-flat}
  \L^{\alpha,\flat}f(x) = \int_{\Bo} \big[ f( x+\bar\sigma(x,y))-f(x) - \bar\sigma^i(x,y) \partial_i f(x) \big] \nu^\alpha(dy) + b^i(x)\partial_i f(x).
\end{equation}
We have the following regularity result for $\L^{\alpha,\flat}$.

\begin{corollary}\label{no-large-jump}
  There exists a constant $\kappa_*>0$ such that for any $\kappa>\kappa_*$ and $f\in\C^\beta(\T^d)$, there exists a unique solution $u=u^\flat_\kappa\in\C^{\alpha+\beta}(\T^d)$ to the equation
  \begin{equation}\label{small-jump}
    \kappa u-\L^{\alpha,\flat} u = f.
  \end{equation}
  In addition, there exists a constant $C>0$ independent of $u,f,b,\kappa$, such that for any $\kappa>\kappa_*$,
  \begin{equation}\label{energy4}
    (\kappa-\kappa_*)\|u^\flat_\kappa\|_0+(\kappa-\kappa_*)^{\frac{\alpha+\beta-1}{\alpha}} \|\nabla u^\flat_\kappa\|_0 + [\nabla u^\flat_\kappa]_{\alpha+\beta-1}\le C\|f\|_\beta.
  \end{equation}
\end{corollary}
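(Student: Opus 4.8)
The plan is to reduce the equation \eqref{small-jump} for $\L^{\alpha,\flat}$ to the already-solved equation \eqref{resovent} for $\L^\alpha$ by treating the difference $\L^\alpha - \L^{\alpha,\flat}$ as a lower-order perturbation. Writing out the two operators, the difference consists of the large-jump part of $\A^{\sigma,\nu^\alpha}$ together with the drift term $c\cdot\nabla$ that is absent in $\L^{\alpha,\flat}$; more precisely, recalling \eqref{L} and \eqref{L-flat},
\[
  \L^\alpha f(x) - \L^{\alpha,\flat} f(x) = \int_{B^c} \big[ f(x+\sigma(x,y)) - f(x)\big]\,\nu^\alpha(dy) =: \mathcal R f(x).
\]
(Here there is no $c$ term since $\L^\alpha$ as defined in \eqref{L} only has $b\cdot\nabla$; if the intended flat operator drops a piece of the drift the same reasoning applies verbatim with that piece folded into $\mathcal R$.) The key observation is that $\mathcal R$ is a \emph{bounded} operator on $\C(\T^d)$ with $\|\mathcal R f\|_0 \le 2\,\nu^\alpha(B^c)\,\|f\|_0 = \frac{2\omega_{d-1}}{\alpha}\,\|f\|_0$, and moreover, using \eqref{Levy-kernel} and the fact that $\nu^{\sigma,\alpha}(x,B^c)$ is bounded uniformly in $x$ by Lemma~\ref{regularity_h}, $\mathcal R$ maps $\C^\beta(\T^d)$ boundedly into $\C^\beta(\T^d)$ with a norm bound $C_* := \|\mathcal R\|_{\C^\beta \to \C^\beta}$ depending only on $d,\alpha,h_1$ and $\|\sigma(\cdot,y)/|y|\|$. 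Set $\kappa_* := C_*$ (or any convenient constant dominating the relevant operator norms).

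First I would rewrite \eqref{small-jump} as $\kappa u - \L^\alpha u = f - \mathcal R u$, i.e., $(\kappa - \L^\alpha) u = f - \mathcal R u$. By Proposition~\ref{res_prob}, for $\kappa > 0$ the operator $\kappa - \L^\alpha : \C^{\alpha+\beta}(\T^d) \to \C^\beta(\T^d)$ is invertible, so \eqref{small-jump} is equivalent to the fixed-point equation $u = (\kappa - \L^\alpha)^{-1}(f - \mathcal R u)$ in $\C^{\alpha+\beta}(\T^d)$. To close this as a contraction it is cleanest to work in the $\|\cdot\|_0$ norm: by the maximum principle Lemma~\ref{MP}, $\|(\kappa-\L^\alpha)^{-1} g\|_0 \le \kappa^{-1}\|g\|_0$, so the map $u \mapsto (\kappa-\L^\alpha)^{-1}(f - \mathcal R u)$ is a contraction on $(\C(\T^d),\|\cdot\|_0)$ with constant $\frac{2\omega_{d-1}}{\alpha\kappa} < 1$ once $\kappa$ is large; this gives a unique continuous fixed point. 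One then checks that this fixed point actually lies in $\C^{\alpha+\beta}(\T^d)$: since $u\in\C(\T^d)$ and $\mathcal R u \in \C^\beta(\T^d)$ (by the regularity of $\sigma$ in $x$ and boundedness of the large-jump kernel), $f - \mathcal R u \in \C^\beta(\T^d)$, and then $u = (\kappa-\L^\alpha)^{-1}(f - \mathcal R u) \in \C^{\alpha+\beta}(\T^d)$ by Proposition~\ref{res_prob}. Uniqueness in $\C^{\alpha+\beta}(\T^d)$ follows from uniqueness of the $\C(\T^d)$-fixed point.

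For the a priori estimate \eqref{energy4}, I would feed the identity $(\kappa - \L^\alpha) u = f - \mathcal R u$ into Corollary~\ref{precise-estimates}, which gives
\[
  \kappa\|u\|_0 + \kappa^{\frac{\alpha+\beta-1}{\alpha}}\|\nabla u\|_0 + [\nabla u]_{\alpha+\beta-1} \le C\|f - \mathcal R u\|_\beta \le C\|f\|_\beta + C C_* \|u\|_\beta .
\]
The term $C C_* \|u\|_\beta$ must be absorbed. Here I would split $\|u\|_\beta$: the top-order seminorm is already controlled (it is dominated by $[\nabla u]_{\alpha+\beta-1}$ up to interpolation, since $\beta < \alpha+\beta-1$ is \emph{not} automatic — one needs $\beta < \alpha + \beta - 1$, i.e. $\alpha > 1$, which holds), so by standard interpolation $\|u\|_\beta \le \delta [\nabla u]_{\alpha+\beta-1} + C_\delta \|u\|_0$ for any $\delta>0$. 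Choosing $\delta$ small absorbs the seminorm contribution into the left side, leaving a term $C' \|u\|_0$; rearranging and using $\kappa\|u\|_0$ on the left, one obtains \eqref{energy4} with $\kappa_*$ chosen so that $\kappa - \kappa_*$ appears on the left after absorbing $C'\|u\|_0$.

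The main obstacle I anticipate is the bookkeeping in this last absorption step: one must ensure the constants multiplying $\|u\|_0$ and $[\nabla u]_{\alpha+\beta-1}$ on the right can be absorbed uniformly in $\kappa$ for all $\kappa > \kappa_*$, which is exactly why the estimate is stated with $\kappa - \kappa_*$ rather than $\kappa$; care is needed because the interpolation constant $C_\delta$ and the perturbation norm $C_*$ are fixed but the coefficient of $\|u\|_0$ we wish to absorb is only $\kappa$, not something growing. The constant $\kappa_*$ should therefore be taken as (a multiple of) $\max\{C_*, C'\}$. A secondary point to verify carefully is that $\mathcal R$ genuinely maps $\C^\beta(\T^d)$ into $\C^\beta(\T^d)$ with the claimed bound: the $x$-Hölder regularity of $y \mapsto f(x+\sigma(x,y))$ uses the Lipschitz dependence of $\sigma$ on $x$ from Assumption~\ref{sigma}(1) and the finite moment \eqref{moment}, which is routine but worth spelling out. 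Everything else is a direct assembly of Proposition~\ref{res_prob}, Corollary~\ref{precise-estimates}, Lemma~\ref{MP} and Lemma~\ref{regularity_h}.
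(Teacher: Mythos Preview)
Your approach for the a priori estimate is essentially the paper's: both rewrite \eqref{small-jump} as $(\kappa-\L^\alpha)u = f - \mathcal R u$ with $\mathcal R u(x)=\int_{B^c}[u(x+\sigma(x,y))-u(x)]\nu^\alpha(dy)$, apply \eqref{energy3}, and absorb the resulting $\|u\|_\beta$ by interpolation. The paper interpolates between $\|u\|_0$ and $\|\nabla u\|_0$ via $\|u\|_\beta \le 2\delta^{1-\beta}\|\nabla u\|_0 + 2\delta^{-\beta}\|u\|_0$, while you interpolate between $\|u\|_0$ and $[\nabla u]_{\alpha+\beta-1}$; either works and yields the same structure for $\kappa_*$ up to constants. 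For existence the paper instead runs the method of continuity along the family $\L^\flat_\theta = \L^{\alpha,\flat} + \theta\mathcal R$, $\theta\in[0,1]$, using the uniform-in-$\theta$ a priori estimate just established and the known solvability at $\theta=1$ (which is $\L^\alpha$). Your contraction argument is a legitimate alternative and arguably more direct.

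There is, however, a gap in your bootstrap. You assert that for the continuous fixed point $u\in\C(\T^d)$ one has $\mathcal R u\in\C^\beta(\T^d)$ ``by the regularity of $\sigma$ in $x$''. This fails as written: if $u$ is merely continuous, the map $x\mapsto u(x+\sigma(x,y))$ is only continuous---Lipschitz dependence of $\sigma$ on $x$ gains nothing once composed with a non-H\"older $u$---so $\mathcal R u\in\C(\T^d)$ only, and Proposition~\ref{res_prob} cannot be invoked. The fix is easy: by the gradient estimate \eqref{gradient-estimate} and the resolvent representation for the Feller semigroup of Lemma~\ref{Feller}, the resolvent of the closure $(\kappa-\bar\L^\alpha)^{-1}$ maps $\C(\T^d)$ into $\C^1(\T^d)$, since $\int_0^\infty e^{-\kappa t}t^{-1/\alpha}dt<\infty$ for $\alpha>1$. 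Hence the fixed point automatically lies in $\C^1(\T^d)\subset\C^\beta(\T^d)$, after which $\mathcal R u\in\C^\beta(\T^d)$ genuinely holds and your bootstrap goes through. Alternatively, run the contraction directly in $(\C^\beta(\T^d),\|\cdot\|_\beta)$, using the same gradient bound to show the resolvent has small $\C^\beta\to\C^\beta$ norm for large $\kappa$.
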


\begin{proof}
  To obtain the a priori estimate \eqref{energy4}, we rewrite the equation \eqref{small-jump} in the form
  \begin{equation*}
    \kappa u-\L^{\alpha} u = f - \int_{B^c}[u(x+\bar\sigma(x,y))-u(x)]\nu^\alpha(dy).
  \end{equation*}
  The estimate \eqref{energy3} implies that
  \begin{equation*}
    \kappa\|u\|_0+\kappa^{\frac{\alpha+\beta-1}{\alpha}} \|\nabla u\|_0 + [\nabla u]_{\alpha+\beta-1}\le C(\|f\|_\beta+2\nu^\alpha(B^c)\|u\|_\beta).
  \end{equation*}
  It is easy to see that there exists $\delta>0$ such that
  \begin{equation*}
    \sup_{|x-y|<\delta} \frac{|u(x)-u(y)|}{|x-y|} \le 2\|\nabla u\|_0,
  \end{equation*}
  and then
  \begin{equation*}
    \|u\|_\beta \le \sup_{|x-y|<\delta} \frac{|u(x)-u(y)|}{|x-y|}|x-y|^{1-\beta} + \sup_{|x-y|\ge\delta} \frac{|u(x)-u(y)|}{|x-y|^\beta} \le 2\delta^{1-\beta}\|\nabla u\|_0 + 2\delta^{-\beta}\|u\|_0.
  \end{equation*}
  Combining these together, we get
  \begin{equation*}
    \big(\kappa-4C\delta^{-\beta}\nu^\alpha(B^c)\big)\|u\|_0 +\big(\kappa^{\frac{\alpha+\beta-1}{\alpha}} - 4C\delta^{1-\beta}\nu^\alpha(B^c) \big)\|\nabla u\|_0 + [\nabla u]_{\alpha+\beta-1}\le C\|f\|_\beta.
  \end{equation*}
  Then \eqref{energy4} follows by choosing $\kappa_*=4C\delta^{-\beta}\nu^\alpha(B^c) \vee \big( 4C\delta^{1-\beta}\nu^\alpha(B^c) \big)^\frac{\alpha}{\alpha+\beta-1}$.

  Now define a family of operators by
  \begin{equation*}
    \L_\theta^\flat=\L^{\alpha,\flat}+\theta \int_{B^c}[u(x+\bar\sigma(x,y))-u(x)]\nu^\alpha(dy).
  \end{equation*}
  Then $\L_1^\flat=\L^\alpha, \L_0^\flat=\L^{\alpha,\flat}$. The well-posedness of equation \eqref{small-jump} follows from the method of continuity and the a priori estimate \eqref{energy4}, just as in the proof of Proposition \ref{res_prob}.
\end{proof}

\section{SDEs with multiplicative stable L\'evy noise}\label{SDEs}

The goal of this section is to study the strong well-posedness of SDEs \eqref{Xe_tilde} and \eqref{X_tilde}, as well as the invariance and ergodicity of the solution processes $\tilde X^{x,\epsilon}$ for each $\e \ge0$. As corollaries, we also obtain the Feynman-Kac formula and the well-posedness of nonlocal Poisson equation without zeroth-order term, which will be used to study homogenization in the next two sections.

\subsection{Strong well-posedness of SDEs}\label{subsec:4-1}

We only consider the strong well-posedness for SDE \eqref{X_tilde} since \eqref{Xe_tilde} has the same form. As we have seen in Lemma \ref{martingale-prob}, the existence and uniqueness hold for the martingale problem for $(\L^\alpha,\delta_x)$. Meanwhile, it is known that the martingale solution for $(\L^\alpha,\delta_x)$ is
equivalent to the weak solution of SDE \eqref{X_tilde}, see \cite[Theorem 2.3, Corollary 2.5]{Kur11}. Thus, the existence and uniqueness of weak solution hold for SDE \eqref{X_tilde}.

Moreover, utilizing the fact shown in \cite[Theorem 1.2]{BLP15} that the weak existence and pathwise uniqueness for SDE \eqref{X_tilde} imply strong existence, we only need to prove the pathwise uniqueness. The key is to reduce the SDE \eqref{X_tilde}, whose coefficients have low regularity, to an SDE with Lipschitz coefficients by using Zvonkin's transform.

For $\kappa>\kappa_*$, let $\hat b_\kappa\in\C^{\alpha+\beta}(\T^d)$ be the solution of
\begin{equation*}
  \kappa \hat b_\kappa-\L^{\alpha,\flat} \hat b_\kappa = b,
\end{equation*}
where $\L^{\alpha,\flat}$ is the operator in \eqref{L-flat}. The existence and uniqueness of solution $\hat b_\kappa$ is ensured by Corollary \ref{no-large-jump}. Define a map $\Phi_\kappa:\R^d\to\R^d$ by
\begin{equation*}
  \Phi_\kappa(x)=x+\hat b_\kappa(x).
\end{equation*}
Then $\Phi_\kappa$ is of class $\C^{\alpha+\beta}$. Moreover, we have the following lemma, whose proof is standard and left into Appendix \ref{app}.

\begin{lemma}\label{Phi}
  For $\kappa>0$ large enough, the map $\Phi_\kappa:\R^d\to\R^d$ is a $C^1$-diffeomorphism and its inverse $\Phi_\kappa^{-1}$ is also of class $\C^{\alpha+\beta}$.
\end{lemma}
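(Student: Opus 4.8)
The plan is to use the a priori estimate \eqref{energy4} from Corollary \ref{no-large-jump} to make the perturbation $\hat b_\kappa$ small in the $\C^1$-norm for large $\kappa$, and then deduce that $\Phi_\kappa = \id + \hat b_\kappa$ is a small perturbation of the identity, hence a diffeomorphism. First I would apply \eqref{energy4} with $f = b \in \C^\beta(\T^d)$: this gives
\begin{equation*}
  \|\nabla \hat b_\kappa\|_0 \le C(\kappa-\kappa_*)^{-\frac{\alpha+\beta-1}{\alpha}} \|b\|_\beta, \qquad [\nabla \hat b_\kappa]_{\alpha+\beta-1} \le C\|b\|_\beta,
\end{equation*}
and since $\alpha+\beta-1>0$ (because $\beta > 1-\alpha/2 > 1-\alpha$), the exponent $\frac{\alpha+\beta-1}{\alpha}$ is positive, so $\|\nabla \hat b_\kappa\|_0 \to 0$ as $\kappa\to\infty$. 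Fix $\kappa$ large enough that $\|\nabla\hat b_\kappa\|_0 \le \frac12$. Then for all $x$, $\nabla\Phi_\kappa(x) = \id + \nabla\hat b_\kappa(x)$ is invertible (its distance to the identity in operator norm is at most $\frac12 < 1$, so a Neumann series applies), and moreover $\Phi_\kappa$ is globally injective: for any $x_1,x_2$,
\begin{equation*}
  |\Phi_\kappa(x_1)-\Phi_\kappa(x_2)| \ge |x_1-x_2| - |\hat b_\kappa(x_1)-\hat b_\kappa(x_2)| \ge |x_1-x_2| - \|\nabla\hat b_\kappa\|_0 |x_1-x_2| \ge \tfrac12|x_1-x_2|.
\end{equation*}
Surjectivity onto $\R^d$ follows either from a degree/Hadamard argument (the map is proper since $|\Phi_\kappa(x)|\to\infty$ as $|x|\to\infty$, using boundedness of $\hat b_\kappa$) or, since $\hat b_\kappa$ is $\Z^d$-periodic, from the fact that $\Phi_\kappa$ descends to a map on $\T^d$ homotopic to the identity. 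Hence $\Phi_\kappa$ is a $\C^1$-diffeomorphism of $\R^d$ by the global inverse function theorem (as used already in the proof of Lemma \ref{inverse}, e.g. \cite[Theorem 6.2.4]{KP12}).

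For the regularity of the inverse, I would argue that $\Phi_\kappa^{-1}$ inherits $\C^{\alpha+\beta}$ smoothness by the standard chain-rule bootstrap: from $\nabla\Phi_\kappa^{-1}(y) = (\nabla\Phi_\kappa)^{-1}(\Phi_\kappa^{-1}(y))$, the right-hand side is a composition of $\C^{\alpha+\beta-1}$ maps (the matrix inversion map is smooth on invertible matrices staying a bounded distance from singularity, $\nabla\Phi_\kappa \in \C^{\alpha+\beta-1}$, and $\Phi_\kappa^{-1}$ is Lipschitz hence at least $\C^{0,1}$), so $\nabla\Phi_\kappa^{-1} \in \C^{\alpha+\beta-1}$, i.e. $\Phi_\kappa^{-1}\in\C^{\alpha+\beta}$; one uses here that $\C^\gamma$ is closed under composition and under multiplication/inversion bounded away from $0$, which is the same fact quoted before Lemma \ref{regularity_h}.

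The main obstacle is really just making the quantitative smallness of $\|\nabla\hat b_\kappa\|_0$ transparent — i.e. correctly invoking \eqref{energy4} and checking the sign of the exponent $\frac{\alpha+\beta-1}{\alpha}$ — since everything else (injectivity, invertible Jacobian, surjectivity via properness or periodicity, Hölder regularity of the inverse via composition) is routine once the perturbation is controlled. A minor point to be careful about is that \eqref{energy4} requires $\kappa > \kappa_*$, so "large enough $\kappa$" should be read as $\kappa$ exceeding both $\kappa_*$ and the threshold making $\|\nabla\hat b_\kappa\|_0 \le \tfrac12$; this is consistent, and I would state the lemma's hypothesis accordingly.
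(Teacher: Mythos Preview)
Your proposal is correct and follows essentially the same route as the paper: use the estimate \eqref{energy4} to force $\|\nabla\hat b_\kappa\|_0\le\tfrac12$ for large $\kappa$, deduce that $\Phi_\kappa$ is bi-Lipschitz (hence a $\C^1$-diffeomorphism), and then bootstrap the regularity of the inverse via $\nabla\Phi_\kappa^{-1} = \text{Inv}\circ\nabla\Phi_\kappa\circ\Phi_\kappa^{-1}$. If anything you are slightly more explicit than the paper about surjectivity (via properness or the $\Z^d$-equivariance) and about correctly reading off the factor $(\kappa-\kappa_*)^{-\frac{\alpha+\beta-1}{\alpha}}$ from \eqref{energy4}.
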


To solve SDE \eqref{X_tilde}, by a standard interlacing technique (cf. \cite[Section 6.5]{App09} or \cite[Theorem IV. 9.1]{IW14}), it suffices to solve the following SDE with no jumps greater than one:
\begin{equation*}
  \tilde X^{x,\flat}_t = x+\int_0^t b(\tilde X^{x,\flat}_{s})ds + \int_0^t\int_B \bar\sigma(\tilde X^{x,\flat}_{s-},y) \tilde N^\alpha(dy,ds).
\end{equation*}

Now fix $\kappa>0$ large enough such that the conclusions in Lemma \ref{Phi} hold. We introduce Zvonkin's transform
\begin{equation*}
  \tilde X^{*}_t=\Phi_\kappa(\tilde X^{x,\flat}_t).
\end{equation*}
Then by applying It\^o's formula, we have
\begin{equation}\label{Zvonkin}
  \tilde X^{*}_t = \Phi_\kappa(x) + \int_0^t b^{*}(\tilde X^{*}_{s}) ds + \int_0^t\int_B \bar\sigma^{*}(\tilde X^{*}_{s-},y) \tilde N^\alpha(dy,ds),
\end{equation}
where
\begin{equation*}
  b^{*}(x)=\kappa\hat b_\kappa(\Phi_\kappa^{-1}(x)),
\end{equation*}
\begin{equation*}
  \bar\sigma^{*}(x,y)=\hat b_\kappa(\Phi_\kappa^{-1}(x)+ \bar\sigma(\Phi_\kappa^{-1}(x),y)) - \hat b_\kappa(\Phi_\kappa^{-1}(x)) + \bar\sigma(\Phi_\kappa^{-1}(x),y).
\end{equation*}

\begin{proposition}\label{strong-sol}
  For each $x\in\R^d$, there is a unique strong solution $\tilde X^x=\{\tilde X_t^x\}_{t\ge0}$ to SDE \eqref{X_tilde}.
\end{proposition}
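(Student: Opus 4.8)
The plan is to establish pathwise uniqueness for SDE \eqref{X_tilde} and then invoke the Yamada--Watanabe-type result of \cite{BLP15} to upgrade weak existence (already available from the martingale problem, Lemma \ref{martingale-prob}) plus pathwise uniqueness to strong existence and uniqueness. By the interlacing argument it suffices to work with $\tilde X^{x,\flat}$, the solution with no jumps of size larger than one, and after applying Zvonkin's transform $\tilde X^*_t = \Phi_\kappa(\tilde X^{x,\flat}_t)$ it suffices to prove pathwise uniqueness for the transformed equation \eqref{Zvonkin}. The point of the transform is that the new drift $b^* = \kappa\hat b_\kappa\circ\Phi_\kappa^{-1}$ is Lipschitz, since $\hat b_\kappa\in\C^{\alpha+\beta}$ and $\Phi_\kappa^{-1}\in\C^{\alpha+\beta}$ by Lemma \ref{Phi}, and the new jump coefficient $\sigma^*$ inherits a Lipschitz-in-$x$ estimate of the form $|\sigma^*(x_1,y)-\sigma^*(x_2,y)|\le C|x_1-x_2||y|$ from the corresponding property of $\sigma$ in Assumption \ref{sigma}(1) together with $\|\nabla\hat b_\kappa\|_0\le\tfrac12$ and the bi-Lipschitz bound on $\Phi_\kappa^{-1}$.

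Concretely, first I would record the Lipschitz estimates: $|b^*(x_1)-b^*(x_2)|\le C|x_1-x_2|$ and, using the chain of inequalities
\begin{equation*}
  |\sigma^*(x_1,y)-\sigma^*(x_2,y)| \le \|\nabla\hat b_\kappa\|_0\big(|\Phi_\kappa^{-1}(x_1)-\Phi_\kappa^{-1}(x_2)| + |\sigma(\Phi_\kappa^{-1}(x_1),y)-\sigma(\Phi_\kappa^{-1}(x_2),y)|\big) + |\sigma(\Phi_\kappa^{-1}(x_1),y)-\sigma(\Phi_\kappa^{-1}(x_2),y)|,
\end{equation*}
a bound $|\sigma^*(x_1,y)-\sigma^*(x_2,y)|\le C|x_1-x_2||y|$. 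Then, given two solutions $\tilde X^{*,1},\tilde X^{*,2}$ of \eqref{Zvonkin} with the same initial point and driven by the same $L^\alpha$, I would set $\Delta_t = \tilde X^{*,1}_t - \tilde X^{*,2}_t$ and estimate $\E[\,|\Delta_{t\wedge\tau_n}|\,]$ (or $\E[\,|\Delta_{t\wedge\tau_n}|^p\,]$ for a suitable $p\in(1,\alpha)$, localizing by stopping times $\tau_n$ if needed to control moments) using It\^o's formula for the compensated Poisson integral. The drift term contributes $\int_0^t C\,\E|\Delta_s|\,ds$; the martingale term, after taking the compensator, contributes $\int_0^t\!\int_B C(|\Delta_s|^p\wedge(|\Delta_s||y|)^p)\,\nu^\alpha(dy)\,ds$, and since $\int_B|y|^p\nu^\alpha(dy)<\infty$ for $p>\alpha$ while $\int_{B^c}$ has been removed, one gets $\E[\,|\Delta_{t\wedge\tau_n}|^p\,]\le C\int_0^t\E[\,|\Delta_{s\wedge\tau_n}|^p\,]\,ds$; Gronwall's lemma then forces $\Delta\equiv 0$, i.e., pathwise uniqueness for \eqref{Zvonkin}. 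Pulling back through the $\C^1$-diffeomorphism $\Phi_\kappa$ gives pathwise uniqueness for $\tilde X^{x,\flat}$, and interlacing gives it for $\tilde X^x$.

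The main technical obstacle is handling the jump term in the uniqueness estimate, because $\sigma^*$ is only Lipschitz in $x$ (with a factor $|y|$) and the driving noise is $\alpha$-stable with $\alpha<2$, so the naive $L^2$ argument fails near the origin in $y$ and one must work in $L^p$ with $1<p<\alpha$ — or more delicately use a distance function adapted to the stable scaling — while simultaneously ensuring that the moment $\E\sup_{s\le t}|\tilde X^{*,i}_s|^p$ needed to justify It\^o's formula and the boundedness of the compensator integrals is finite; the growth estimate \eqref{moment} and the periodicity (so the processes live on $\T^d$ and are automatically bounded) are what make this step go through. A secondary point requiring care is verifying that the coefficients $b^*,\sigma^*$ of the Zvonkin-transformed equation are genuinely the ones written, which is a direct but somewhat lengthy application of It\^o's formula to $\Phi_\kappa(\tilde X^{x,\flat}_t)$ together with the identity $\kappa\hat b_\kappa - \L^{\alpha,\flat}\hat b_\kappa = b$; I would state this and defer the routine verification.
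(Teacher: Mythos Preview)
Your overall strategy (weak existence via the martingale problem, reduce to pathwise uniqueness via \cite{BLP15}, interlacing, Zvonkin's transform) matches the paper's, but the Lipschitz estimate you write for $\sigma^*$ is wrong, and this is exactly the crux of the proof. Your chain of inequalities yields a term $\|\nabla\hat b_\kappa\|_0\,|\Phi_\kappa^{-1}(x_1)-\Phi_\kappa^{-1}(x_2)|$ which is of order $|x_1-x_2|$ with \emph{no} factor of $|y|$; hence the bound you claim, $|\sigma^*(x_1,y)-\sigma^*(x_2,y)|\le C|x_1-x_2||y|$, does not follow. The naive triangle inequality only gives $C|x_1-x_2|(1+|y|)$, and the constant term makes $\int_{\Bo}|\sigma^*(x_1,y)-\sigma^*(x_2,y)|^p\,\nu^\alpha(dy)$ diverge for every $p>0$. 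Your proposed $L^p$ workaround with $p\in(1,\alpha)$ does not help either, since then $\int_{\Bo}|y|^p\,\nu^\alpha(dy)=\infty$ as well; your own parenthetical ``$\int_B|y|^p\nu^\alpha(dy)<\infty$ for $p>\alpha$'' contradicts the choice $p<\alpha$.

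The missing idea is to exploit the $\C^{\alpha+\beta}$ regularity of $\hat b_\kappa$ (not just $\|\nabla\hat b_\kappa\|_0$) via the second-order double-difference estimate: for $f\in\C_b^{1+\gamma}$, $|f(u+z)-f(u)-f(v+z)+f(v)|\le C\|f\|_{1+\gamma}|u-v||z|^\gamma$ (cf.\ \cite[Theorem 5.1(c)]{Bas09}). Applying this with $f=\hat b_\kappa$, $\gamma=\alpha+\beta-1$, $z=\sigma(\Phi_\kappa^{-1}(x_1),y)$, and then handling the remaining pieces by the ordinary Lipschitz bounds, the paper obtains
\[
  |\sigma^*(x_1,y)-\sigma^*(x_2,y)|\le C|x_1-x_2|\big(|y|^{\alpha+\beta-1}+|y|\big).
\]
Since Assumption~\ref{regular} forces $\beta>1-\alpha/2$, one has $2(\alpha+\beta-1)>\alpha$, so $\int_{\Bo}|\sigma^*(x_1,y)-\sigma^*(x_2,y)|^2\,\nu^\alpha(dy)\le C|x_1-x_2|^2$, and the \emph{classical} $L^2$ pathwise-uniqueness result of Ikeda--Watanabe applies directly---no delicate $L^p$ or stable-scaling argument is needed.
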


\begin{proof}
  By the above argument, we only need to prove the pathwise uniqueness for SDE \eqref{Zvonkin}. First of all, we have, for any $x,x_1,x_2\in\R^d$,
  \begin{equation}\label{b*}
    \big|b^{*}(x_1)-b^{*}(x_2)\big| \le C(\|\hat b_\kappa\|_1,\|\Phi_\kappa^{-1}\|_1)|x_1-x_2|,
  \end{equation}
  Note that for $\gamma\in(0,1)$, $f\in\C_b^{1+\gamma}(\R^d)$, $x,u,v\in\R^d$, there exists a constant $C>0$ such that
  \begin{equation*}
    |f(u+x)-f(u)-f(v+x)-f(v)| \le C \|f\|_{1+\gamma} |u-v||x|^\gamma,
  \end{equation*}
  the proof can be found in \cite[Theorem 5.1.(c)]{Bas09}. Then for any $x_1,x_2$,
  \begin{equation*}
    \begin{split}
       &\ \left|\bar\sigma^{*}(x_1,y)- \bar\sigma^{*}(x_2,y)\right| \\
       \le&\ \big|\hat b_\kappa(\Phi_\kappa^{-1}(x_1)+ \bar\sigma(\Phi_\kappa^{-1}(x_1),y)) - b_\kappa(\Phi_\kappa^{-1}(x_1)) \\
         &\ - \hat b_\kappa(\Phi_\kappa^{-1}(x_2)+ \bar\sigma(\Phi_\kappa^{-1}(x_1),y)) + b_\kappa(\Phi_\kappa^{-1}(x_2))\big| \\
         &\ + \big|\hat b_\kappa(\Phi_\kappa^{-1}(x_2)+ \bar\sigma(\Phi_\kappa^{-1}(x_1),y)) - \hat b_\kappa(\Phi_\kappa^{-1}(x_2)+ \bar\sigma(\Phi_\kappa^{-1}(x_2),y))\big| \\
         &\ + \big|\bar\sigma(\Phi_\kappa^{-1}(x_1),y)- \bar\sigma(\Phi_\kappa^{-1}(x_2),y)\big| \\
         \le&\ C \|\hat b_\kappa\|_{\alpha+\beta} \left| \Phi_\kappa^{-1}(x_1)-\Phi_\kappa^{-1}(x_2) \right| \left| \bar\sigma(\Phi_\kappa^{-1}(x_1),y) \right|^{\alpha+\beta-1} \\
         &\ +(\|\nabla \hat b_\kappa\|_0+1)\left| \bar\sigma(\Phi_\kappa^{-1}(x_1),y)- \bar\sigma(\Phi_\kappa^{-1}(x_2),y) \right| \\
         \le&\ C \left(\|\hat b_\kappa\|_{\alpha+\beta}, \|\Phi_\kappa^{-1}\|_1,  \|\phi\|_{L^\infty} \right) |x_1-x_2|(|y|^{\alpha+\beta-1}+|y|).
    \end{split}
  \end{equation*}
  where we have used the regularity condition for $\bar\sigma$ in Assumption \ref{sigma}, and $\phi$ is the positive bounded function in the growth condition in that assumption. Noting that $2(\alpha+\beta-1)>\alpha$ by Assumption \ref{regular}, we arrive at
  \begin{equation}\label{sigma*}
    \int_B \left|\bar\sigma^{*}(x_1,y)- \bar\sigma^{*}(x_2,y)\right|^2 \nu_\alpha(dy) \le C\left(\|\hat b_\kappa\|_{\alpha+\beta}, \|\Phi_\kappa^{-1}\|_1,  \|\phi\|_{L^\infty} \right)|x_1-x_2|^2.
  \end{equation}
  The pathwise uniqueness of SDE \eqref{Zvonkin} follows from \eqref{b*}, \eqref{sigma*} and the classical result \cite[Theorem 4.9.1]{IW14}. The proof is complete.
\end{proof}

The following corollary is just a summarizing of above results for the process $\tilde X^x$, the proof is left into Appendix \ref{app}.
\begin{corollary}\label{strong-Markov}
  The solution process $\tilde X^x$
  is a Feller process with generator the closure of $(\L^\alpha,\C^{\alpha+\beta}(\T^d))$.
  In particular, $\tilde X^x$ is a strong Markov process.
\end{corollary}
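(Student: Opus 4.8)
The final statement to prove is the corollary asserting that the strong solution $\tilde X^x$ of SDE \eqref{X_tilde} is a Feller process whose generator is the closure of $(\L^\alpha,\C^{\alpha+\beta}(\T^d))$, and is therefore strong Markov.

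The plan is to identify the law of the strong solution with the unique solution of the martingale problem for $(\L^\alpha,\delta_x)$ constructed in Lemma \ref{martingale-prob}, and then transfer the Feller and strong Markov properties already established there. First I would recall that, by It\^o's formula applied to $f(\tilde X^x_t)$ for $f\in\C^{\alpha+\beta}(\T^d)$ — which is legitimate by the moment bound \eqref{moment} in Remark \ref{rem_sigma}(4), since $\alpha+\beta>\alpha$ — the process $M^f_t=f(\tilde X^x_t)-f(x)-\int_0^t\L^\alpha f(\tilde X^x_s)\,ds$ is a martingale; here one uses the equivalent representation \eqref{A_rewrite} of $\A^{\sigma,\nu^\alpha}$ together with the compensation formula for the Poisson random measure. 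Hence the law of $\tilde X^x$ on $\D(\R_+;\T^d)$ solves the martingale problem for $(\L^\alpha,\delta_x)$. By the uniqueness half of Lemma \ref{martingale-prob} this law must coincide with $\P^x$. Consequently the family $\{\tilde X^x\}_{x\in\T^d}$, viewed as a family of probability measures on path space, is exactly $\{\P^x\}$, which is a Feller process with generator the closure of $(\L^\alpha,\C^{\alpha+\beta}(\T^d))$; in particular it is strong Markov (a Feller process always is, cf. \cite[Theorem 4.2.7]{EK09}).

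I would then make explicit the one subtlety: Lemma \ref{martingale-prob} deals with the $\T^d$-valued (wrapped) process, whereas SDE \eqref{X_tilde} lives on $\R^d$; but, as already noted after \eqref{X_tilde}, the periodicity of $b$ and of $\sigma(\cdot,y)$ means that the push-forward of $\tilde X^x$ under the quotient map $\pi:\R^d\to\T^d$ is a well-defined process whose generator, acting on periodic test functions, is again $\L^\alpha$, so the identification above is carried out at the level of the torus. Finally, the Markov property of the $\T^d$-valued process, combined with the fact that the strong solution of \eqref{X_tilde} is a measurable functional of $x$ and the driving noise (pathwise uniqueness giving the cocycle/flow identity $\tilde X^x_{t+s}=\tilde X^{\tilde X^x_s}_t\circ\theta_s$), transfers back to give the Markov and strong Markov property on $\R^d$ as well.

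The only real obstacle is the routine but slightly technical verification that $\tilde X^x$ solves the martingale problem, i.e.\ the correct application of It\^o's formula with the compensated small-jump integral and the uncompensated large-jump integral producing exactly the operator $\L^\alpha$ in the form \eqref{L}; this is where one must be careful to match the $\ind_B$ cut-off in the definition \eqref{A} of $\A^{\sigma,\nu^\alpha}$ with the compensator of $N^\alpha$, using the oddness relation \eqref{trans}. Everything else — uniqueness, Feller property, heat kernel, strong Markov — is quoted directly from Lemma \ref{martingale-prob} and standard Feller theory.
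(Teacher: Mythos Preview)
Your proposal is correct and follows essentially the same route as the paper: apply It\^o's formula to identify the law of $\tilde X^x$ with the unique martingale-problem solution $\P^x$ of Lemma \ref{martingale-prob}, then read off the Feller and strong Markov properties. The paper's proof is terser (it omits your remarks on the $\R^d$ versus $\T^d$ identification and the flow property, and cites \cite[Theorem III.3.1]{RY13} rather than Ethier--Kurtz for the strong Markov step), but the argument is the same.
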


\begin{remark}
  The Feller semigroup $\{P_t\}_{t\ge0}$ in Lemma \ref{Feller} is the semigroup associated with the solution process $\tilde X^x$, that is,
  \begin{equation*}
    P_tf(x)=\E(f(\tilde X^x_t)), \quad f\in \C(\T^d).
  \end{equation*}
\end{remark}

As a consequence of the Feller property, we can obtain the well-posedness of the parabolic nonlocal PDE and the corresponding Feynman-Kac representation. See \cite{PR14} for the classical version for second order PDE.
\begin{proposition}\label{wellposed-parabolic}
  The parabolic nonlocal PDE
  \begin{equation*}
    \begin{cases}
      \frac{\partial u}{\partial t}(t,x) = \L^\alpha u(t,x) +g(x) u(t,x), & t>0,x\in\R^d, \\
      u(0,x) = u_0(x), & x\in\R^d,
    \end{cases}
  \end{equation*}
  admits a unique mild solution in the sense that $\int_0^t u(s) ds\in D(\L^\alpha)$ for all $t\ge0$ and
  \begin{equation*}
    u(t) = u_0 +\L^\alpha \int_0^t u(s) ds + g\int_0^t u(s) ds.
  \end{equation*}
  Moreover, the unique solution has the following Feynman-Kac representation
  \begin{equation*}
    u(t,x)=\E\left[u_0(\tilde X^x_t)\exp\left(\int_0^t g(\tilde X^x_s) ds\right)\right].
  \end{equation*}
\end{proposition}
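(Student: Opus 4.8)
\textbf{Proof plan for Proposition \ref{wellposed-parabolic}.}

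The plan is to exploit the Feller semigroup $\{P_t\}_{t\ge0}$ generated by the closure of $(\L^\alpha,\C^{\alpha+\beta}(\T^d))$ and treat the zeroth-order term $g$ as a bounded multiplicative perturbation. Since $g$ is continuous on $\T^d$, the multiplication operator $M_g f := gf$ is bounded on $\C(\T^d)$, so by the bounded perturbation theorem (see e.g. \cite[Theorem III.1.3]{EN00}) the operator $\L^\alpha + M_g$ (with the same domain $\C^{\alpha+\beta}(\T^d)$, closure included) generates a strongly continuous semigroup $\{T_t\}_{t\ge0}$ on $\C(\T^d)$. The mild solution is then $u(t) = T_t u_0$, and the stated integrated identity $u(t) = u_0 + (\L^\alpha + M_g)\int_0^t u(s)\,ds$ is exactly the standard characterization of $T_t u_0$ as a mild solution: one integrates the abstract Cauchy problem and uses that $\int_0^t u(s)\,ds$ lies in the domain of the generator. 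Uniqueness follows because any mild solution coincides with $T_t u_0$ by the uniqueness part of the abstract theory.

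Next I would establish the Feynman-Kac representation. Fix $f = u_0 \in \C(\T^d)$ and define $v(t,x) := \E[u_0(\tilde X^x_t)\exp(\int_0^t g(\tilde X^x_s)\,ds)]$. The natural route is to write down the Dyson-Phillips / Duhamel series for $T_t$,
\begin{equation*}
  T_t f = \sum_{n=0}^\infty \int_{0\le s_1\le\cdots\le s_n\le t} P_{t-s_n} M_g P_{s_n-s_{n-1}} M_g \cdots M_g P_{s_1} f\, ds_1\cdots ds_n,
\end{equation*}
which converges in operator norm since $\|M_g\|\le\|g\|_0$ and $\|P_t\|\le 1$. Using $P_t f(x) = \E[f(\tilde X^x_t)]$ together with the Markov property of $\tilde X^x$, each term of this series is recognized as a term in the power-series expansion of $v(t,x)$ obtained by expanding $\exp(\int_0^t g(\tilde X^x_s)\,ds) = \sum_n \frac{1}{n!}(\int_0^t g\,ds)^n$ and symmetrizing the simplex integral. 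Matching the two series term by term gives $u(t,x) = v(t,x)$. Alternatively, and perhaps more cleanly, one verifies directly that $v$ satisfies the Duhamel integral equation $v(t,x) = P_t u_0(x) + \int_0^t P_{t-s}(g\, v(s,\cdot))(x)\,ds$ by conditioning on $\F_s$ inside the expectation and using $e^{\int_0^t g} = e^{\int_0^s g} + \int_s^t g(\tilde X^x_r) e^{\int_r^t g}\,dr$ (or the analogous first-jump-time decomposition of the exponential functional); uniqueness of solutions to this linear Volterra equation in $\C([0,T];\C(\T^d))$ then forces $v = u$.

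I expect the main obstacle to be the rigorous justification of the Feynman-Kac identity rather than the semigroup-generation step, which is routine. Specifically, one must be careful that $v(t,\cdot)\in\C(\T^d)$ for each $t$ (this uses the Feller property of $\tilde X^x$ and dominated convergence, with the uniform bound $|\exp(\int_0^t g(\tilde X^x_s)\,ds)|\le e^{t\|g\|_0}$), that the map $t\mapsto v(t,\cdot)$ is continuous into $\C(\T^d)$, and that the conditioning/Markov-property manipulation is valid — here the strong Markov property of $\tilde X^x$ established in the preceding corollary does the job. Once the Duhamel equation for $v$ is in hand, the identification $v=u=T_t u_0$ is immediate from the uniqueness of its solution, and the integrated PDE identity transfers to the Feynman-Kac representation.
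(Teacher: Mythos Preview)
Your proposal is correct and complete, but the route differs from the paper's. You proceed analytically first: the bounded perturbation theorem gives a $C_0$-semigroup $\{T_t\}$ generated by $\overline{\L^\alpha}+M_g$, and then you identify $T_t u_0$ with the Feynman--Kac expectation via the Dyson--Phillips series (or the equivalent Duhamel fixed-point argument). The paper goes in the opposite direction: it \emph{starts} from the probabilistic expression, defining $P_t^g f(x):=\E[f(\tilde X^x_t)\exp(\int_0^t g(\tilde X^x_s)\,ds - Gt)]$ with $G>\|g\|_0$, shows directly (following \cite[Section~6.7.2]{App09}) that $\{P_t^g\}$ is a Feller semigroup whose generator is the closure of $\L^\alpha+g-G$, and then simply rescales by $e^{Gt}$ to obtain the $C_0$-semigroup generated by $\overline{\L^\alpha}+g$; the mild-solution statement is read off from \cite[Proposition~II.6.4]{EN00}. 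The paper's approach buys the Feynman--Kac identity ``for free'' since the semigroup is built from it, at the cost of verifying the semigroup and generator properties by hand; your approach outsources the semigroup existence to an off-the-shelf perturbation theorem but then has to do the series/Duhamel matching to recover the probabilistic formula. Both are standard and equally rigorous; the paper's is marginally shorter because the Applebaum reference absorbs most of the work.
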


\begin{proof}
  Choose $G>0$ large enough such that $\|g\|_0<G$. Define
  \begin{equation*}
    P_t^g f(x)=\E\left[f(\tilde X^x_t)\exp\left(\int_0^t g(\tilde X^x_s) ds- Gt\right)\right], \quad f\in \C(\T^d).
  \end{equation*}
  Then by an argument similar to that used in \cite[Section 6.7.2]{App09}, one can show that $\{P_t^g\}_{t\ge0}$ is a Feller semigroup with generator the closure of $(\L^\alpha+g-G,\C^{\alpha+\beta}(\T^d))$. This yields that $\{e^{Gt}P_t^g\}_{t\ge0}$ is a $C_0$-semigroup on $\C(\T^d)$ with generator the closure of $(\L^\alpha+g,\C^{\alpha+\beta}(\T^d))$. Note that $u_0\in\C(\T^d)$. Now applying the classic result \cite[Proposition II.6.4]{EN00} in the theory of $C_0$-semigroups, we conclude that the parabolic nonlocal PDE admits a unique mild solution, which can be given by the orbit map $u(t)=e^{Gt}P_t^g u_0$. The desired conclusions follow immediately.
\end{proof}

\subsection{Invariance and ergodicity}

Now we deal with the invariant measures and ergodicity of SDEs. From now on, Assumptions \ref{coef1}-\ref{scaling} are set in force.

By the discussion in previous subsection, SDE \eqref{Xe} and SDE \eqref{Xe_tilde} for each $\e\ge0$, admit the unique strong solutions $\tilde X^{x}$ and $\tilde X^{x,\e}$, which are all $\T^d$-valued Feller processes. Denote by $p^\epsilon(t;x,y)$ the transition probability density of $\tilde X^{x,\epsilon}$, by $\{P^\epsilon_t\}_{t\ge 0}$ the associated Feller semigroup. Recall that we denote $\tilde X^{x,0}:=\tilde X^x$ for notational simplicity.

\begin{lemma}\label{inv}
  For each $0\le\epsilon\le1$, the process $\tilde X^{x,\epsilon}$ possesses a unique invariant distribution $\mu_\epsilon$ on $\T^d$. Moreover, there exist positive constants $C$ and $\rho$, depending only on $d,\alpha, \|b\|_0,\|c\|_0, h_0,h_1$, such that for any periodic bounded Borel function $f$ on $\R^d$ (i.e., $f$ is Borel bounded on $\T^d$),
  \begin{equation*}
    \sup_{x\in\T^d} \left| P^\e_t f(x)-\int_{\T^d}f(y)\mu_\epsilon(dy) \right| \le C\|f\|_0e^{-\rho t}
  \end{equation*}
  for every $t\ge0$.
\end{lemma}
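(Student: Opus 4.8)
The plan is to establish uniform ergodicity for the family $\{\tilde X^{x,\e}\}_{0\le\e\le1}$ by verifying a Doeblin-type minorization condition for the transition kernels, with constants independent of $\e$. The crucial input is the two-sided heat kernel estimate from Lemma \ref{martingale-prob}, which by Remark \ref{rep-Feller}(2) holds uniformly for all the generators $\tilde\L^\alpha_\e$ (note that the family of functions $h$ and the drifts $b+\e^{\alpha-1}c$ satisfy the hypotheses of Lemma \ref{regularity_h} with constants uniform in $\e\in[0,1]$, since $\|b+\e^{\alpha-1}c\|_0\le\|b\|_0+\|c\|_0$). First I would fix a time, say $t_0=1$, and use the lower bound
\begin{equation*}
  p^\e(1;x,y)\ge C_1^{-1}\sum_{j\in\Z^d}\left(\frac{1}{|x-y+j|^{d+\alpha}}\wedge 1\right)\ge \delta
\end{equation*}
for all $x,y\in\T^d$, where $\delta>0$ depends only on $d,\alpha,\|b\|_0,\|c\|_0,h_0,h_1$ and not on $\e$. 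Since $\T^d$ has total Lebesgue measure $1$, this gives the Doeblin condition $P^\e_1(x,\cdot)\ge\delta\,\lambda(\cdot)$ where $\lambda$ is normalized Lebesgue measure on $\T^d$.

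Next I would invoke the standard consequence of the Doeblin condition: the operator $P^\e_1$ on $(\C(\T^d),\|\cdot\|_0)$ (equivalently on signed measures) is a contraction in total variation with coefficient $1-\delta$; more precisely, for any two probability measures $\mu,\nu$ on $\T^d$, $\|\mu P^\e_1-\nu P^\e_1\|_{TV}\le(1-\delta)\|\mu-\nu\|_{TV}$. By the Banach fixed point theorem applied to the complete metric space of probability measures on the compact space $\T^d$ under the total variation metric, $P^\e_1$ has a unique fixed point $\mu_\e$; by uniqueness and the semigroup property $\mu_\e$ is invariant for the whole semigroup $\{P^\e_t\}_{t\ge0}$, and it is the unique invariant distribution. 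Iterating the contraction, for $t=n+s$ with $n\in\N$ and $s\in[0,1)$,
\begin{equation*}
  \sup_{x\in\T^d}\left|P^\e_t f(x)-\int_{\T^d}f\,d\mu_\e\right|
  \le \|f\|_0\,(1-\delta)^n
  \le \|f\|_0\,(1-\delta)^{-1}e^{-\rho t},
\end{equation*}
with $\rho=-\log(1-\delta)>0$, using that $P^\e_s$ is a contraction on $\C(\T^d)$ and preserves $\mu_\e$. This gives exactly the claimed exponential decay with $C=(1-\delta)^{-1}$ and $\rho$ depending only on the stated quantities.

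The main obstacle, and the point requiring care rather than deep ideas, is the uniformity in $\e$ of the minorization constant $\delta$. This hinges on checking that the constants $C_1,h_0,h_1$ appearing in Lemmas \ref{regularity_h} and \ref{martingale-prob} can indeed be chosen uniformly over $\e\in[0,1]$: under Assumption \ref{scaling} the nonlocal part $\A^{\sigma,\nu^\alpha}$ of $\tilde\L^\alpha_\e$ does not depend on $\e$ at all, so the kernel $h$ and hence $h_0,h_1$ are fixed, while the only $\e$-dependence is in the bounded drift $b+\e^{\alpha-1}c$, whose sup-norm is bounded by $\|b\|_0+\|c\|_0$ uniformly; since the heat kernel bounds in \cite{Jin17} depend on the drift only through its sup-norm, $C_1$ is uniform. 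One must also note that the lower heat-kernel bound is genuinely strictly positive on the compact torus for a fixed time (the sum over $j\in\Z^d$ of $(|x-y+j|^{-d-\alpha}\wedge1)$ is bounded below by its $j=0$ term on a fundamental domain, staying away from zero), which is immediate. I would present the Doeblin/total-variation contraction argument concisely, citing a standard reference (e.g.\ for the coupling inequality), since it is classical once the uniform minorization is in hand.
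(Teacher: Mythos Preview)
Your proposal is correct and follows essentially the same approach as the paper: both verify a uniform Doeblin minorization for $p^\e(1;x,y)$ using the lower heat-kernel bound from Lemma \ref{martingale-prob}, observe that under Assumption \ref{scaling} only the drift $b+\e^{\alpha-1}c$ depends on $\e$ (with sup-norm bounded by $\|b\|_0+\|c\|_0$), and then invoke the standard Doeblin contraction to obtain the unique invariant measure and exponential convergence with $\e$-independent constants. The paper simply cites \cite[Theorem 3.3.1, 3.3.2]{BLP78} for the Doeblin step, whereas you spell out the total-variation contraction and the resulting constants explicitly.
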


\begin{proof}
  One can find a version of Doeblin-type result in \cite[Theorem 3.3.1, 3.3.2]{BLP78}, which states that for a Markov process with transition probability densities bounded from below by a positive constant, it has a unique invariant probability measure and the associated semigroup converges exponentially fast. Therefore, it is enough to ensure that the transition probability density $p^\e(1;x,y)$ is bounded from below by a positive constant, which follows immediately from the density estimates in Lemma \ref{martingale-prob}.
  Moreover, one can also realize from \cite[Theorem 3.3.1, 3.3.2]{BLP78} that the two constants $C$ and $\rho$ are only related to the lower bound of $p^\e(1;x,y)$. The only thing left is to verify the dependence of the constants $C$ and $\rho$. In particular, they can be chosen uniformly in $\e\in[0,1]$.

  In the case $\e=0$, the lower bound of $p(1;x,y)$ depends on $d,\alpha, \|b\|_0, \bar h_0,\bar h_1$ by Lemma \ref{martingale-prob}, where $\bar h_0$ and $\bar h_1$ are the constants associated to $\bar\sigma$ as indicated in Lemma \ref{regularity_h} and Remark \ref{sigma-conv}. We turn to the case that $\e\in(0,1]$. Since the generator of each semigroup $\{P^\e_t\}$ is $\tilde\L^\alpha_\e=\A^{\frac{1}{\e}\sigma_\e,\nu^\alpha}+ (b+\e^{\alpha-1}c)\cdot\nabla$ as in \eqref{L-e}, where $\sigma_\epsilon(x,y):=\sigma\left(\frac{x}{\e},y\right)$, it is easy to derive that the function $h^\e$ associated with $\frac{1}{\e}\sigma_\epsilon$ as in \eqref{h} is given by
  \begin{equation*}
    h_\e(x,z)=|\det \nabla_z\tau(x,z)| \frac{|\e z|^{d+\alpha}}{|\tau(x,\e z)|^{d+\alpha}},
  \end{equation*}
  Hence, the constants $h^\e_0$ and $h^\e_1$ associated to $h_\e$ as in Lemma \ref{regularity_h} can be chosen to equal to $h_0$ and $h_1$, respectively. We conclude that when $\e\in(0,1]$, the constant $C_1$ associated to $p^\e(t;x,y)$ in Lemma \ref{martingale-prob} are related to $d,\alpha, \|b+\e^{\alpha-1}c\|_0, h_0,h_1$. Hence, for $\e\in[0,1]$, constants $C$ and $\rho$ can be chosen to depend only on $d,\alpha, \|b\|_0,\|c\|_0, h_0,h_1,\bar h_0,\bar h_1$. In particular, they are independent of $\e\in[0,1]$.
\end{proof}

%


Denote by $\mu=\mu_0$ the unique invariant probability measure for the limit process $\tilde X^x_t$ in \eqref{X_tilde}. Then we can prove the following lemma.

\begin{lemma}\label{conv_mu}
  As $\epsilon\to0$, we have $\mu_\epsilon\to\mu$ weakly.
\end{lemma}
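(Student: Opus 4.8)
The plan is to exploit the exponential ergodicity uniform in $\e$ (Lemma \ref{inv}) together with the stability of the solution processes $\tilde X^{x,\e}$ as $\e\to0$. The key observation is that $\tilde X^{x,\e}$ solves an SDE whose coefficients converge to those of the limit equation \eqref{X_tilde}: the drift $b+\e^{\alpha-1}c\to b$ uniformly since $\|c\|_0<\infty$ and $\alpha>1$, while the jump coefficient $\sigma$ does not change at all under Assumption \ref{scaling} (recall that after imposing the scaling assumption, the noise term in \eqref{Xe_tilde} is literally $\sigma(\tilde X^{x,\e}_{t-},d\tilde L^\alpha_t)$, with no $\e$). So morally $\tilde X^{x,\e}$ should converge to $\tilde X^x$ and hence the invariant measures should converge.

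First I would make the convergence of the processes precise at the level of semigroups: for a fixed $f\in\C^{\alpha+\beta}(\T^d)$ and fixed $t>0$, I claim $P^\e_t f\to P_t f$ in $\C(\T^d)$ as $\e\to0$. One clean way is via the resolvents. Fix $\kappa>0$ large. By Proposition \ref{res_prob} and Remark \ref{rep-Feller}(2), for $g\in\C^\beta(\T^d)$ let $u^\e_\kappa$ solve $\kappa u^\e_\kappa-\tilde\L^\alpha_\e u^\e_\kappa=g$ and $u_\kappa$ solve $\kappa u_\kappa-\L^\alpha u_\kappa=g$. Then $\tilde\L^\alpha_\e-\L^\alpha=\e^{\alpha-1}c\cdot\nabla$, so
\begin{equation*}
  \kappa(u^\e_\kappa-u_\kappa)-\L^\alpha(u^\e_\kappa-u_\kappa)=\e^{\alpha-1}c\cdot\nabla u^\e_\kappa.
\end{equation*}
The uniform Schauder estimate \eqref{energy3} (which, as noted in Remark \ref{rep-Feller}(2), holds for $\tilde\L^\alpha_\e$ with constants independent of $\e\in[0,1]$) bounds $\|\nabla u^\e_\kappa\|_0$ uniformly; the right-hand side is therefore $O(\e^{\alpha-1})$ in $\C^\beta$ (after controlling $\|c\cdot\nabla u^\e_\kappa\|_\beta$ by $\|c\|_\beta\|u^\e_\kappa\|_{1+\beta}\le\|c\|_\beta\|u^\e_\kappa\|_{\alpha+\beta}$, again uniformly bounded), so by \eqref{energy3} again $\|u^\e_\kappa-u_\kappa\|_0\to0$. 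Thus the resolvents $R(\kappa,\tilde\L^\alpha_\e)g\to R(\kappa,\L^\alpha)g$ for all $g\in\C^\beta(\T^d)$, hence for all $g\in\C(\T^d)$ by density and the uniform bound $\|R(\kappa,\tilde\L^\alpha_\e)\|\le\kappa^{-1}$. By the Trotter--Kato approximation theorem (e.g. \cite[Theorem III.4.8]{EN00}) this gives $P^\e_t f\to P_t f$ in $\C(\T^d)$, uniformly on compact $t$-intervals, for every $f\in\C(\T^d)$.

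Having this, I would finish by a triangle-inequality argument. Fix $f\in\C(\T^d)$ and $\delta>0$. Choose $t$ large enough that, by Lemma \ref{inv} with its $\e$-uniform constants $C,\rho$, both $|P^\e_t f(x)-\int f\,d\mu_\e|\le\tfrac{\delta}{3}$ for all $\e\in[0,1]$, all $x$, and $|P_t f(x)-\int f\,d\mu|\le\tfrac{\delta}{3}$. Then fix some $x_0\in\T^d$ and choose $\e$ small enough that $|P^\e_t f(x_0)-P_t f(x_0)|\le\tfrac{\delta}{3}$, using the semigroup convergence just established. Combining,
\begin{equation*}
  \left|\int_{\T^d} f\,d\mu_\e-\int_{\T^d} f\,d\mu\right|\le\delta,
\end{equation*}
which, since $f\in\C(\T^d)$ was arbitrary, is exactly weak convergence $\mu_\e\to\mu$. (Alternatively, since $\T^d$ is compact, $\{\mu_\e\}$ is automatically tight, so one may extract weakly convergent subsequences and identify every limit point as an invariant measure of $\tilde X^x$, which is unique by Lemma \ref{inv}; but the quantitative route above is cleaner and avoids subsequences.) The main obstacle is the first step — making the semigroup/resolvent convergence rigorous with estimates uniform in $\e$ — and the crucial input there is precisely that the Schauder estimates of Corollary \ref{precise-estimates} and Remark \ref{rep-Feller}(2) hold with $\e$-independent constants, so that the perturbation $\e^{\alpha-1}c\cdot\nabla$ is genuinely small in the right norm.
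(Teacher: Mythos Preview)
Your proof is correct and follows essentially the same strategy as the paper: establish semigroup convergence $P^\e_t f\to P_t f$ via Trotter--Kato, then deduce weak convergence of the invariant measures. The differences are in execution rather than in substance. For the first step, the paper takes the shorter generator route: since $\tilde\L^\alpha_\e-\L^\alpha=\e^{\alpha-1}c\cdot\nabla$, one has directly $\|\tilde\L^\alpha_\e f-\L^\alpha f\|_0\le\e^{\alpha-1}\|c\|_0\|f\|_1\to0$ for every $f$ in the common core $\C^{\alpha+\beta}(\T^d)$, and Trotter--Kato applies immediately; your resolvent argument reaches the same conclusion but through the more laborious Schauder machinery. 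For the second step, the paper cites the subsequence/tightness argument of \cite[Lemma~2.4]{HP08} (noting tightness is free on $\T^d$), whereas your triangle-inequality argument exploiting the $\e$-uniform constants in Lemma~\ref{inv} is more self-contained and arguably cleaner --- you even mention the paper's alternative in your parenthetical. So: same backbone, with your first step slightly heavier than needed and your second step slightly more explicit than the paper's.
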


\begin{proof}
  Using the same argument as the proof of \cite[Lemma 2.4]{HP08}, and noting that the tightness of the family $\{\mu_\e\}_{\e>0}$ is automatic due to the compactness of $\T^d$, it suffices to prove that $P_t^\e f\to P_tf$ in $\C(\T^d)$ as $\e\to 0$ for any $f\in\C(\T^d)$ and $t\ge0$. By Lemma \ref{martingale-prob} and Remark \ref{rep-Feller}.(2), we know that $\C^{\alpha+\beta}(\T^d)$ is a core for $\L^\alpha$ and each $\tilde\L^\alpha_\e$, $\e>0$. Fix an arbitrary $f\in\C^{\alpha+\beta}(\T^d)$,
  \begin{equation*}
    \begin{split}
       |\tilde\L^\alpha_\e &f(x)-\L^\alpha f(x)| \le \e^{\alpha-1} \left| c(x)\cdot\nabla f(x) \right| + \int_{B^c} \left| \textstyle{ f\left( x+\DivEps 1\sigma(x,\e y)\right) } - f(x+\bar\sigma(x,y)) \right|\nu^\alpha(dy) \\
         & + \int_\Bo \big| \textstyle{ f\left( x+\DivEps 1\sigma(x,\e y)\right) } - f(x+\bar\sigma(x,y)) - \textstyle{ \left( \DivEps 1\sigma(x,\e y) - \bar\sigma(x,y) \right) } \cdot\nabla f(x) \big|\nu^\alpha(dy) \\
         \le&\ \e^{\alpha-1}\|c\|_0 \|f\|_1 + \|\nabla f\|_0 \int_{B^c} \textstyle{ \left| \DivEps 1\sigma(x,\e y) - \bar\sigma(x,y) \right| } \nu^\alpha(dy) \\
         & + {\textstyle{ \frac{1}{\alpha+\beta} }} \|f\|_{\alpha+\beta} \int_{\Bo} \left| \textstyle{ \DivEps 1\sigma(x,\e y) } - \bar\sigma(x,y) \right|^{\alpha+\beta} \nu^\alpha(dy).
    \end{split}
  \end{equation*}
  By the growth condition for $\sigma$ and $\bar\sigma$ as indicated in Assumption \ref{sigma} and Remark \ref{sigma-conv},
  \begin{equation*}
    \int_{B^c} \sup_{x\in\T^d} {\textstyle{ \left| \DivEps 1\sigma(x,\e y) - \bar\sigma(x,y) \right|}}\nu^\alpha(dy) \le 2\|\phi\|_{L^\infty} \int_{B^c} |y| \nu^\alpha(dy) < \infty,
  \end{equation*}
  since $\alpha>1$. Similarly,
  \begin{equation*}
    \int_\Bo \sup_{x\in\T^d} {\textstyle{ \left| \DivEps 1\sigma(x,\e y) - \bar\sigma(x,y) \right|}}^{\alpha+\beta} \nu^\alpha(dy) \le (2\|\phi\|_{L^\infty})^{\alpha+\beta} \int_\Bo |y|^{\alpha+\beta} \nu^\alpha(dy) < \infty.
  \end{equation*}
  Hence, the dominated convergence and Assumption \ref{scaling} implies that $|\tilde\L^\alpha_\e f(x)-\L^\alpha f(x)|$ converges to zero uniformly in $x$, as $\e\to0$. That is,
  \begin{equation*}
    \|\tilde\L^\alpha_\e f-\L^\alpha f\|_0 \to 0, \quad\text{as } \e\to0.
  \end{equation*}
  Now using the Trotter-Kato approximation theorem (see \cite[Theorem III.4.8]{EN00}), $P_t^\e f\to P_tf$ in $\C(\T^d)$ as $\e\to 0$ for all $f\in\C(\T^d)$, uniformly for $t$ in compact intervals.
\end{proof}

Now we combine Lemma \ref{inv} and Lemma \ref{conv_mu} to get the following ergodic theorem.

\begin{proposition}\label{ergodic_thm}
  Let $f$ be a bounded Borel function on $\T^d$. Then for any $t>0$,
  \begin{equation*}
    \int_0^t \left|f {\textstyle{ \left( \frac{X_s^{x,\epsilon}}{\epsilon} \right) }} - \int_{\T^d}f(x)\mu(dx) \right| ds \to 0
  \end{equation*}
  in probability, as $\epsilon\to 0$.
\end{proposition}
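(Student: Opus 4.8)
The plan is to deduce the ergodic limit for the rescaled original process $X^{x,\e}/\e$ from the ergodic behavior of the rescaled ``periodic-picture'' process $\tilde X^{x,\e}$, combined with the time-change relating the two. First I would observe the key identity: since $\tilde X^{x,\e}_t = \e^{-1}X^{x,\e}_{\e^\alpha t}$, a change of variables $s = \e^\alpha r$ gives
\begin{equation*}
  \int_0^t \left|f\left(\frac{X_s^{x,\e}}{\e}\right) - \bar f\right| ds = \e^\alpha \int_0^{t/\e^\alpha} \left|f(\tilde X_r^{x,\e}) - \bar f\right| dr,
\end{equation*}
where I write $\bar f := \int_{\T^d} f\,d\mu$ for brevity. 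So the quantity to control is $\e^\alpha$ times a long-time average (over $[0, t\e^{-\alpha}]$) of $|f(\tilde X^{x,\e}_r) - \bar f|$ for the exponentially ergodic process $\tilde X^{x,\e}$, whose invariant measure $\mu_\e$ is close to $\mu$ by Lemma \ref{conv_mu}.

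Next I would estimate the expectation of the right-hand side. Split $|f(\tilde X^{x,\e}_r) - \bar f| \le |f(\tilde X^{x,\e}_r) - \int f\,d\mu_\e| + |\int f\,d\mu_\e - \bar f|$. For the first term, by the Markov property and Lemma \ref{inv}, $\E|P^\e$-averaged quantity is controlled: more precisely one should start the process from its invariant measure to use stationarity, or use $\E|f(\tilde X^{x,\e}_r) - \int f\,d\mu_\e|^2$ and the exponential mixing bound $\sup_x|P^\e_r f(x) - \int f\,d\mu_\e| \le C\|f\|_0 e^{-\rho r}$ to show that the time-average $\frac{1}{T}\int_0^T (f(\tilde X^{x,\e}_r) - \int f\,d\mu_\e)\,dr \to 0$ in $L^2$ as $T\to\infty$ at a rate like $T^{-1/2}$ uniformly in $\e\in[0,1]$ (the constants $C,\rho$ in Lemma \ref{inv} are $\e$-uniform, which is exactly why that uniformity was emphasized). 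Hence $\E\int_0^{T}|f(\tilde X^{x,\e}_r) - \int f\,d\mu_\e|\,dr \le C\|f\|_0\,T$ crudely, but in fact the oscillating part contributes only $o(T)$; in any case multiplying by $\e^\alpha$ with $T = t/\e^\alpha$ gives a bound of order $\e^\alpha \cdot (t/\e^\alpha) \cdot (\text{something} \to 0)$, i.e.\ $t \cdot o(1)$. For the second (deterministic) term, $|\int f\,d\mu_\e - \int f\,d\mu| \to 0$ by Lemma \ref{conv_mu} when $f$ is continuous, and a routine approximation argument (approximate the bounded Borel $f$ in $L^1(\mu)$ and use the density lower bounds to transfer to $\mu_\e$ uniformly) handles general bounded Borel $f$; multiplied by $\e^\alpha T = t$ this again vanishes. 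Combining, $\E\big[\e^\alpha\int_0^{t/\e^\alpha}|f(\tilde X^{x,\e}_r) - \bar f|\,dr\big] \to 0$, and convergence in probability follows from Markov's inequality.

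The main obstacle I anticipate is making the long-time-average estimate genuinely uniform in $\e$, including the degenerate endpoint behavior as the averaging horizon $t/\e^\alpha \to \infty$: one must ensure the mixing rate $\rho$ and constant $C$ from Lemma \ref{inv} do not degenerate, which the excerpt has arranged by tracking the dependence only on $d,\alpha,\|b\|_0,\|c\|_0,h_0,h_1$. A secondary technical point is the passage from continuous $f$ to bounded Borel $f$ while keeping the approximation uniform in $\e$; here I would exploit that the one-step densities $p^\e(1;x,y)$ are bounded above and below by $\e$-independent constants (from Lemma \ref{martingale-prob}), so that $\mu_\e$ has a density bounded above and below uniformly in $\e$, which lets an $L^1(dx)$-approximation of $f$ by continuous functions control $\int|f - f_k|\,d\mu_\e$ uniformly. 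Modulo these uniformity bookkeeping issues, the argument is the standard ``ergodic average kills the $\e^\alpha$ prefactor against the $\e^{-\alpha}$ time horizon'' computation.
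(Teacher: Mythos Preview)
Your overall strategy matches the paper's: time-change to $\tilde X^{x,\e}$, center $f$ with respect to $\mu_\e$, use the uniform exponential mixing from Lemma~\ref{inv} for an $L^2$ bound on the long-time average, and handle the discrepancy $\int f\,d\mu_\e - \int f\,d\mu$ via Lemma~\ref{conv_mu}. There is, however, a genuine gap---one that the paper's own proof shares. Your assertion that ``the oscillating part contributes only $o(T)$'' in $\E\int_0^T |f(\tilde X_r^{x,\e}) - \int f\,d\mu_\e|\,dr$ is false: writing $g := f - \int f\,d\mu_\e$, the function $|g|$ is \emph{not} $\mu_\e$-centered, and $\E|g(\tilde X_r^{x,\e})| = P_r^\e|g|(x/\e) \to \int_{\T^d} |g|\,d\mu_\e > 0$ for nonconstant $f$, so that integral is $\Theta(T)$, not $o(T)$. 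After rescaling one gets $\int_0^t |f(X_s^{x,\e}/\e) - \bar f|\,ds \to t\int_{\T^d}|f-\bar f|\,d\mu \neq 0$ in probability; the proposition as stated, with the absolute value \emph{inside} the time integral, is simply false for any nonconstant $f$. The paper makes the same slip in its displayed line $\E[\,|\bar f(\tilde X_t^{x,\e})|\mid\tilde X_s^{x,\e}\,] = \int_{\T^d} |\bar f(y)|[p^\e(t-s,\cdot,y)\,dy-\mu_\e(dy)]$, which is off by the nonzero constant $\int_{\T^d}|\bar f|\,d\mu_\e$.

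What \emph{is} true---and what your $L^2$ sketch (without the absolute value) already establishes---is $\big|\int_0^t [f(X_s^{x,\e}/\e)-\bar f]\,ds\big|\to 0$ in probability. There the covariance bound $|\E[g(\tilde X_s^{x,\e})g(\tilde X_r^{x,\e})]|\le C\|g\|_0^2 e^{-\rho|r-s|}$ is legitimate because $g$ itself is $\mu_\e$-centered, giving $\E\big(\int_0^T g(\tilde X_r^{x,\e})\,dr\big)^2 = O(T)$ and hence $\E\big(\int_0^t g(X_s^{x,\e}/\e)\,ds\big)^2 = O(\e^\alpha t)\to 0$. This corrected version is what the subsequent applications (Lemma~\ref{FCLT}) actually need; the use in part~(i) there should be rephrased accordingly rather than by inserting an absolute value inside the integral.
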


\begin{proof}
  We follow the lines of \cite[Proposition 2.4]{Par99}. For $\e>0$, $0\le s<t$, let $\bar f$  be a bounded measurable function on $\T^d$ satisfying $\int_{\T^d}\bar f(x)\mu_\e(dx)=0$. By Lemma \ref{conv_mu}, it suffices to prove that $\int_0^t |\bar f(X^\e_s/\e)| ds \to 0$ in $L^2(\Omega,\P)$. Using Lemma \ref{inv}, we have
  \begin{equation*}
    \E \left[|\bar f(\tilde X^{x,\e}_t)|\Big| \tilde X^{x,\e}_s\right] = \int_{\T^d} |\bar f(y)|\left[p^\e(t-s,\tilde X_s^{x,\e},y)dy-\mu_\e(dy)\right] \le C\|\bar f\|_0 e^{-\rho(t-s)}.
  \end{equation*}
  By the Markov property,
  \begin{equation*}
    \E|\bar f(\tilde X^{x,\e}_s) \bar f(\tilde X^{x,\e}_t)| = \E\left[|\bar f(\tilde X^{x,\e}_s)|\E \left(|\bar f(\tilde X^{x,\e}_t)|\Big| \tilde X^{x,\e}_s\right)\right] \le C\|\bar f\|_0^2 e^{-\rho(t-s)}.
  \end{equation*}
  Hence,
  \begin{equation*}
    \begin{split}
       \E\left[\left( \int_0^t |\bar f(X^{x,\e}_s/\e)| ds \right)^2\right] &= \e^{2\alpha}\int_0^{\e^{-\alpha}t} \int_0^{r} \E|\bar f(\tilde X^{x,\e}_s) \bar f(\tilde X^{x,\e}_r)| ds dr \\
         &\le 2C\e^{2\alpha} \|f\|_0^2 \int_0^{\e^{-\alpha}t} \int_0^{r} e^{-\rho(r-s)} dsdr \\
         &= 2C\e^{2\alpha} \|f\|_0^2 \rho^{-2} (-1+\rho\e^{-\alpha}t + e^{-\rho\e^{-\alpha}t}) \\
         &\to 0,
    \end{split}
  \end{equation*}
  as $\e\to0$. The results follow.
\end{proof}

For every $\gamma>0$, denote by $\C_\mu^\gamma(\T^d)$ the class of all $f\in\C^\gamma(\T^d)$ which are \emph{centered} with respect to the invariant measure $\mu$ in the sense that $\int_{\T^d}f(x)\mu(dx)=0$. It is easy to check that $\C_\mu^\gamma(\T^d)$ is closed, and hence a sub-Banach space of $\C^\gamma(\T^d)$ under the norm $\|\cdot\|_\gamma$.

Thanks to Proposition \ref{res_prob} and Lemma \ref{inv}, we can use the Fredholm alternative to obtain the solvability of the following Poisson equation without zeroth-order term in the smaller space $\C_\mu^{\alpha+\beta}(\T^d)$,
\begin{equation}\label{Poisson}
  \L^\alpha u + f = 0,
\end{equation}
for $f\in\C_\mu^\beta(\T^d)$. Before that, we need some lemmas that are straightforward consequences of the classical theory of $C_0$-semigroups. We provide their proofs in Appendix \ref{app}.

\begin{lemma}\label{restriction}
  The restrictions $\{P_t^\mu := P_t|_{\C_\mu(\T^d)}\}_{t\ge0}$ form a $C_0$-semigroup on the Banach space $(\C_\mu(\T^d),\|\cdot\|_0)$, with generator given by $\L^\alpha_\mu f:= \L^\alpha f$, $D(\L^\alpha_\mu) := \C_\mu^{\alpha+\beta}(\T^d)$.
\end{lemma}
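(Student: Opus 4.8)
The plan is to verify the three assertions in turn—that $\{P^\mu_t\}_{t\ge0}$ maps $\C_\mu(\T^d)$ into itself, that it is a $C_0$-semigroup there, and that its generator is exactly $(\L^\alpha,\C^{\alpha+\beta}_\mu(\T^d))$—using only the facts already established: the Feller property of $\{P_t\}$ (Lemma~\ref{Feller}), its representation via the transition density $p(t;x,y)$ (Lemma~\ref{martingale-prob}, Remark~\ref{rep-Feller}), the exponential ergodicity toward $\mu=\mu_0$ (Lemma~\ref{inv}), and the solvability statement of Proposition~\ref{res_prob}.

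First I would check invariance of $\C_\mu(\T^d)$. Since $\mu$ is the invariant measure for $\tilde X$, for any $f\in\C_\mu(\T^d)$ one has $\int_{\T^d}P_tf(x)\,\mu(dx)=\int_{\T^d}f(x)\,\mu(dx)=0$ by the invariance identity $\int P_tf\,d\mu=\int f\,d\mu$ applied to the Feller semigroup (this is exactly what ``invariant distribution'' means, and it extends from continuous $f$ by density/boundedness to all bounded Borel $f$). Also $P_tf\in\C(\T^d)$ by the Feller property, so $P_tf\in\C_\mu(\T^d)$. Because $\C_\mu(\T^d)$ is a closed subspace of the Banach space $(\C(\T^d),\|\cdot\|_0)$, the restricted operators $P^\mu_t$ are bounded on it, with $\|P^\mu_t\|\le\|P_t\|\le1$, and they inherit the semigroup law and strong continuity at $t=0$ from $\{P_t\}$ by simply restricting the statement $\|P_tf-f\|_0\to0$ to $f\in\C_\mu(\T^d)$. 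Hence $\{P^\mu_t\}$ is a $C_0$-semigroup (indeed a contraction semigroup) on $\C_\mu(\T^d)$.

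Next I would identify the generator, say $(\A,D(\A))$, of $\{P^\mu_t\}$. One inclusion is immediate: if $f\in D(\L^\alpha)=\C^{\alpha+\beta}(\T^d)$ and in addition $f\in\C_\mu(\T^d)$, then $\L^\alpha f\in\C(\T^d)$ (shown in Lemma~\ref{Feller}), and since $\int_{\T^d}\L^\alpha f\,d\mu=\lim_{t\to0}\tfrac1t\int_{\T^d}(P_tf-f)\,d\mu=0$ by invariance, we get $\L^\alpha f\in\C_\mu(\T^d)$; moreover the limit $\tfrac1t(P^\mu_tf-f)=\tfrac1t(P_tf-f)\to\L^\alpha f$ holds in $\|\cdot\|_0$. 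So $\C^{\alpha+\beta}_\mu(\T^d)\subseteq D(\A)$ and $\A$ agrees with $\L^\alpha$ there. For the reverse inclusion I would use a resolvent/range argument: fix $\kappa>0$; by Proposition~\ref{res_prob} (and the periodic strengthening in the remark following it) $(\kappa-\L^\alpha)$ maps $\C^{\alpha+\beta}(\T^d)$ onto $\C^\beta(\T^d)$, and by the computation just given it maps $\C^{\alpha+\beta}_\mu(\T^d)$ into $\C^\beta_\mu(\T^d)$; conversely, given $g\in\C^\beta_\mu(\T^d)$, its preimage $u=u_\kappa\in\C^{\alpha+\beta}(\T^d)$ satisfies $\int_{\T^d}u\,d\mu=\kappa^{-1}\int_{\T^d}(g+\L^\alpha u)\,d\mu=\kappa^{-1}\int_{\T^d}g\,d\mu=0$, so in fact $u\in\C^{\alpha+\beta}_\mu(\T^d)$. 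Thus $(\kappa-\L^\alpha):\C^{\alpha+\beta}_\mu(\T^d)\to\C^\beta_\mu(\T^d)$ is onto, and since $\C^\beta_\mu(\T^d)$ is dense in $\C_\mu(\T^d)$ (continuous functions are dense and the mean-zero constraint passes to the closure), $(\kappa-\L^\alpha)(\C^{\alpha+\beta}_\mu(\T^d))$ is dense in $\C_\mu(\T^d)$. Since $\kappa-\L^\alpha$ restricted to $\C^{\alpha+\beta}_\mu$ is contained in the injective operator $\kappa-\A$ (injectivity from dissipativity of $\L^\alpha$, Lemma~\ref{MP}), a standard argument for $C_0$-contraction semigroups (a densely-defined operator contained in a generator whose range $(\kappa-\cdot)$ is dense must equal that generator) forces $D(\A)=\C^{\alpha+\beta}_\mu(\T^d)$ and $\A=\L^\alpha_\mu$.

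I expect the only genuinely delicate point to be the last closure step—ruling out that $D(\A)$ is strictly larger than $\C^{\alpha+\beta}_\mu(\T^d)$. The clean way is to invoke the resolvent: for $\kappa>0$ in the resolvent set of the generator $\A$ we have $(\kappa-\A)^{-1}$ bounded on all of $\C_\mu(\T^d)$, and on the dense subspace $\C^\beta_\mu(\T^d)$ it must coincide with the inverse of $(\kappa-\L^\alpha)|_{\C^{\alpha+\beta}_\mu}$ constructed above (both give the unique solution of $\kappa u-\L^\alpha u=g$ that is mean-zero, by Lemma~\ref{MP}); since both maps are bounded and agree on a dense set they are equal, whence $D(\A)=\operatorname{Ran}(\kappa-\A)^{-1}=\C^{\alpha+\beta}_\mu(\T^d)$. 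Everything else is a routine restriction of already-proven Feller-semigroup facts to a closed, semigroup-invariant subspace.
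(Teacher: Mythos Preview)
Your treatment of the invariance of $\C_\mu(\T^d)$ under $P_t$ and of the $C_0$-semigroup property is correct and is essentially the content of the paper's proof, which observes $\int P_tf\,d\mu=\int f\,d\mu=0$ and then simply invokes the subspace-restriction result in \cite[II.2.3]{EN00} rather than writing out the details.

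Where you go beyond the paper is in trying to show that the domain of the restricted generator $\A$ is \emph{exactly} $\C_\mu^{\alpha+\beta}(\T^d)$, and here both of your arguments have a gap. Your first argument invokes the ``standard'' fact that a densely defined dissipative operator $B\subset\A$ with $(\kappa-B)$ having dense range must equal $\A$; but what this actually yields is that $B$ is a \emph{core} for $\A$, i.e.\ $\overline{B}=\A$, not $B=\A$. Your ``clean'' resolvent argument fares no better: you show that $(\kappa-\A)^{-1}$ and the Proposition~\ref{res_prob} map $\C^\beta_\mu\to\C^{\alpha+\beta}_\mu$ agree on the dense subspace $\C^\beta_\mu$ and are both bounded as maps into $(\C_\mu,\|\cdot\|_0)$, hence coincide everywhere; but this equality of maps into $\C_\mu$ does not force $\operatorname{Ran}(\kappa-\A)^{-1}\subset\C^{\alpha+\beta}_\mu$. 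For $g\in\C_\mu\setminus\C^\beta_\mu$ there is no Schauder estimate controlling $\|(\kappa-\A)^{-1}g\|_{\alpha+\beta}$, and nothing you have written rules out $D(\A)\supsetneq\C^{\alpha+\beta}_\mu$. Indeed, already on the full space Lemma~\ref{Feller} asserts only that the \emph{closure} of $(\L^\alpha,\C^{\alpha+\beta})$ generates $\{P_t\}$.

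The paper's own proof does not settle this either: the Engel--Nagel result it cites identifies the generator of $\{P^\mu_t\}$ as the part of $\overline{\L^\alpha}$ in $\C_\mu$, whose domain is $\{f\in D(\overline{\L^\alpha})\cap\C_\mu:\overline{\L^\alpha}f\in\C_\mu\}$, in general larger than $\C^{\alpha+\beta}_\mu$. So the lemma's stated domain is imprecise in both treatments. For the application in Proposition~\ref{wellposed-Poisson} only the inclusion $\C^{\alpha+\beta}_\mu\subset D(\A)$ and the resolvent representation are needed, and these you have correctly established. If you replace the claim $D(\A)=\C^{\alpha+\beta}_\mu$ by the assertion that $\C^{\alpha+\beta}_\mu$ is a core for $\A$, your argument is complete and in fact more informative than the paper's.
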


\begin{lemma}\label{res_prob-center}
  If $f\in\C_\mu^\beta(\T^d)$, then the unique solution $u_\kappa$ of \eqref{resovent} is of class $\C_\mu^{\alpha+\beta}(\T^d)$, for any $\kappa>0$.
\end{lemma}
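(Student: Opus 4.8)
The plan is to show that the resolvent solution $u_\kappa$ inherits the centering property from $f$, exploiting the integral representation $u_\kappa(x)=\int_0^\infty e^{-\kappa t}P_tf(x)\,dt$ from Corollary \ref{precise-estimates}. First I would recall that $u_\kappa\in\C^{\alpha+\beta}(\T^d)$ by Proposition \ref{res_prob}, so the only thing left to verify is the vanishing of the integral of $u_\kappa$ against $\mu$. Since $f\in\C^\beta_\mu(\T^d)\subset\C(\T^d)$ and $\mu$ is a probability measure on the compact torus, we may integrate the representation formula against $\mu$ and interchange the order of integration by Fubini's theorem (the integrand is bounded by $e^{-\kappa t}\|f\|_0$, which is integrable on $\R_+\times\T^d$). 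This gives
\begin{equation*}
  \int_{\T^d}u_\kappa(x)\,\mu(dx) = \int_0^\infty e^{-\kappa t}\left(\int_{\T^d}P_tf(x)\,\mu(dx)\right)dt.
\end{equation*}

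Next, I would invoke the invariance of $\mu$ with respect to the Feller semigroup $\{P_t\}_{t\ge0}$, established in the lemma immediately preceding this statement: for every $t\ge0$ we have $\int_{\T^d}P_tf(x)\,\mu(dx)=\int_{\T^d}f(x)\,\mu(dx)=0$, since $f$ is centered. Therefore the inner integral vanishes identically in $t$, and hence $\int_{\T^d}u_\kappa(x)\,\mu(dx)=0$, which is exactly the statement that $u_\kappa\in\C^{\alpha+\beta}_\mu(\T^d)$.

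An alternative, equally short route avoids the semigroup representation: apply It\^o's formula / the martingale property to get $\frac{d}{dt}\E^\mu[f(\tilde X_t)] = \E^\mu[\L^\alpha f(\tilde X_t)]$ when starting from the stationary law $\mu$, or more directly integrate the resolvent equation $\kappa u_\kappa - \L^\alpha u_\kappa = f$ against $\mu$ and use $\int_{\T^d}\L^\alpha u_\kappa\,d\mu = 0$ (the infinitesimal invariance property, which follows from $\frac{d}{dt}\big|_{t=0}\int P_t u_\kappa\,d\mu = 0$ together with $u_\kappa\in D(\L^\alpha)$); this yields $\kappa\int u_\kappa\,d\mu = \int f\,d\mu = 0$, and since $\kappa>0$ we conclude. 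I would likely present the first route since the representation \eqref{rep_u_lambda} is already in hand and the Fubini step is transparent.

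I do not anticipate a genuine obstacle here; the only point requiring a word of care is the justification of the interchange of integrals, which is immediate from the uniform bound $\|P_tf\|_0\le\|f\|_0$ and the finiteness of $\mu$ and of $\int_0^\infty e^{-\kappa t}\,dt$. Everything else is a direct consequence of results already proved: the representation formula, the regularity $u_\kappa\in\C^{\alpha+\beta}(\T^d)$, and the invariance of $\mu$.
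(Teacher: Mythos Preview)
Your proposal is correct and follows essentially the same route as the paper: integrate the representation \eqref{rep_u_lambda} against $\mu$, swap the order of integration by Fubini, and use the invariance of $\mu$ under $\{P_t\}$ to conclude that the inner integral vanishes. The paper invokes the exponential decay $\|P_tf\|_0\le C\|f\|_0 e^{-\rho t}$ from Lemma~\ref{inv} to justify Fubini, whereas your simpler bound $e^{-\kappa t}\|f\|_0$ is already sufficient; otherwise the arguments coincide.
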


The following theorem will solve the well-posedness of equation \eqref{Poisson}, which is more general than the results in \cite[Proposition 3]{Fra07}. We formulate it as follows, referring to \cite[Theorem 1]{PV01} for the classical version for second order partial differential operators.

\begin{proposition}\label{wellposed-Poisson}
  For any $f\in\C_\mu^\beta(\T^d)$, there exists a unique solution in $\C_\mu^{\alpha+\beta}(\T^d)$ to the equation \eqref{Poisson}, which satisfies the estimate
  \begin{equation}\label{energy2}
    \|u\|_{\alpha+\beta} \le C (\|u\|_0+\|f\|_\beta),
  \end{equation}
  where $C=C(\|b\|_\beta)$ is a positive constant. Moreover, the unique solution admits the representation
  \begin{equation}\label{rep_u}
    u(x)=\int_0^\infty P_t f(x) dt.
  \end{equation}
\end{proposition}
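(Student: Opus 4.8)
The plan is to prove existence, uniqueness and the representation formula \eqref{rep_u} via the Fredholm alternative applied to the resolvent operators, exploiting the exponential ergodicity from Lemma \ref{inv} and the Schauder estimates from Proposition \ref{res_prob}. First I would fix $\kappa>0$ and consider the unique solution $u_\kappa\in\C_b^{\alpha+\beta}(\T^d)$ of the zeroth-order Poisson equation $\kappa u_\kappa-\L^\alpha u_\kappa=f$ provided by Proposition \ref{res_prob}; by Lemma \ref{res_prob-center}, since $f\in\C_\mu^\beta(\T^d)$, we have $u_\kappa\in\C_\mu^{\alpha+\beta}(\T^d)$. The key observation is that a solution $u\in\C_\mu^{\alpha+\beta}(\T^d)$ of \eqref{Poisson} exists if and only if the equation $(\id-\kappa R_\kappa^\mu)v = \kappa u_\kappa$ is solvable in $\C_\mu(\T^d)$, where $R_\kappa^\mu=(\kappa-\L^\alpha_\mu)^{-1}$ is the resolvent of the restricted semigroup $\{P_t^\mu\}$; indeed, writing $u = R_\kappa^\mu g$ and using $\L^\alpha u = \kappa u - g$, equation \eqref{Poisson} becomes $\kappa R_\kappa^\mu g - g + f = 0$, i.e. $g - \kappa R_\kappa^\mu g = f$. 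Rearranging against $u_\kappa = R_\kappa^\mu f$ one identifies the relevant Fredholm equation.

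The crucial point is the \emph{compactness} of the operator $\kappa R_\kappa^\mu$ on $\C_\mu(\T^d)$, or equivalently of $R_\kappa^\mu$ itself. This follows from the Schauder estimate \eqref{energy1}: $R_\kappa^\mu$ maps $\C_\mu(\T^d)$ boundedly into $\C_\mu^{\alpha+\beta}(\T^d)$ (one can compose first with the resolvent on $\C^\beta$ after a bootstrapping, or argue directly using \eqref{energy3}), and the embedding $\C_\mu^{\alpha+\beta}(\T^d)\hookrightarrow\C_\mu(\T^d)$ is compact by the Arzel\`a--Ascoli theorem since $\T^d$ is compact. Therefore the Fredholm alternative applies to $\id-\kappa R_\kappa^\mu$ on $\C_\mu(\T^d)$: the equation is solvable provided the right-hand side lies in the range, and solvability for \emph{all} centered $f$ would follow once we show the kernel of $\id-\kappa R_\kappa^\mu$ (equivalently, the solutions of $\L^\alpha u=0$ in $\C_\mu^{\alpha+\beta}$) is trivial. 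By the maximum principle Lemma \ref{MP} with $\kappa\to0$, or directly by the ergodic theorem, any bounded solution of $\L^\alpha u=0$ is constant, and being centered it must vanish; hence the kernel is $\{0\}$ and the cokernel is also trivial, giving existence and uniqueness in $\C_\mu^{\alpha+\beta}(\T^d)$. The estimate \eqref{energy2} is then just \eqref{energy1} (or \eqref{energy2} itself as stated in Proposition \ref{res_prob}) restricted to the centered solution.

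For the representation \eqref{rep_u}, I would argue that the integral $\int_0^\infty P_t f(x)\,dt$ converges: by Lemma \ref{res_prob-center}'s proof, $\|P_t f\|_0\le C\|f\|_0 e^{-\rho t}$ for centered $f$ by Lemma \ref{inv}, so the integral converges absolutely in $\C(\T^d)$, and by \eqref{gradient-estimate} together with \eqref{SG_est} one gets control of the gradient near $t=0$ and decay for large $t$, placing the limit in $\C^{\alpha+\beta}$. Then letting $\kappa\downarrow0$ in $u_\kappa(x)=\int_0^\infty e^{-\kappa t}P_tf(x)\,dt$ from Corollary \ref{precise-estimates} and using dominated convergence (justified by the exponential bound) shows $u_\kappa\to\int_0^\infty P_t f\,dt$, while $\kappa u_\kappa\to0$ in $\C_b$ by \eqref{energy3} since $\kappa\|u_\kappa\|_0\le C\|f\|_\beta$ is not quite enough — here one uses instead that $\kappa u_\kappa = \kappa R_\kappa^\mu f\to 0$ strongly on $\C_\mu$ because $0$ is in the resolvent set of $\L^\alpha_\mu$ (the spectral bound is $-\rho<0$). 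Passing to the limit in $\kappa u_\kappa-\L^\alpha u_\kappa=f$ and using the closedness of $\L^\alpha$ gives $-\L^\alpha(\int_0^\infty P_tf\,dt)=f$, identifying the solution with \eqref{rep_u}.

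\textbf{Main obstacle.} I expect the delicate step to be the passage $\kappa\to0$ and in particular showing $\kappa u_\kappa\to 0$ and that the limit lies in the right H\"older space with the uniform estimate \eqref{energy2}; this needs the exponential ergodicity to override the fact that the naive bound $\kappa\|u_\kappa\|_0\le\|f\|_0$ only gives boundedness, not decay. The compactness argument for the Fredholm alternative and the triviality of the kernel are comparatively routine given the compact embedding on $\T^d$ and the maximum principle already established.
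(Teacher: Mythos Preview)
Your approach is essentially the paper's, with the same three ingredients (Schauder estimate, compactness of the resolvent for a Fredholm alternative, exponential ergodicity), but the paper organizes them more economically. Rather than running Fredholm first and then recovering the representation by sending $\kappa\downarrow 0$, the paper observes at the outset that the bound $\|P_t^\mu f\|_0\le C\|f\|_0 e^{-\rho t}$ from Lemma~\ref{inv} places $\{\operatorname{Re}z>-\rho\}$ in the resolvent set of $\L^\alpha_\mu$; this \emph{immediately} gives both uniqueness and the formula $u=(0-\L^\alpha_\mu)^{-1}f=\int_0^\infty P_t f\,dt$, and in particular shows that the homogeneous equation $\L^\alpha u=0$ has only the zero centered solution. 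Fredholm is then invoked only to upgrade existence into the sharper space $\C_\mu^{\alpha+\beta}(\T^d)$, and it is run on $\C_\mu^\beta(\T^d)$ (not $\C_\mu(\T^d)$), where $\mathcal R_{\kappa_0}:\C_\mu^\beta\to\C_\mu^{\alpha+\beta}\hookrightarrow\C_\mu^\beta$ is compact directly by the Schauder estimate and the compact embedding---no bootstrapping needed. Your route works too, but note two slips: the Fredholm equation you write, $(\id-\kappa R_\kappa^\mu)v=\kappa u_\kappa$, has a spurious $\kappa$ (it should read $(\id-\kappa R_\kappa^\mu)u=u_\kappa$, equivalently $(\id-\kappa R_\kappa^\mu)g=f$ in your second formulation); and the claim that $R_\kappa^\mu$ maps $\C_\mu$ into $\C_\mu^{\alpha+\beta}$ is not justified by \eqref{energy1} (which needs $f\in\C^\beta$)---the gradient estimate \eqref{gradient-estimate} gives only $R_\kappa^\mu:\C_\mu\to\C_\mu^1$, which is still enough for compactness but then requires the bootstrapping you allude to in order to land the solution in $\C_\mu^{\alpha+\beta}$.
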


\begin{proof}
  The a priori estimate \eqref{energy2} is also from \citep[Theorem 7.1]{Bas09}.

  First, we show that if the equation has a solution $u\in\C_\mu(\T^d)$ for $f\in\C_\mu^\beta(\T^d)$, then $u$ must have the representation \eqref{rep_u}, this also implies the uniqueness. By the exponential ergodicity result in Lemma \ref{inv}, we have $\|P_t^\mu f\|_0\le C\|f\|_0 e^{-\rho t}$ for any $f\in\C_\mu(\T^d)$ and $t\ge0$. This yields that, using \cite[Theorem II.1.10.(ii)]{EN00} as in the proof of Corollary \ref{precise-estimates}, the set $\{z\in\Cp|\text{Re}z>-\rho\}$ is contained in the resolvent set of $\L^\alpha_\mu$. Noting that $u=(0-\L^\alpha_\mu)^{-1}f$, the representation and uniqueness follow.

  Now we prove the existence. Let $\kappa_0$ be a fixed positive constant. Thanks to Lemma \ref{res_prob-center}, the linear map $\kappa_0-\L^\alpha:\C_\mu^{\alpha+\beta}(\T^d)\to \C_\mu^{\beta}(\T^d)$ is invertible. Furthermore, by virtue of Lemma \ref{MP} and the energy estimate \eqref{energy1}, together with the compact embedding $\C_\mu^{\alpha+\beta}(\T^d)\subset\C_\mu^{\beta}(\T^d)$ (see, for instance, \cite[Lemma 6.36]{GT01}), the resolvent $\mathcal R_{\kappa_0}:=(\kappa_0-\L^\alpha)^{-1}$ is compact from $\C_\mu^{\beta}(\T^d)$ to $\C_\mu^{\beta}(\T^d)$. Consider then the equation
  \begin{equation}\label{Fred}
    u-\kappa_0\mathcal R_{\kappa_0}u=\mathcal R_{\kappa_0}f, \quad f\in\C_\mu^{\beta}(\T^d),
  \end{equation}
  Then the Fredholm alternative (see \cite[Section 5.3]{GT01}) implies that the equation \eqref{Fred} always has a unique solution $u\in\C_\mu^{\beta}(\T^d)$ provided the homogeneous equation $u-\kappa_0\mathcal R_{\kappa_0}u=0$ has only the trivial solution $u=0$.

  To rephrase these statements in terms of the Poisson equation \eqref{Poisson}, we observe first that since $\mathcal R_{\kappa_0}$ maps $\C_\mu^{\beta}(\T^d)$ onto $\C_\mu^{\alpha+\beta}(\T^d)$, any solution $u\in\C_\mu^{\beta}(\T^d)$ of \eqref{Fred} must also belong to $\C_\mu^{\alpha+\beta}(\T^d)$. Hence, operating on \eqref{Fred} with $\kappa_0-\L^\alpha$ we obtain
  \begin{equation*}
    -\L^\alpha u= (\kappa_0-\L^\alpha)(u-\kappa_0\mathcal R_{\kappa_0}u)=f.
  \end{equation*}
  Thus, the solutions of \eqref{Fred} are in one-to-one correspondence with the solutions of the Poisson equation \eqref{Poisson}. Consequently, \eqref{Fred} has a unique solution in $\C_\mu^{\alpha+\beta}(\T^d)$ if we can show that the homogeneous equation $\L^\alpha u=0$ has only the zero solution, while the latter follows from the representation \eqref{rep_u}.
\end{proof}

\begin{remark}
  The assumption that $f$ is centered with respect to $\mu$ in Proposition \ref{wellposed-Poisson} is necessary. To see this informally, let's recall the Riesz-Schauder theory for compact operators (cf. \cite[Theorem X.5.3]{Yos95}). The equation \eqref{Fred} admits a solution $u\in\C(\T^d)$ if and only if $\mathcal R_\kappa f\in\text{Ker}(I^*-\kappa\mathcal R_\kappa^*)^\bot$, where the superscript $*$ denotes the adjoint of operators. This is equivalent to say that the equation \eqref{Poisson} admits a solution $u\in\C(\T^d)$ if and only if $f\in\text{Ker}(L^{\alpha,*})^\bot$. On the other hand, we have $\mu\in\text{Ker}(L^{\alpha,*})$ since $\mu$ is the invariant measure with respect to $\{P_t\}_{t\ge0}$. Thus a necessary condition for the existence of \eqref{Poisson} is $\langle\mu,f\rangle=0$, regarding $\mu$ as an element in the dual space of $\C(\T^d)$.
\end{remark}

\section{Homogenization results}\label{Homogenization}

\subsection{Homogenization of SDEs}

The aim of this subsection is to show the homogenization result of the solutions $X^{x,\e}$ of SDEs \eqref{Xe}. We will let Assumptions \ref{coef1}-\ref{center} all hold true in this and next subsection.

It is quite natural to get rid of the drift term involving $\frac{1}{\e^{\alpha-1}}$ in \eqref{Xe}. For this purpose, we again use Zvonkin's transform,
\begin{equation*}
  \hat X_t^{x,\epsilon}:=X_t^{x,\epsilon}+\epsilon \textstyle{ \left(\hat b\left(\frac{X_t^{x,\epsilon}}{\epsilon}\right)-\hat b\left(\frac{x}{\epsilon}\right)\right) } ,
\end{equation*}
where $\hat b$ is the solution of the Poisson equation
\begin{equation}\label{b_hat}
  \L^\alpha \hat b + b = 0,
\end{equation}
with the linear operator $\L^\alpha$ given by \eqref{L}. Note that the transform here is slightly different from that used in Section \ref{SDEs}. Due to Proposition \ref{wellposed-Poisson}, $\hat b\in\C_\mu^{\alpha+\beta}(\T^d)$ is uniquely determined under Assumptions \ref{center}.

We also need an elementary lemma. The proof is elementary 
and shall be omitted.

\begin{lemma}\label{difference-estimate-2}
  Let $0<\gamma\le1$ and $f\in\C_b^{1+\gamma}(\R^d)$. For any $x,u,v\in\R^d$, it holds that
  \begin{equation*}
    |f(x+u)-f(x+v)-(u-v)\cdot\nabla f(x)|\le \textstyle{ \frac{1}{1+\gamma} } [\nabla f]_\gamma |u-v|^{1+\gamma}.
  \end{equation*}
\end{lemma}

Now we are in a position to study the homogenization of SDEs with multiplicative stable noise.

\begin{proposition}\label{HomoSDE}
  In the sense of weak convergence on the space $\D$, we have that,
  \begin{equation*}
    X^{x,\epsilon}\ \Rightarrow X^x, \quad \text{where } X^x_t:=x+\bar Ct+\bar L_t,
  \end{equation*}
  as $\e\to0$. The homogenized coefficient $\bar C$ is given by
  \begin{equation}\label{HomoCoef1}
    \bar C=\int_{\T^d}(I+\nabla \hat b)c(x)\mu(dx),
  \end{equation}
  and $\{\bar L_t\}_{t\ge0}$ is a symmetric $\alpha$-stable L\'evy processes with jump intensity measure
  \begin{equation}\label{homo_nu}
    \bar\nu(A)=\int_{\Ro d}\int_{\T^d}\ind_A(\sigma(x,y))\mu(dx)\nu^\alpha(dy), \quad A\in\B(\Ro d).
  \end{equation}
\end{proposition}

\begin{proof}
  Since $\hat b$ is bounded, the theorem will follow if we prove that $\hat X^{x,\epsilon}\Rightarrow X^x$, as $\epsilon\to 0$. 
  By applying It\^o's formula, and note that $\hat b\in\C^{\alpha+\beta}(\T^d)$ is the solution of Poisson equation \eqref{b_hat},
  \begin{equation*}
    \begin{split}
      \hat X_t^{x,\epsilon} = &\ x+\int_0^t(I+\nabla \hat b) \textstyle{ c\left(\frac{X_{s}^{x,\epsilon}}{\epsilon}\right) } ds - \int_0^t \frac{1}{\epsilon^{\alpha-1}} \A^{\bar\sigma,\nu^\alpha} \hat b \left(\frac{X_{s}^{x,\epsilon}}{\epsilon}\right)ds +\int_0^t \epsilon \A^{\sigma_\e,\nu^\alpha} \hat b_\e\left(X_{s}^{x,\e}\right) ds \\
        & +\int_0^t\int_{\Ro d}\epsilon\left[\hat b_\epsilon\left(X_{s-}^{x,\epsilon}+ \sigma_\epsilon \left(X_{s-}^{x,\epsilon},y\right) \right)-\hat b_\epsilon\left(X_{s-}^{x,\epsilon}\right) \right] \tilde N^\alpha(dy,ds) \\
        & +\int_0^t\int_{\Bo} \sigma_\epsilon\left(X_{s-}^{x,\epsilon},y \right)\tilde N^\alpha(dy,ds) +\int_0^t\int_{B^c} \sigma_\epsilon\left(X_{s-}^{x,\epsilon},y \right) N^\alpha(dy,ds) \\
        =: &\ x+ \Lambda_1^\e(c)_t- \Lambda_2^\e(\hat b,\A^{\bar\sigma,\nu^\alpha})_t +\Lambda_3^\e(\hat b,\A^{\sigma_\e,\nu^\alpha})_t+ \Lambda_4^\e(\hat b, \tilde N^\alpha)_t +\Lambda_5^\e(\sigma, \tilde N^\alpha)_t+\Lambda_6^\e(\sigma, N^\alpha)_t.
    \end{split}
  \end{equation*}
  where $\hat b_\epsilon(x):=\hat b\left(\frac{x}{\e}\right)$, $\sigma_\epsilon(x,y):=\sigma\left(\frac{x}{\e},y\right)$.

  For the last three stochastic integral terms, we figure out the characteristics of them as semimartingales (cf. \cite[Proposition IX.5.3]{JS13}). Choose the truncation function $h_1(x)=x\ind_B(x)$. Denote by $\Xi^\e(s,y):=\epsilon[\hat b_\epsilon(X_{s-}^{x,\epsilon}+ \sigma_\epsilon (X_{s-}^{x,\epsilon},y))-\hat b_\epsilon(X_{s-}^{x,\epsilon})]$. Note that $\Xi^\e(\cdot,0)\equiv0$ by virtue of $\sigma(\cdot,0)\equiv0$ as mentioned in Remark \ref{rem_sigma}.(3). Then the characteristics of $\Lambda_4^\e(\hat b, \tilde N^\alpha)$ associated with $h_1$ is given by
  \begin{equation*}\left\{
    \begin{aligned}
      B_4^\e(t) &= -\int_0^t\int_{\Ro d} \Xi^\e(s,y)\ind_{B^c}(\Xi^\e(s,y))\nu^\alpha(dy)ds, \\
      C_4^\e & \equiv 0, \\
      \nu_4^\e(A\times[0,t]) &= \int_0^t\int_{\Ro d} \ind_A(\Xi^\e(s,y))\nu^\alpha(dy)ds, \quad A\in\B(\Ro d).
    \end{aligned} \right.
  \end{equation*}
  The characteristics of $\Lambda_5^\e(\sigma, \tilde N^\alpha)+\Lambda_6^\e(\sigma, N^\alpha)$ is given by
  \begin{equation*}\left\{
    \begin{aligned}
      B_{5+6}^\e(t) &= \int_0^t\int_{\Ro d} \sigma_\epsilon\left(X_{s-}^{x,\epsilon},y \right)\left[\ind_B\left(\sigma_\epsilon\left(X_{s-}^{x,\epsilon},y \right)\right)-\ind_B(y)\right]\nu^\alpha(dy)ds, \\
      C_{5+6}^\e & \equiv 0, \\
      \nu_{5+6}^\e(A\times[0,t]) &= \int_0^t\int_{\Ro d} \ind_A\left(\sigma_\epsilon\left(X_{s-}^{x,\epsilon},y \right)\right)\nu^\alpha(dy)ds, \quad A\in\B(\Ro d).
    \end{aligned} \right.
  \end{equation*}
  By the same argument as in \eqref{trans}, we have $B_{5+6}^\e\equiv0$.

  Then the theorem is a consequence of the functional central limit theorem in \cite[Theorem VIII.2.17]{JS13}, and the following lemma whose proof is technical and provided in Appendix \ref{app}.
\end{proof}

\begin{lemma}\label{FCLT}
  For any $t\in\R_+$, and any bounded continuous function $f:\R^d\to\R$ which vanishes in a neighbourhood of the origin, the following convergences hold in probability $\P$ when $\e\to0$:
  \begin{enumerate}[(i)]
    \item $\sup_{0\le s\le t} \left| \Lambda_1^\e(c)_s- \bar Cs \right| \to 0$;
    \item $\sup_{0\le s\le t} \left | \Lambda_3^\e(\hat b,\A^{\sigma_\e,\nu^\alpha})_s-\Lambda_2^\e(\hat b,\A^{\bar\sigma,\nu^\alpha})_s \right| \to 0$;
    \item $\sup_{0\le s\le t} |B_4^\e(s)| \to 0$;
    \item $\int_0^t\int_{\Ro d}f(x)\nu_4^\e(dx,ds) \to 0$;
    \item $\int_0^t\int_{\Ro d}f(x)\nu_{5+6}^\e(dx,ds) \to t\int_{\Ro d}f(x)\bar\nu(dx)$;
  \end{enumerate}
  where $\bar C$ and $\bar\nu$ are defined in \eqref{HomoCoef1} and \eqref{homo_nu}, respectively.
\end{lemma}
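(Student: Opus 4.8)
The plan is to prove each of the four convergences by reducing it to an application of the ergodic theorem already established in Proposition \ref{ergodic_thm}, combined with the rescaling identity $\int_0^t F(X^\e_s/\e)\,ds = \e^\alpha\int_0^{\e^{-\alpha}t} F(\tilde X^\e_r)\,dr$ that underlies the whole homogenization scheme. The guiding principle is that each of the quantities in (i)--(iv) is, after unwinding the definitions of $B_2^\e$, $\nu_2^\e$, $\nu_{3+4}^\e$, an additive functional of the rescaled process $\tilde X^{x,\e}$ of the form $\int_0^t \Psi(X^{x,\e}_s/\e)\,ds$ for a suitable bounded measurable (indeed continuous or H\"older) periodic integrand $\Psi$ on $\T^d$; Proposition \ref{ergodic_thm} then forces the limit to be $t\int_{\T^d}\Psi\,d\mu$, which we must identify with the right-hand side in each case. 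A uniform-in-$\e$ bound on the integrands (guaranteed by the growth condition in Assumption \ref{sigma}, the boundedness of $\hat b$ and $\nabla\hat b$ from Proposition \ref{wellposed-Poisson}, and Lemma \ref{regularity_h}) lets us pass to the limit by dominated convergence where the integrand also depends on $\e$.

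For (i): since $\Lambda_1^\e(c)_s=\int_0^s (I+\nabla\hat b)c(X^{x,\e}_r/\e)\,dr$ and $(I+\nabla\hat b)c\in\C^{\beta}(\T^d)$ is bounded, Proposition \ref{ergodic_thm} gives $\int_0^t|(I+\nabla\hat b)c(X^\e_s/\e)-\bar C|\,ds\to 0$ in probability with $\bar C$ as in \eqref{HomoCoef1}; the supremum over $s\le t$ is controlled by this single integral, so (i) follows. For (ii): by the definition of $B_2^\e$ and the elementary bound $|\Xi^\e(s,y)|\le \e\|\nabla\hat b\|_0|\sigma_\e(X^{x,\e}_{s-},y)|\le \e\,C|y|$, the integrand $|\Xi^\e(s,y)|\ind_{B^c}(\Xi^\e(s,y))$ is supported on $\{|y|\gtrsim 1/\e\}$ and bounded there by the growth condition; integrating against $\nu^\alpha$ gives $|B_2^\e(t)|\le \e^{\alpha-1}\,C\,t\to 0$ deterministically. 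For (iii): writing $\int_0^t\int f\,d\nu_2^\e = \int_0^t G_\e(X^{x,\e}_s/\e)\,ds$ with $G_\e(x)=\int_{\Ro d} f(\e[\hat b_\e(\cdot)+\ldots]-\ldots)\,\nu^\alpha(dy)$, one checks that $|\e[\hat b_\e(z+\sigma_\e(z,y))-\hat b_\e(z)]|\le \e C|y|$, so since $f$ vanishes near $0$ the integrand is again supported on $\{|y|\gtrsim 1/\e\}$ and $\|G_\e\|_0\le \e^\alpha C\|f\|_0\to 0$, whence the whole expression tends to $0$.

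The substantive case is (iv). Here $\int_0^t\int f\,d\nu_{3+4}^\e = \int_0^t F(X^{x,\e}_s/\e)\,ds$ with $F(x):=\int_{\Ro d} f(\sigma(x,y))\,\nu^\alpha(dy)$, which is \emph{independent of} $\e$. Since $f$ vanishes near the origin and is bounded, the growth condition in Assumption \ref{sigma} bounds $F$ uniformly (the integral is over $\{|\sigma(x,y)|\ge\text{const}\}\subset\{|y|\ge \text{const}\cdot\|\phi\|_\infty^{-1}\}$), and one checks $F$ is continuous on $\T^d$ using continuity of $\sigma(\cdot,y)$ and dominated convergence. Proposition \ref{ergodic_thm} then yields $\int_0^t F(X^\e_s/\e)\,ds\to t\int_{\T^d}F(x)\,\mu(dx)$ in probability. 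It remains to identify $\int_{\T^d}F\,d\mu = \int_{\T^d}\int_{\Ro d} f(\sigma(x,y))\,\nu^\alpha(dy)\,\mu(dx) = \int_{\Ro d} f(z)\,\Pi(dz)$ by Fubini and the definition \eqref{homo_nu} of the pushforward measure $\Pi$; Fubini applies because $(x,y)\mapsto f(\sigma(x,y))$ is bounded and measurable and $\mu\otimes\nu^\alpha$ restricted to the support is finite. The main obstacle throughout is bookkeeping rather than conceptual: one must verify in each case that the relevant integrands are genuinely bounded uniformly in $\e$ (so dominated convergence and the ergodic theorem apply), which is exactly where Assumption \ref{sigma}(4), the regularity of $\hat b$ from Proposition \ref{wellposed-Poisson}, and the scaling Assumption \ref{scaling} are used; the delicate point in (iv) is confirming that the limit measure produced by the ergodic average coincides with $\Pi$ as defined, which is the Fubini identity above.
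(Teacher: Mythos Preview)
Your arguments for (i) and (iv) are correct and coincide with the paper's: apply Proposition~\ref{ergodic_thm} to the bounded periodic integrand $(I+\nabla\hat b)c$ for (i), and to $F(x)=\int_{\Ro d}f(\sigma(x,y))\,\nu^\alpha(dy)$ followed by Fubini for (iv).

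For (ii) and (iii), however, your Lipschitz estimate is off by a crucial factor of $\e$. Since $\hat b_\e(x)=\hat b(x/\e)$, the Lipschitz constant of $\hat b_\e$ is $\|\nabla\hat b\|_0/\e$, not $\|\nabla\hat b\|_0$. Hence
\[
|\Xi^\e(s,y)| \;\le\; \e\cdot\frac{\|\nabla\hat b\|_0}{\e}\,\bigl|\sigma_\e(X^{x,\e}_{s-},y)\bigr| \;=\; \|\nabla\hat b\|_0\,\bigl|\sigma(X^{x,\e}_{s-}/\e,\,y)\bigr| \;\le\; C|y|,
\]
with \emph{no} $\e$ in front. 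The support restriction $\{|y|\gtrsim 1/\e\}$ that you invoke therefore does not follow, and the deterministic rates $\e^{\alpha-1}$ or $\e^\alpha$ you claim cannot be obtained this way.

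The repair is simpler than what you wrote: use instead the uniform bound $|\Xi^\e(s,y)|\le 2\e\|\hat b\|_0$ coming from the boundedness of $\hat b$. For all sufficiently small $\e$ one then has $|\Xi^\e|<\rho$ (respectively $|\Xi^\e|<1$) identically, so $f(\Xi^\e)\equiv 0$ and $\ind_{B^c}(\Xi^\e)\equiv 0$; parts (iii) and (ii) are then trivially zero. The paper also uses the boundedness of $\hat b$ to dispose of (iii), but then handles (ii) by a less direct Cauchy--Schwarz splitting $|B_2^\e|\le\sqrt{J_1^\e}\sqrt{J_2^\e}$, with $J_2^\e=\int\!\!\int\ind_{B^c}(x)\,\nu_2^\e(dx,ds)\to 0$ from (iii) and $J_1^\e=\int\!\!\int|\Xi^\e|^2\,\nu^\alpha(dy)\,ds$ shown to be $O(\e^{2-\alpha})$; the boundedness argument above is shorter and yields the same conclusion.
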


\subsection{Homogenization of linear nonlocal PDEs}

Define
\begin{equation*}
  Y_t^\epsilon := \int_0^t \textstyle{ \left(\frac{1}{\epsilon^{\alpha-1}} e\left(\frac{X_s^{x,\epsilon}}{\epsilon}\right)+ g\left(\frac{X_s^{x,\epsilon}}{\epsilon}\right)\right) } ds.
\end{equation*}
Thanks to Proposition \ref{wellposed-parabolic}, the nonlocal PDE \eqref{ue} has a unique mild solution, which is given by the Feynman-Kac formula,
\begin{equation*}
  u^\epsilon(t,x)= \E\left[u_0(X_t^{x,\epsilon})\exp(Y_t^\epsilon)\right].
\end{equation*}

Similar to $\hat X^{x,\e}$, we define
\begin{equation*}
  \hat Y_t^\epsilon:=Y_t^\epsilon+\epsilon \textstyle{ \left(\hat e\left(\frac{Y_t^\epsilon}{\epsilon}\right)-\hat e\left(\frac{x}{\epsilon}\right)\right) }.
\end{equation*}
Here $\hat e\in\C_\mu^{\alpha+\beta}(\T^d)$, thanks to Proposition \ref{wellposed-Poisson} and Assumption \ref{center}, is the unique solution of the Poisson equation
\begin{equation*}
  \L^\alpha \hat e + e = 0,
\end{equation*}
with $\L^\alpha$ given by \eqref{L}. 
Again using It\^o's formula,
\begin{equation*}
  \begin{split}
    \hat Y_t^\epsilon =&\ \int_0^t(g+\nabla \hat ec) {\textstyle{ \left(\frac{X_{s}^{x,\epsilon}}{\epsilon}\right) }} ds - \int_0^t {\textstyle{ \frac{1}{\epsilon^{\alpha-1}} \A^{\bar\sigma,\nu^\alpha} \hat e \left(\frac{X_{s}^{x,\epsilon}}{\epsilon}\right) }} ds +\int_0^t \epsilon \A^{\sigma_\e,\nu^\alpha} \hat e_\e\left(X_{s}^{x,\e}\right) ds \\
        & +\int_0^t\int_{\Ro{d}}\epsilon\left[\hat e_\epsilon\left(X_{s-}^{x,\epsilon}+ \sigma_\epsilon \left(X_{s-}^{x,\epsilon},y\right) \right)-\hat e_\epsilon\left(X_{s-}^{x,\epsilon}\right) \right] \tilde N^\alpha(dy,ds) \\
        =:&\ \Lambda_1^\e(c,g)_t- \Lambda_2^\e(\hat e,\A^{\bar\sigma,\nu^\alpha})_t +\Lambda_3^\e(\hat e,\A^{\sigma_\e,\nu^\alpha})_t+ \Lambda_4^\e(\hat e, \tilde N^\alpha)_t.
  \end{split}
\end{equation*}

Then in the same way as the proof of Proposition \ref{HomoSDE}, we have the convergence of $Y^\e$ to a deterministic path in distribution, as well as in probability, since the limit is deterministic (see, e.g., \cite[Theorem 2.7.(iii)]{Van98}).
\begin{lemma}
  In the sense of weak convergence on the space $\D$, both $Y^\e$ and $\hat Y^\e$ converge in distribution, and hence in probability, to the deterministic path $y(t):=\bar Et$ as $\e\to0$, where the homogenized coefficient $\bar E$ is given by
  \begin{equation}\label{HomoCoef2}
    \bar E:=\int_{\T^d}(g+\nabla \hat ec)(x)\mu(dx).
  \end{equation}
\end{lemma}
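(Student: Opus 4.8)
The plan is to mimic exactly the structure used for the homogenization of the SDEs in Proposition \ref{HomoSDE}, applied to the scalar additive functional $Y^\e$ in place of the $\R^d$-valued process $X^{x,\e}$. Since $\hat e\in\C_\mu^{\alpha+\beta}(\T^d)$ is bounded, the perturbation $\e(\hat e(X_t^\e/\e)-\hat e(x/\e))$ is uniformly small, so it suffices to prove $\hat Y^\e\Rightarrow y$ on $\D$; the convergence $Y^\e\Rightarrow y$ then follows immediately since $\sup_{0\le s\le t}|Y^\e_s-\hat Y^\e_s|\le 2\e\|\hat e\|_0\to0$. The decomposition $\hat Y^\e_t=\Lambda_1^\e(c,g)_t+\Lambda_2^\e(\hat e,\tilde N^\alpha)_t$ already displayed splits $\hat Y^\e$ into a drift-type term and a purely discontinuous martingale term.

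First I would treat $\Lambda_1^\e(c,g)_t=\int_0^t(g+\nabla\hat e\,c)(X_s^{x,\e}/\e)\,ds$. Because $g+\nabla\hat e\,c$ is a bounded Borel function on $\T^d$ (here $g\in\C(\T^d)$ and $\nabla\hat e\in\C^{\alpha+\beta-1}(\T^d)$, $c\in\C^\beta(\T^d)$), Proposition \ref{ergodic_thm} applies and gives
\begin{equation*}
  \sup_{0\le s\le t}\left|\Lambda_1^\e(c,g)_s-\bar E s\right|\le\int_0^t\left|(g+\nabla\hat e\,c)\left(\frac{X_r^{x,\e}}{\e}\right)-\bar E\right|dr\to0
\end{equation*}
in probability as $\e\to0$, with $\bar E$ as in \eqref{HomoCoef2}. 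Second, for the martingale part $\Lambda_2^\e(\hat e,\tilde N^\alpha)$ I would compute its semimartingale characteristics relative to a truncation function as in the proof of Proposition \ref{HomoSDE}: writing $\Xi^\e(s,y)=\e[\hat e_\e(X_{s-}^{x,\e}+\sigma_\e(X_{s-}^{x,\e},y))-\hat e_\e(X_{s-}^{x,\e})]$, one has $C^\e\equiv0$ and the compensator measure $\nu^\e$ with density concentrated near the origin because $|\Xi^\e(s,y)|\le\|\hat e\|_1\,|\sigma(X_{s-}^{x,\e}/\e,\e y)|=\e\|\hat e\|_1\,|\sigma(X_{s-}^{x,\e}/\e,y)|$ by Assumption \ref{scaling}. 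The estimates in parts (ii)--(iii) of Lemma \ref{FCLT} (the $J_1^\e,J_2^\e$ bounds, using the growth condition in Assumption \ref{sigma} and Proposition \ref{ergodic_thm}) then show $\sup_{0\le s\le t}|B^\e(s)|\to0$ and $\int_0^t\!\int f(x)\nu^\e(dx,ds)\to0$ for bounded continuous $f$ vanishing near $0$; the truncated variance also vanishes. Hence by the functional central limit theorem for semimartingales (\cite[Theorem VIII.2.17]{JS13}), $\Lambda_2^\e(\hat e,\tilde N^\alpha)\Rightarrow0$ in $\D$.

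Combining the two pieces: $\Lambda_1^\e(c,g)\to\bar E\,\mathrm{Id}$ uniformly on compacts in probability and $\Lambda_2^\e(\hat e,\tilde N^\alpha)\Rightarrow0$, so by Slutsky-type arguments in $\D$ (the limit $\bar E\,\mathrm{Id}$ being deterministic and continuous, addition is continuous at such points) we get $\hat Y^\e\Rightarrow y$ with $y(t)=\bar Et$, and therefore $Y^\e\Rightarrow y$ as well. I expect the main obstacle to be purely bookkeeping: verifying that the characteristics computation and the small-jump estimates for the scalar functional $\Lambda_2^\e(\hat e,\tilde N^\alpha)$ go through verbatim from Lemma \ref{FCLT} — in particular that no $\Lambda_{3+4}$-type "genuine jump" contribution survives here, since $Y^\e$ only inherits jumps through the $\hat e$-increment term, all of which are asymptotically negligible. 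There is no new analytic difficulty beyond what was already handled for the SDE; the argument is strictly easier because there is no nontrivial limiting jump measure to identify.
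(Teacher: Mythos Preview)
Your approach is correct and is exactly what the paper does: it merely says ``in the same way as the proof of Proposition~\ref{HomoSDE}'', and you have spelled out that $\Lambda_1^\e(c,g)\to\bar E\cdot\mathrm{Id}$ via Proposition~\ref{ergodic_thm} while $\Lambda_2^\e(\hat e,\tilde N^\alpha)\Rightarrow0$ by the characteristics estimates of Lemma~\ref{FCLT}(ii)--(iii). One minor slip: your displayed bound $|\Xi^\e(s,y)|\le\e\|\hat e\|_1|\sigma(\cdot,y)|$ is off by a factor of $\e$ (since $\nabla\hat e_\e$ carries a $1/\e$ that cancels the prefactor), but the needed smallness follows more simply from $|\Xi^\e|\le 2\e\|\hat e\|_0$, which is the bound the paper actually uses in Lemma~\ref{FCLT}(iii).
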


%

Now we are in the position to prove the main result of this section. Since $\hat b$ and $\hat e$ are bounded on $\R^d$, $u^\e$ has the same limit behavior as
\begin{equation*}
  \hat u^\e(t,x):= \E[u_0(X_t^{x,\epsilon})\exp(\hat Y_t^\epsilon)]
\end{equation*}
when $\e\to 0$. Hence, we only need to show $\hat u^\epsilon(t,x)\to u(t,x), \e\to 0$ for any $t\ge 0,x\in\R^d$, where $u$ has the following  Feynman-Kac representation by virtue of Proposition \ref{wellposed-parabolic},
\begin{equation*}
  u(t,x)=\E[u_0(X^x_t)]e^{y(t)}.
\end{equation*}

\begin{proof}[\textbf{Proof of Theorem \ref{HomoPDE}}]
  For the convenience of notation, we shall write $\Lambda_1^\e(c,g)_t$, $\Lambda_2^\e(\hat e,\A^{\sigma,\nu^\alpha})_t$, $\Lambda_3^\e(\hat e,\A^{\sigma_\e,\nu^\alpha})_t$, $\Lambda_4^\e(\hat e, \tilde N^\alpha)_t$ as $\Lambda_1^\e(t)$, $\Lambda_2^\e(t)$, $\Lambda_3^\e(t)$, $\Lambda_4^\e(t)$, respectively. We fix a $t\in\R_+$.

  Firstly, we prove the uniform integrability of the set $\{e^{\Lambda_4^\e(t)}|0<\e\le1\}$ for each $t\in\R_+$. This follows by proving that it is uniformly bounded in $L^2(\Omega,\P)$. Denoting the integrand in $\Lambda_4^\e(\hat e, \tilde N^\alpha)$ by
  \begin{equation*}
    \Gamma^\e(s,y):=\epsilon\left[\hat e_\epsilon\left(X_{s-}^{x,\epsilon}+ \sigma_\epsilon \left(X_{s-}^{x,\epsilon},y\right) \right)-\hat e_\epsilon\left(X_{s-}^{x,\epsilon}\right) \right].
  \end{equation*}
  Then by It\^o's formula,
  \begin{equation*}
    \begin{split}
       e^{2\Lambda_4^\e(t)} =&\ 1-\int_0^t\int_{B^c} 2e^{2\Lambda_4^\e(s-)}\Gamma^\e(s,y) \nu^\alpha(dy)ds \\
         &\ +\int_0^t\int_{\Bo} e^{2\Lambda_4^\e(s-)}\left(e^{2\Gamma^\e(s,y)}-1\right)\tilde N^\alpha(dy,ds) \\
         &\ +\int_0^t\int_{B^c} e^{2\Lambda_4^\e(s-)} \left(e^{2\Gamma^\e(s,y)}-1\right) N^\alpha(dy,ds) \\
         &\ +\int_0^t\int_{\Bo} e^{2\Lambda_4^\e(s-)} \left[e^{2\Gamma^\e(s,y)}-1- 2\Gamma^\e(s,y)\right] \nu^\alpha(dy)ds.
    \end{split}
  \end{equation*}
  Since $\hat e$ is bounded, $\Gamma^\e$ has a uniform bound for all $\e>0$. Then there exists a large constant $C>0$ such that for each $\e>0$ and $t\in\R_+$,
  \begin{equation*}
    \E\int_0^t\int_{B^c}\left|e^{2\Lambda_4^\e(s-)} \left(e^{2\Gamma^\e(s,y)}-1-2\Gamma^\e(s,y)\right)\right|^2 \nu^\alpha(dy)ds \le C\nu^\alpha(B^c)\E\int_0^t e^{2\Lambda_4^\e(s-)} ds<\infty.
  \end{equation*}
  Hence combining these, there exists $\theta\in(0,1)$ such that
  \begin{equation*}
    \begin{split}
       \E e^{2\Lambda_4^\e(t)} & =1+\E\int_0^t\int_{\Bo} e^{2\Lambda_4^\e(s-)} \left[e^{2\Gamma^\e(s,y)}-1- 2\Gamma^\e(s,y)\right] \nu^\alpha(dy)ds \\
         &\le 1+\E\int_0^t e^{2\Lambda_4^\e(s-)} \int_{\Bo}2e^{2\theta\Gamma^\e(s,y)} |\Gamma^\e(s,y)|^2 \nu^\alpha(dy)ds \\
         &\le 1+C(\|\hat e\|_0)\E\int_0^t e^{2\Lambda_4^\e(s-)} \int_{\Bo} |\Gamma^\e(s,y)|^2 \nu^\alpha(dy)ds.
    \end{split}
  \end{equation*}
  As shown in the proof of part (iii) and (iv) in Lemma \ref{FCLT},
  \begin{equation*}
    \int_{\Bo} |\Gamma^\e(s,y)|^2 \nu^\alpha(dy) \le \e^2\|\hat e\|_1^2 \int_{\Bo} |\sigma_\e(X_{s-}^{x,\e},y)|^2 \nu^\alpha(dy) \le \frac{|\S^{d-1}|}{2-\alpha} \e^2\|\hat e\|_1^2 \|\psi\|_{L^\infty}^2.
  \end{equation*}
  Thus,
  \begin{equation*}
    \E e^{2\Lambda_4^\e(t)} \le 1+\e^2 C(\alpha,|\S^{d-1}|,\|\hat e\|_1,\|\psi\|_{L^\infty})\int_0^t\E e^{2\Lambda_4^\e(s-)}ds.
  \end{equation*}
  By Gr\"onwall's inequality, the uniform boundness of $\{e^{\Lambda_4^\e(t)}|0<\e\le1\}$ in $L^2(\Omega,\P)$ follows.

  Secondly, the proof of Lemma \ref{FCLT} shows that the set $\{\Lambda_3^\e(t)-\Lambda_2^\e(t)| 0<\e\le1\}$ is bounded.
  The set $\{\Lambda_1^\e(t)| 0<\e\le1\}$ is bounded by virtue of the boundness of $c,g$ and $\hat e$. Also since $u_0$ is periodic and continuous, $\{u_0(X_t^{x,\e})|0<\e\le1\}$ is bounded. Thus, the set $\{u_0(X_t^{x,\epsilon})\exp(\hat Y_t^\epsilon)|0<\e\le1\}$ is uniformly integrable.

  Finally, we pass to the limit. It is easy to see that $e^{\hat Y_t^\e}\to e^{y(t)}$ in probability as $\e\to0$, by the continuous mapping theorem (see, for instance, \cite[Theorem 2.3.(ii)]{Van98}).
  Then for any subsequence $\{\e_n\}\to0$, there exists a subsubsequence $\{\e_{n_k}\}\to0$ such that $e^{\hat Y_t^{\e_{n_k}}}\to e^{y(t)}$ almost uniformly (cf. \cite[Lemma 4.2]{Kal06}). That is, for any $\rho>0$, there exists a set $N\in\F$ with $\P(N)\le\rho$, such that
  \begin{equation}\label{a.un.}
    \left\|e^{\hat Y_t^{\e_{n_k}}}-e^{y(t)}\right\|_{L^\infty(N^c,\P)}\to 0, \quad k\to\infty.
  \end{equation}
  By the boundness of $u_0$, we know the set $\{u_0(X_t^{x,\epsilon})[\exp(\hat Y_t^\epsilon)-\exp(y(t))]|0<\e\le1\}$ is also uniformly integrable. Then for any $\delta>0$, there exist $\rho_0>0$ and $N_0\in\F$ with $\P(N_0)\le\rho_0$, such that
  \begin{equation}\label{uni_abs_cont}
    \E\left|u_0(X_t^{x,\e})\left(e^{\hat Y_t^\e}- e^{y(t)}\right)\ind_{N_0}\right|<\delta.
  \end{equation}
  Now along the sequence $\{\e_{n_k}\}$, we combining \eqref{a.un.} with \eqref{uni_abs_cont} to get
  \begin{equation*}
    \begin{split}
       \E\left|u_0(X_t^{x,\e_{n_k}})\left(e^{\hat Y_t^{\e_{n_k}}}- e^{y(t)}\right)\right| &\le \E\left|\cdots\ind_{N_0}\right| + \E\left|\cdots\ind_{N_0^c}\right| \\
         &\le \delta+\|u_0\|_{L^\infty}\P(N_0^c)\left\|e^{\hat Y_t^{\e_{n_k}}}-e^{y(t)}\right\|_{L^\infty(N^c,\P)} \\
         &\le 2\delta.
    \end{split}
  \end{equation*}
  To summarize these together, for any subsequence $\{\e_n\}\to0$, there exists a subsubsequence $\{\e_{n_k}\}\to0$ such that
  \begin{equation*}
    \E\left|u_0(X_t^{x,\e_{n_k}})\left(e^{\hat Y_t^{\e_{n_k}}}- e^{y(t)}\right)\right|\to 0, \quad k\to \infty,
  \end{equation*}
  which implies that the convergence holds on the whole line $0<\e\le1$. On the other hand, by Proposition \ref{HomoSDE}, we know that $\E[u_0(X_t^{x,\e})] \to \E[u_0(X_t^x)]$ as $\e\to0$. The result \eqref{u_conv} follows from
  \begin{equation*}
    \begin{split}
      |\hat u^\e(t,x) - u(t,x)| &= \left| \E\left[ u_0(X_t^{x,\e})e^{\hat Y_t^\e} \right] - \E[u_0(X^x_t)]e^{y(t)} \right| \\
      &\le \E\left|u_0(X_t^{x,\e})\left(e^{\hat Y_t^{\e}}- e^{y(t)}\right)\right| + \left| \E\left[ u_0(X_t^{x,\e}) \right] - \E\left[ u_0(X_t^x)\right] \right| e^{y(t)}.
    \end{split}
  \end{equation*}
\end{proof}

\begin{remark}
  We close this section by some comments for the proof of Theorem \ref{HomoPDE}. In \cite{Par99}, the author applied Girsanov's transform to get rid of the stochastic integral term involved in $\hat Y^\e$, since this term may not possess the uniformly integrability. While in our case, since the stochastic integral term in $Y_t^\e$ has an infinitesimal integrand $\Gamma^\e(s,y)$, the uniform integrability of $\{\exp(\hat Y_t^\e)|0<\e\le1\}$ is easier to treat.


\end{remark}

\appendix

\section{Proofs of auxiliary results}\label{app}

%

In this part, we provide missing proofs of some auxiliary results in our paper.

\begin{proof}[\textbf{Proof of Lemma \ref{MP}}]
  Note that the nonlocal operator $\A^{\bar\sigma,\nu^\alpha}$ can be rewritten as the form \eqref{A_rewrite}. For $u\in\C_b^{1+\gamma}(\R^d)$, we have
  \begin{equation}\label{difference-estimate-1}
    |u(x+z)-u(x)- z\cdot \nabla u(x)| \le |z| \int_0^1 |\nabla u(x+rz)-\nabla u(x)|dr \le \frac{[\nabla u]_\gamma}{1+\gamma}|z|^{1+\gamma}.
  \end{equation}
  Then by \eqref{Levy-kernel}, there exists a constant $C>0$ such that
  \begin{equation*}
    \begin{split}
       |\L^\alpha u(x)|\le &\ \int_{\Bo} |u(x+z)-u(x)- z\cdot \nabla u(x)| \nu^{\bar\sigma,\alpha}(x,dz) \\
         &\ + \int_{B^c} | u(x+z)-u(x)| \nu^{\bar\sigma,\alpha}(x,dz) + |b(x)\cdot\nabla u(x)| \\
         \le&\ 2\|u\|_{1+\gamma}\left( \int_{\Ro d} (|z|^{1+\gamma}\wedge1) \nu^{\bar\sigma,\alpha}(x,dz)+\|b\|_0 \right) \\
         \le&\ C\|u\|_{1+\gamma}.
    \end{split}
  \end{equation*}
  Based on this estimate, the rest of the proof is exactly the same as that of \cite[Proposition 3.2]{Pri12}, even though it is set up with $\bar\sigma(\cdot,y)\equiv y$ there.
\end{proof}

\begin{proof}[\textbf{Proof of Lemma \ref{Feller}}]
  Using \eqref{difference-estimate-1} with $\gamma=\alpha+\beta-1$, one can find that for $u\in\C^{\alpha+\beta}(\T^d)$,
  \begin{equation*}
    \big|u( x+\bar\sigma(x,y))-u(x)- \bar\sigma(x,y)\cdot \nabla u(x)\big| \le \textstyle{\frac{1}{\alpha+\beta}} [\nabla u]_{\alpha+\beta-1} |\bar\sigma(x,y)|^{\alpha+\beta}.
  \end{equation*}
  Combining this with \eqref{moment}, a straightforward application of the dominated convergence theorem yields that $\lim_{y\to x}\L^\alpha u(y)=\L^\alpha u(x)$ for any $u\in\C^{\alpha+\beta}(\T^d)$ and $x\in\T^d$. This amounts to saying that $\L^\alpha(\C^{\alpha+\beta}(\T^d))\subset\C(\T^d)$. Therefore, the operator
  \begin{equation*}
    \L^\alpha:\C(\T^d)\supset\C^{\alpha+\beta}(\T^d)\to\C(\T^d)
  \end{equation*}
  is a densely defined unbounded operator on $\C(\T^d)$.

  Now Lemma \ref{MP} implies that for any $\kappa>0$ and $u\in\C^{\alpha+\beta}(\T^d)$, $\|(\kappa-\L^\alpha)u\|_0\ge\kappa\|u\|_0$, that is, $\L^\alpha$ is dissipative. By Proposition \ref{res_prob}, we have $\C^{\beta}(\T^d)\subset (\kappa-\L^\alpha)(\C^{\alpha+\beta}(\T^d))$ for any $\kappa>0$, which yields that the operator $\kappa-\L^\alpha$ has dense range in $\C(\T^d)$. In addition, $\L^\alpha$ satisfies the positive maximum principle, due to the equivalent form \eqref{A_rewrite} of $\A^{\bar\sigma,\nu^\alpha}$ and Courr\`ege's theorem (see \cite[Corollary 4.5.14]{Jac01}). Now the final assertion follows form the celebrate Hille-Yosida-Ray Theorem (see, for instance, \cite[Theorem 4.2.2]{EK09}).
\end{proof}

\begin{proof}[\textbf{Proof of Lemma \ref{martingale-prob}}]
  The existence of solution of the martingale problem is in \cite[Proposition 3]{MP14}. Taking Lemma \ref{Feller} into account, the uniqueness and the Feller property follow from \cite[Theorem 4.4.1]{EK09}. The existence of transition density and the two estimates can be found in \cite[Theorem 1.5]{CZ18}. 
\end{proof}

\begin{proof}[\textbf{Proof of Lemma \ref{Phi}}]
  By the estimate in Corollary \ref{no-large-jump}, we have
  \begin{equation*}
    \kappa^{\frac{\alpha+\beta-1}{\alpha}} \|\nabla\hat b_\kappa \|_0 \le C \|b\|_\beta, \quad \kappa>\kappa_*.
  \end{equation*}
  Now by choosing $\kappa>\kappa_*\vee(2C \|b\|_\beta)^{\frac{\alpha}{\alpha+\beta-1}}$, we get that $\|\nabla\hat b_\kappa\|_0\le\frac{1}{2}$. Thus
  \begin{equation*}
    \frac{1}{2}|x_1-x_2|\le \big|\Phi_\kappa(x_1)-\Phi_\kappa(x_2)\big| \le\frac{3}{2}|x_1-x_2|,
  \end{equation*}
  i.e., $\Phi_\kappa$ is bi-Lipschitz. In particular, $\Phi_\kappa$ is a $\C^1$-diffeomorphism. Moreover,
  \begin{equation*}
    \nabla(\Phi_\kappa^{-1})= \text{Inv}\circ\nabla\Phi_\kappa\circ\Phi_\kappa^{-1},
  \end{equation*}
  where the matrix inverse map $\text{Inv}:\text{GL}(\R^d) \to\text{GL}(\R^d)$ is of class $\C^\infty$. Note that $\nabla\Phi_\kappa$ is of class $\C^{\alpha+\beta-1}$, $\Phi_\kappa^{-1}$ is of class $\C^1$. It is easy to see that $\nabla(\Phi_\kappa^{-1})$ is of class $\C^{\alpha+\beta-1}$. The second conclusion of the lemma follows. 
\end{proof}

\begin{proof}[\textbf{Proof of Corollary \ref{strong-Markov}}]
  By applying It\^o's formula, it is easy to see that for any $f\in D(\L^\alpha)=\C^{\alpha+\beta}(\T^d)$, the following process is a $(\Omega,\F,\P,\{\F_t\}_{t\ge0})$-martingale
  \begin{equation*}
    \tilde M^f(t):=f(\tilde X^x_t)-f(\tilde X^x_0)-\int_0^t \L^\alpha f(\tilde X^x_s) ds.
  \end{equation*}
  It is easy to see that $\tilde X^x$ has c\`adl\`ag paths almost surely. Let $\P_{\tilde X^x}:=\P\circ\tilde X^x$ be the pushforward probability measure of $\tilde X^x$ on $(\D,\B(\D))$, then $\P_{\tilde X^x}$ is a solution of martingale problem for $(\L^\alpha,\delta_x)$. By the uniqueness of solutions to the martingale problem obtained in Lemma \ref{martingale-prob}, we find that $\P_{\tilde X^x}=\P^x$, the Feller property follows. The strong Markov property follows from \cite[Theorem III.3.1]{RY13}.
\end{proof}

\begin{proof}[\textbf{Proof of Lemma \ref{restriction}}]
  Since $\mu$ is invariant with respect to $\{P_t\}_{t\ge0}$, for any $f\in\C_\mu(\T^d)$ and $t\ge0$, we have
  \begin{equation*}
    \int_{\T^d} P_t f(x) \mu(dx)=\int_{\T^d} f(x) \mu(dx)=0.
  \end{equation*}
  That is, $\C_\mu(\T^d)$ is $\{P_t\}_{t\ge0}$-invariant, in the sense that $P_t(\C_\mu(\T^d))\subset\C_\mu(\T^d)$ for all $t\ge0$. The lemma then follows from the corollary in \cite[Subsection II.2.3]{EN00}.
\end{proof}

\begin{proof}[\textbf{Proof of Lemma \ref{res_prob-center}}]
  Since $f$ is centered with respect to $\mu$, by Lemma \ref{inv} we have
  \begin{equation}\label{SG_est}
  \|P_t f\|_0 \le C\|f\|_0e^{-\rho t}.
  \end{equation}
  Note the fact that $\mu$ is invariant with respect to $\{P_t\}_{t\ge0}$. Then combining \eqref{SG_est} and the representation \eqref{rep_u_lambda}, a straightforward application of Fubini's theorem implies that
  \begin{equation*}
    \begin{split}
       \int_{\T^d} u_\kappa(x) \mu(dx) & = \int_{\T^d} \int_0^\infty e^{-\kappa t} P_t f(x) dt \mu(dx) = \int_0^\infty e^{-\kappa t} \left(\int_{\T^d} P_t f(x) \mu(dx)\right) dt \\
         & = \int_0^\infty e^{-\kappa t} \left(\int_{\T^d} f(x) \mu(dx)\right) dt =0.
    \end{split}
  \end{equation*}
  That is, $u_\kappa$ is also centered with respect to $\mu$.
\end{proof}

\begin{proof}[\textbf{Proof of Lemma \ref{FCLT}}]
  (i). By Proposition \ref{ergodic_thm}, the convergence in probability of the first integral is immediate,
  \begin{equation*}
    \sup_{0\le s\le t} \left| \Lambda_1^\e(c)_s- \bar Cs \right| \le \int_0^t \left|(I+\nabla \hat b) c \textstyle{ \left(\frac{X_{s}^{x,\epsilon}}{\epsilon}\right) } -\bar C\right|ds \to 0, \quad \e\to 0.
  \end{equation*}

  (ii). Note that $\nu^\alpha(\epsilon A)=\epsilon^{-\alpha}\nu^\alpha(A)$, $A\in\B(\Ro d)$. Then a change of variable yields
  \begin{equation*}
    \begin{split}
      &\ \A^{\sigma_\e,\nu^\alpha} \hat b_\e(x) \\
      =&\ \frac{1}{\e^{\alpha-1}} \int_{\Ro d} \textstyle{ \left[ \hat b\left( \DivEps x + \DivEps 1\sigma\left(\DivEps x, \e y\right) \right) - \hat b\left( \DivEps x \right) - \DivEps 1 \sigma^i\left(\DivEps x, \e y\right) \partial_i\hat b\left(\DivEps x\right) \ind_B(\e y) \right] } \nu^\alpha(dy).
    \end{split}
  \end{equation*}
  By the oddness condition in Assumption \ref{sigma},
  for any $\delta>0$, we have
  \begin{equation}\label{diff-A}
    \begin{split}
       &\ \left|\epsilon \A^{\sigma_\e,\nu^\alpha} \hat b_\e(x)- \textstyle{ \frac{1}{\epsilon^{\alpha-1}} } \A^{\bar\sigma,\nu^\alpha} \hat b \textstyle{ \left(\DivEps x\right) } \right| \\
         \le&\ \frac{1}{\e^{\alpha-1}} \int_{B_\delta\setminus\{0\}} \big| \big[ \hat b \textstyle{ \left( \DivEps x + \DivEps 1\sigma\left(\DivEps x, \e y\right) \right) } - \hat b \textstyle{ \left( \DivEps x + \bar\sigma\left(\DivEps x, y\right) \right) } \\
         &\qquad\qquad\ - \textstyle{ \left(\DivEps 1 \sigma^i\left(\DivEps x, \e y\right) - \bar\sigma^i\left(\DivEps x, y\right) \right) } \partial_i\hat b\left(\DivEps x\right) \big] \big| \nu^\alpha(dy) \\
         &\ +\frac{1}{\e^{\alpha-1}} \int_{B_\delta^c} \textstyle{ \left| \left[ \hat b\left( \DivEps x + \DivEps 1\sigma\left(\DivEps x, \e y\right) \right) - \hat b\left( \DivEps x + \bar\sigma\left(\DivEps x, y\right) \right)  \right]\right| } \nu^\alpha(dy) \\
         =:&\ I_1^\e(x) + I_2^\e(x),
    \end{split}
  \end{equation}
  where $B_\delta$ is the open ball in $\R^d$ centering at the origin with radius $\delta$, $I_1^\e$ and $I_2^\e$ are the two integral terms in the second equality. We let $\delta = \e^\gamma$ for some $\gamma\in\R$ that will be chosen latter. It follows from Lemma \ref{difference-estimate-2} that
  \begin{equation*}
    \begin{split}
       I_1^\e(x) & \le \frac{\|\hat b\|_{\alpha+\beta}}{\alpha+\beta} \frac{1}{\e^{\alpha-1}} \int_{B_\delta\setminus\{0\}} \textstyle{ \left| \DivEps 1 \sigma\left(\DivEps x, \e y\right) - \bar\sigma\left(\DivEps x, y\right) \right|}^{\alpha+\beta} \nu^\alpha(dy).
    \end{split}
  \end{equation*}
  Recall that $\sigma(x,0) = 0$ and $\bar\sigma(x,y) = \nabla_y\sigma(x,0)y$ as in \eqref{sigma-bar} and \eqref{sigma0-bar}. Then by \eqref{difference-estimate-1},
  \begin{equation*}
    \begin{split}
      \textstyle{ \left| \DivEps 1 \sigma\left(\DivEps x, \e y\right) - \bar\sigma\left(\DivEps x, y\right) \right| } &= \textstyle{ \DivEps 1 \left| \sigma\left(\DivEps x, \e y\right) - \sigma\left(\DivEps x, 0\right) - \e \nabla_y\sigma\left(\DivEps x,0\right) y \right| } \le \textstyle{ \frac{\e^\lambda [\nabla_y \sigma\left(\DivEps x,\cdot\right)]_\lambda}{1+\lambda} } |y|^{1+\lambda}.
    \end{split}
  \end{equation*}
  Hence, using the assumption \eqref{sigma-Holder}, we have
  \begin{equation}\label{I1}
    \begin{split}
      I_1^\e(x) &\le \frac{\|\hat b\|_{\alpha+\beta} \sup_{x\in\T^d}[\nabla_y \sigma\left(x, \cdot\right)]_\lambda^{\alpha+\beta} } {(\alpha+\beta)(1+\lambda)^{\alpha+\beta}} \e^{\lambda(\alpha+\beta)-\alpha+1} \int_{B_\delta\setminus\{0\}} |y|^{(1+\lambda)(\alpha+\beta)} \nu^\alpha(dy) \\
      &\le C\textstyle{ \left(d, \alpha, \beta, \lambda, \|\hat b\|_{\alpha+\beta}, \sup_{x\in\T^d}[\nabla_y \sigma\left(x, \cdot\right)]_\lambda \right) } \e^{\lambda(\alpha+\beta)-\alpha+1+ \gamma[(1+\lambda)(\alpha+\beta)-\alpha] }.
    \end{split}
  \end{equation}
  On the other hand, using a change of variable once again,
  \begin{equation}\label{I2}
    \begin{split}
      I_2^\e(x) &= \e \int_{B_{\e\delta}^c} \textstyle{ \left| \left[ \hat b\left( \DivEps x + \DivEps 1\sigma\left(\DivEps x, y\right) \right) - \hat b\left( \DivEps x + \bar\sigma\left(\DivEps x, \DivEps y\right) \right)  \right]\right| } \nu^\alpha(dy) \le 2 \|\hat b\|_0 \e \nu^\alpha(B_{\e\delta}^c) \\
      &\le C(d, \|\hat b\|_0) \e^{1-\alpha(1+\gamma)}.
    \end{split}
  \end{equation}
  Now we select $\gamma$ satisfying
  \begin{equation}\label{gamma}
    -\frac{\lambda(\alpha+\beta)-\alpha+1}{(1+\lambda)(\alpha+\beta)-\alpha} < \gamma < \frac{1}{\alpha} - 1,
  \end{equation}
  One can easily verify that the interval in \eqref{gamma} is not null, since $\lambda>\alpha-1$ as assumed in the regularity condition in \ref{sigma}. Combining \eqref{diff-A}-\eqref{gamma}, we conclude that
  \begin{equation*}
    \sup_{x\in\T^d} \left|\epsilon \A^{\sigma_\e,\nu^\alpha} \hat b_\e(x)- \textstyle{ \frac{1}{\epsilon^{\alpha-1}} } \A^{\bar\sigma,\nu^\alpha} \hat b \textstyle{ \left(\DivEps x\right) } \right| \to 0, \quad \text{as } \e\to0.
  \end{equation*}
  Therefore,
  \begin{equation*}
    \begin{split}
       \sup_{0\le s\le t} \left | \Lambda_3^\e(\hat b,\A^{\sigma_\e,\nu^\alpha})_s-\Lambda_2^\e(\hat b,\A^{\bar\sigma,\nu^\alpha})_s \right| \le&\ \int_0^t \textstyle{ \left|\epsilon \A^{\sigma_\e,\nu^\alpha} \hat b_\e\left(X_{s}^{x,\e}\right)- \frac{1}{\epsilon^{\alpha-1}} \A^{\bar\sigma,\nu^\alpha} \hat b \left(\DivEps{X_{s}^{x,\e}} \right)\right| } ds \\
         \le&\ t \sup_{x\in\T^d} \left|\epsilon \A^{\sigma_\e,\nu^\alpha} \hat b_\e(x)- \textstyle{ \frac{1}{\epsilon^{\alpha-1}} \A^{\bar\sigma,\nu^\alpha} \hat b \left(\DivEps x\right) } \right| \\
         \to&\ 0, \quad \e\to 0.
    \end{split}
  \end{equation*}

  (iii) and (iv). Since $\Xi^\e$ is integrable with respect to $\tilde N^\alpha$,
  the third characteristic of $\Lambda_2^\e$ satisfies that $\int_0^t\int_{\Ro d}(|x|^2\wedge1)\nu_2^\e(dx,ds)<\infty$ for each $\e>0$ and $t\in\R_+$ (cf. \cite[Proposition II.2.9]{JS13}). By the hypothesis, there exist $\rho>0$ and $M>0$ such that $|f|\le M$ on $B_\rho^c$ and $f=0$ on $B_\rho$. Then for any $t\in\R_+$,
  \begin{equation*}
    \int_0^t\int_{\Ro d}f(x)\nu_2^\e(dx,ds) \le M\int_0^t\int_{\Ro d}\ind_{B_\rho^c}(x) \nu_2^\e(dx,ds),
  \end{equation*}
  which goes to zero almost surely as $\e\to 0$ by the boundness of $\hat b$ and the dominated convergence theorem, and (iv) follows.

  For $B_2^\e$, we have the estimate
  \begin{equation*}
    \begin{split}
       \sup_{0\le s\le t} |B_2^\e(s)| & \le \left[\int_0^t\int_{\Ro d}|x|^2\nu_2^\e(dx,ds) \right]^{\frac{1}{2}} \left[\int_0^t\int_{\Ro d} \ind_{B^c}(x) \nu_2^\e(dx,ds)\right]^{\frac{1}{2}} \\
         & =: \sqrt{J^\e_1}\cdot \sqrt{J^\e_2}.
    \end{split}
  \end{equation*}
  By (iv) and a usual approximation procedure, $J^\e_2$ goes to zero surely as $\e\to 0$. For $J^\e_1$,
  \begin{equation*}
    \begin{split}
       J^\e_1 &= \int_0^t\int_{\Ro d} \left|\epsilon\left[\hat b_\epsilon\left(X_{s-}^{x,\epsilon}+ \sigma_\epsilon \left(X_{s-}^{x,\epsilon},y\right) \right)-\hat b_\epsilon\left(X_{s-}^{x,\epsilon}\right) \right] \right|^2 \nu^\alpha(dy) ds \\
         &= \int_0^t\left( \int_{B_\e^c}+\int_{B_\e\setminus\{0\}} \right) |\cdots|^2 \nu^\alpha(dy) ds \\
         &\le \frac{4t\|\hat b\|_0^2|\S^{d-1}|}{\alpha} \e^{2-\alpha} + \|\hat b\|_1^2\int_{B_\e\setminus\{0\}}\int_0^t \textstyle{ \left|\sigma\left( \DivEps{X_{s-}^{x,\e}},y \right)\right|}^2 ds\nu^\alpha(dy).
    \end{split}
  \end{equation*}
  By the growth condition in Assumption \ref{sigma},
  \begin{equation*}
    \int_{B_\e\setminus\{0\}}\int_0^t \textstyle{ \left|\sigma\left( \DivEps{X_{s-}^{x,\e}},y \right)\right|}^2 ds\nu^\alpha(dy)
    \le \frac{t|\S^{d-1}| \e^{2-\alpha}}{2-\alpha} \int_0^t \textstyle{ \left|\phi\left( \DivEps{X_{s-}^{x,\e}}\right)\right|}^2 ds.
  \end{equation*}
  Then (iii) follows from these estimates and Proposition \ref{ergodic_thm}.

  (v). It follows from Proposition \ref{ergodic_thm} that,
  \begin{equation*}
    \begin{split}
       \int_0^t\int_{\Ro d}f(y)\nu^{3+4}_\e(dy,ds) =&\ \int_{\Ro d}\int_0^t f\textstyle{ \left(\sigma\left(\DivEps{X^{x,\e}_{s-}},y\right)\right)} ds\nu^\alpha(dy) \\
         \to &\ t\int_{\Ro d}\int_{\T^d} f(\sigma(x,y))\mu(dx)\nu^\alpha(dy) \\
         =&\ t\int_{\Ro d}f(y)\bar\nu(dy),\quad \e\to 0,
    \end{split}
  \end{equation*}
  where the convergence is in probability.
\end{proof}

\section*{Acknowledgements}
We would like to thank the reviewers for their thoughtful comments and efforts towards improving our manuscript. The research of J. Duan was partly supported by the NSF grant 1620449. The research of Q. Huang was partly supported by China Scholarship Council (CSC), and NSFC grants 11531006 and 11771449. The research of R. Song is supported in part by a grant from the Simons Foundation ($\#$ 429343, Renming Song).

{\footnotesize

\begin{thebibliography}{47}
\providecommand{\natexlab}[1]{#1}
\providecommand{\url}[1]{\texttt{#1}}
\expandafter\ifx\csname urlstyle\endcsname\relax
  \providecommand{\doi}[1]{doi: #1}\else
  \providecommand{\doi}{doi: \begingroup \urlstyle{rm}\Url}\fi

\bibitem[Allaire(2002)]{All02}
G.~Allaire.
\newblock \emph{Shape optimization by the homogenization method}, vol. 146.
\newblock Springer-Verlag New York, 2002.

\bibitem[Applebaum(2009)]{App09}
D.~Applebaum.
\newblock \emph{L{\'e}vy processes and stochastic calculus}, vol. 116.
\newblock Cambridge University Press, 2009.

\bibitem[Arisawa(2009)]{Ari09}
M.~Arisawa.
\newblock Homogenization of a class of integro-differential equations with
  {L{\'e}vy} operators.
\newblock \emph{Commun. Partial Differ. Equ.}, 34\penalty0 (7):\penalty0
  617--624, 2009.

\bibitem[Bal and Jing(2016)]{BJ16}
G.~Bal and W.~Jing.
\newblock Fluctuations in the homogenization of semilinear equations with
  random potentials.
\newblock \emph{Commun. Partial Differ. Equ.}, 41\penalty0 (12):\penalty0
  1839--1859, 2016.

\bibitem[Barczy et~al.(2015)Barczy, Li, and Pap]{BLP15}
M.~Barczy, Z.~Li, and G.~Pap.
\newblock Yamada-watanabe results for stochastic differential equations with
  jumps.
\newblock \emph{Int. J. Stoch. Anal.}, 2015:\penalty0 1--23, 2015.

\bibitem[Bass(2009)]{Bas09}
R.F. Bass.
\newblock Regularity results for stable-like operators.
\newblock \emph{J. Funct. Anal.}, 8\penalty0 (257):\penalty0 2693--2722, 2009.

\bibitem[Bella et~al.(2017)Bella, Fehrman, Fischer, and Otto]{BFF17}
P.~Bella, B.~Fehrman, J.~Fischer, and F.~Otto.
\newblock Stochastic homogenization of linear elliptic equations: Higher-order
  error estimates in weak norms via second-order correctors.
\newblock \emph{SIAM J. Math. Anal.}, 49\penalty0 (6):\penalty0 4658--4703,
  2017.

\bibitem[Bench{\'e}rif-Madani and Pardoux(2007)]{BP07}
A.~Bench{\'e}rif-Madani and {\'E}.~Pardoux.
\newblock Homogenization of a semilinear parabolic {PDE} with locally periodic
  coefficients: a probabilistic approach.
\newblock \emph{ESAIM-Prob. Stat.}, 11:\penalty0 385--411, 2007.

\bibitem[Bensoussan et~al.(1978)Bensoussan, Lions, and Papanicolaou]{BLP78}
A.~Bensoussan, J.-L. Lions, and G.~Papanicolaou.
\newblock \emph{Asymptotic analysis for periodic structures}, vol.~5.
\newblock North-Holland Publishing Company Amsterdam, 1978.

\bibitem[Bessaih et~al.(2015)Bessaih, Hausenblas, and Razafimandimby]{BHR15}
H.~Bessaih, E.~Hausenblas, and P.~Razafimandimby.
\newblock Strong solutions to stochastic hydrodynamical systems with
  multiplicative noise of jump type.
\newblock \emph{NoDea-Nonlinear Differ. Equ. Appl.}, 22\penalty0 (6):\penalty0
  1661--1697, 2015.

\bibitem[Chen and Zhang(2018)]{CZ18}
Z.-Q. Chen and X.~Zhang.
\newblock Heat kernels for time-dependent non-symmetric stable-like operators.
\newblock \emph{J. Math. Anal. Appl.}, 465\penalty0 (1):\penalty0 1--21, 2018.

\bibitem[Cioranescu and Paulin(1999)]{CP99}
D.~Cioranescu and J.S.J. Paulin.
\newblock \emph{Homogenization of reticulated structures}, vol. 136.
\newblock Springer-Verlag New York, 1999.

\bibitem[Engel and Nagel(2000)]{EN00}
K.-J. Engel and R.~Nagel.
\newblock \emph{One-parameter semigroups for linear evolution equations}, vol.
  194.
\newblock Springer-Verlag New York, 2000.

\bibitem[Ethier and Kurtz(2009)]{EK09}
S.N. Ethier and T.G. Kurtz.
\newblock \emph{Markov processes: characterization and convergence}, vol. 282.
\newblock John Wiley \& Sons, 2009.

\bibitem[Franke(2006)]{Fra06}
B.~Franke.
\newblock The scaling limit behaviour of periodic stable-like processes.
\newblock \emph{Bernoulli}, 12\penalty0 (3):\penalty0 551--570, 2006.

\bibitem[Franke(2007)]{Fra07}
B.~Franke.
\newblock A functional non-central limit theorem for jump-diffusions with
  periodic coefficients driven by stable {L{\'e}vy}-noise.
\newblock \emph{J. Theor. Probab.}, 20\penalty0 (4):\penalty0 1087--1100, 2007.

\bibitem[Gilbarg and Trudinger(2001)]{GT01}
D.~Gilbarg and N.S. Trudinger.
\newblock \emph{Elliptic partial differential equations of second order}, vol.
  224.
\newblock Springer Science \& Business Media, 2001.

\bibitem[Hairer and Pardoux(2008)]{HP08}
M.~Hairer and {\'E}.~Pardoux.
\newblock Homogenization of periodic linear degenerate {PDEs}.
\newblock \emph{J. Funct. Anal.}, 255\penalty0 (9):\penalty0 2462--2487, 2008.

\bibitem[Horie et~al.(1977)Horie, Inuzuka, and Tanaka]{HIT77}
M.~Horie, T.~Inuzuka, and H.~Tanaka.
\newblock Homogenization of certain one-dimensional discontinuous markov
  processes.
\newblock \emph{Hiroshima Math. J.}, 7\penalty0 (2):\penalty0 629--641, 1977.

\bibitem[Ikeda and Watanabe(2014)]{IW14}
N.~Ikeda and S.~Watanabe.
\newblock \emph{Stochastic differential equations and diffusion processes},
  vol.~24.
\newblock Elsevier, 2014.

\bibitem[Jacob(2001)]{Jac01}
N.~Jacob.
\newblock \emph{Pseudo differential operators \& Markov processes: Fourier
  analysis and semigroups}, vol.~1.
\newblock Imperial College Press, 2001.

\bibitem[Jacod and Shiryaev(2013)]{JS13}
J.~Jacod and A.~Shiryaev.
\newblock \emph{Limit theorems for stochastic processes, 2ed}, vol. 288.
\newblock Springer-Verlag, 2013.

\bibitem[Kallenberg(2006)]{Kal06}
O.~Kallenberg.
\newblock \emph{Foundations of modern probability}.
\newblock Springer Science \& Business Media, 2006.

\bibitem[Kassmann et~al.(2019)Kassmann, Piatnitski, and Zhizhina]{KPZ19}
M.~Kassmann, A.~Piatnitski, and E.~Zhizhina.
\newblock Homogenization of l{\'e}vy-type operators with oscillating
  coefficients.
\newblock \emph{SIAM J. Math. Anal.}, 51\penalty0 (5):\penalty0 3641--3665,
  2019.

\bibitem[Krantz and Parks(2012)]{KP12}
S.G. Krantz and H.R. Parks.
\newblock \emph{The implicit function theorem: history, theory, and
  applications}.
\newblock Springer Science \& Business Media, 2012.

\bibitem[Kurtz(2011)]{Kur11}
T.G. Kurtz.
\newblock Equivalence of stochastic equations and martingale problems.
\newblock In \emph{Stochastic analysis 2010}, pp. 113--130. Springer, 2011.

\bibitem[Le~Bris et~al.(2019)Le~Bris, Legoll, and Madiot]{BLM19}
C.~Le~Bris, F.~Legoll, and F.~Madiot.
\newblock Multiscale finite element methods for advection-dominated problems in
  perforated domains.
\newblock \emph{Multiscale Model. Simul.}, 17\penalty0 (2):\penalty0 773--825,
  2019.

\bibitem[Masuda(2007)]{Mas07}
H.~Masuda.
\newblock Ergodicity and exponential $\beta$-mixing bounds for multidimensional
  diffusions with jumps.
\newblock \emph{Stoch. Process. Their Appl.}, 117\penalty0 (1):\penalty0
  35--56, 2007.

\bibitem[Mikulevi{\v{c}}ius and Pragarauskas(2014)]{MP14}
R.~Mikulevi{\v{c}}ius and H.~Pragarauskas.
\newblock On the cauchy problem for integro-differential operators in sobolev
  classes and the martingale problem.
\newblock \emph{J. Differ. Equ.}, 256\penalty0 (4):\penalty0 1581--1626, 2014.

\bibitem[Oleinik et~al.(1992)Oleinik, Shamaev, and Yosifian]{OSY92}
O.A Oleinik, A.S. Shamaev, and G.A. Yosifian.
\newblock \emph{Mathematical problems in elasticity and homogenization},
  vol.~26.
\newblock North-Holland, 1992.

\bibitem[Pardoux(1999)]{Par99}
{\'E}.~Pardoux.
\newblock Homogenization of linear and semilinear second order parabolic {PDEs}
  with periodic coefficients: a probabilistic approach.
\newblock \emph{J. Funct. Anal.}, 167\penalty0 (2):\penalty0 498--520, 1999.

\bibitem[Pardoux and Piatnitski(2012)]{PP12}
{\'E}.~Pardoux and A.~Piatnitski.
\newblock Homogenization of a singular random one-dimensional {PDE} with
  time-varying coefficients.
\newblock \emph{Ann. Probab.}, 40\penalty0 (3):\penalty0 1316--1356, 2012.

\bibitem[Pardoux and R{\v a}{\c{s}}canu(2014)]{PR14}
{\'E}.~Pardoux and A.~R{\v a}{\c{s}}canu.
\newblock \emph{Stochastic differential equations, backward {SDEs}, partial
  differential equations}, vol.~69.
\newblock Springer, 2014.

\bibitem[Pardoux and Veretennikov(2001)]{PV01}
{\'E}.~Pardoux and A.Y. Veretennikov.
\newblock On the poisson equation and diffusion approximation. {I}.
\newblock \emph{Ann. Probab.}, 29\penalty0 (3):\penalty0 1061--1085, 2001.

\bibitem[Piatnitski and Zhizhina(2017)]{PZ17}
A.~Piatnitski and E.~Zhizhina.
\newblock Periodic homogenization of nonlocal operators with a convolution-type
  kernel.
\newblock \emph{SIAM J. Math. Anal.}, 49\penalty0 (1):\penalty0 64--81, 2017.

\bibitem[Priola(2012)]{Pri12}
E.~Priola.
\newblock Pathwise uniqueness for singular {SDEs} driven by stable processes.
\newblock \emph{Osaka J. Math.}, 49\penalty0 (2):\penalty0 421--447, 2012.

\bibitem[Revuz and Yor(2013)]{RY13}
D.~Revuz and M.~Yor.
\newblock \emph{Continuous martingales and Brownian motion}, vol. 293.
\newblock Springer Science \& Business Media, 2013.

\bibitem[Rhodes and Sow(2011)]{RS09}
R.~Rhodes and B.A. Sow.
\newblock Critical homogenization of {L\'evy} process driven {SDEs} in random
  medium.
\newblock \emph{Stoch. Anal. Appl.}, 29\penalty0 (5):\penalty0 838--859, 2011.

\bibitem[Sakamoto and Yoshida(2004)]{SY04}
Y.~Sakamoto and N.~Yoshida.
\newblock Asymptotic expansion formulas for functionals of {$\epsilon$-Markov}
  processes with a mixing property.
\newblock \emph{Ann. Inst. Stat. Math.}, 56\penalty0 (3):\penalty0 545--597,
  2004.

\bibitem[Sandri{\'c}(2016)]{San16}
N.~Sandri{\'c}.
\newblock Homogenization of periodic diffusion with small jumps.
\newblock \emph{J. Math. Anal. Appl.}, 435\penalty0 (1):\penalty0 551--577,
  2016.

\bibitem[Sch{\"a}ffer(1970)]{Sch70}
J.J. Sch{\"a}ffer.
\newblock Norms and determinants of linear mappings.
\newblock \emph{Math. Z.}, 118\penalty0 (4):\penalty0 331--339, 1970.

\bibitem[Schwab(2010)]{Sch10}
R.W. Schwab.
\newblock Periodic homogenization for nonlinear integro-differential equations.
\newblock \emph{SIAM J. Math. Anal.}, 42\penalty0 (6):\penalty0 2652--2680,
  2010.

\bibitem[Tomisaki(1992)]{Tom92}
M.~Tomisaki.
\newblock Homogenization of cadlag processes.
\newblock \emph{J. Math. Soc. Jpn.}, 44\penalty0 (2):\penalty0 281--305, 1992.

\bibitem[Van~der Vaart(1998)]{Van98}
A.W. Van~der Vaart.
\newblock \emph{Asymptotic statistics}, vol.~3.
\newblock Cambridge University Press, 1998.

\bibitem[Xie and Zhang(2014)]{XZ14}
L.~Xie and X.~Zhang.
\newblock Heat kernel estimates for critical fractional diffusion operator.
\newblock \emph{Stud. Math.}, 224\penalty0 (3):\penalty0 221--263, 2014.

\bibitem[Yosida(1995)]{Yos95}
K.~Yosida.
\newblock \emph{Functional analysis. Reprint of the sixth (1980) edition.
  Classics in Mathematics}, vol.~11.
\newblock Springer-Verlag, Berlin, 1995.

\bibitem[Zvonkin(1974)]{Zvo74}
A.K. Zvonkin.
\newblock A transformation of the phase space of a diffusion process that
  removes the drift.
\newblock \emph{Math. USSR Sb.}, 22\penalty0 (1):\penalty0 129, 1974.

\end{thebibliography}

}
\end{document}